\setlist[enumerate,1]{label=\roman*)}  
\theoremstyle{plain}
  \newtheorem{thm}{\protect\theoremname}
  \theoremstyle{remark}
  \newtheorem{rem}[thm]{\protect\remarkname}
  \theoremstyle{definition}
  \newtheorem{defn}[thm]{\protect\definitionname}
  \theoremstyle{plain}
  \newtheorem{prop}[thm]{\protect\propositionname}
  \theoremstyle{definition}
  \theoremstyle{plain}
  \newtheorem{cor}[thm]{\protect\corollaryname}
  \theoremstyle{plain}
  \newtheorem{lem}[thm]{\protect\lemmaname}
\newcommand{\AVaR}{{\sf AV@R}}
\DeclareMathOperator{\1}{{\mathds 1}} 		
\newcommand{\sign}{\mbox{sign}\,} 
  \providecommand{\corollaryname}{Corollary}
  \providecommand{\definitionname}{Definition}
  \providecommand{\examplename}{Example}
  \providecommand{\lemmaname}{Lemma}
  \providecommand{\propositionname}{Proposition}
  \providecommand{\remarkname}{Remark}
\providecommand{\theoremname}{Theorem}
\begin{document}

\title{On Banach spaces of vector-valued random variables and their duals motivated by risk measures}

\author{Thomas Kalmes \and Alois Pichler\footnote{Faculty of Mathematics, Chemnitz University of Technology, Germany (both authors) thomas.kalmes@mathematik.tu-chemnitz.de; alois.pichler@mathematik.tu-chemnitz.de}}

\maketitle

\begin{abstract}
We introduce Banach spaces of vector-valued random variables motivated from mathematical finance. So-called risk functionals are defined in a natural way on these Banach spaces and it is shown that these functionals are Lipschitz continuous. The risk functionals cannot be defined on strictly larger spaces of random variables which creates a particular interest for the spaces presented. We elaborate key properties of these Banach spaces and give representations of their dual spaces in terms of vector measures with values in the dual space of the state space.

\textbf{Keywords:} Vector-valued random variables, Banach spaces of random variables, rearrangement invariant spaces, dual representation, risk measures, stochastic dominance

\textbf{2010 Mathematics Subject Classification:} Primary: 46E30, 46E40. Secondary: 62P05
\end{abstract}

\section{Introduction}

This paper introduces Banach spaces for vector-valued random variables
in a first part. These spaces extend rearrangement spaces for functions
in two ways. First, random variables are considered on a probability
space and second, we extend them to vector-valued (i.e., $\mathbb{R}^{d}$,
or more general Banach space-valued) random variables. 

It is natural to address 
differences/\,similarities between $L^{1}$ and $L^{p}$ spaces and we elaborate on extensions in the second part of the paper. We fully describe the duals of the new spaces. 
The duality theory for these spaces differs essentially from $L^p$ spaces. The new
spaces are larger than $L^\infty$, but not an $L^{p}$ space in general
and further, their dual is not even similar to $L^{p}$ spaces. However,
they are reflexive. The duality theory is particularly nice in case
that the dual of the state space enjoys the Radon\textendash Nikodým property.

\medskip{}

An important motivation for considering these spaces derives from
recent developments in mathematical finance. Vector-valued functions
or portfolio vectors are naturally present in many real life situations.
An example is given by considering a portfolio with investments in
$d$, say, different currencies. The random outcome is in $\mathbb{R}^{d}$
in this motivating example, the related random variable is said to be vector-valued. Here, we consider more generally Banach space-valued random
variables. The spaces can be associated with risk functionals and
we demonstrate that the spaces introduced are as large as possible
such that the associated risk functionals remain continuous.



\medskip{}

\citet{Rueschendorf2006} introduces and considers vector-valued risk functionals first.  \citet{Svindland2009,Filipovic2012,SvindlandKupper2011} and many
further authors consider and discuss different domain spaces for risk
measures on portfolio vectors, for example Orlicz spaces (as done
in\ \citet{Cheridito2009a} and \citet{Bellini2012}). \citet{Ekeland2011}
consider the domain space $L^{\infty}$ for these risk measures. 
\citet{EkelandGalichonHenry} provide the first multivariate generalization of a Kusuoka representation for risk measures on vector-valued random variables on $L^{2}$. 
In contrast, the present paper extends these spaces and presents
the largest possible Banach spaces for which those functionals remain
continuous. The resulting spaces are neither Orlicz nor Lebesgue spaces,
as considered in the earlier literature.

The spaces, which we consider, are in a way related to function
spaces (rearrangement spaces) introduced by \citet{LorentzSpaces,Lorentz},
following earlier results obtained by \citet{Halperin}. 
For unexplained notions from the theory of vector measures we would like to refer the reader to the book by \citet{Diestel1977}.

\paragraph{Outline of the paper.}

The following section (Section~\ref{sec:Relation}) provides the
mathematical setting including the relation to mathematical finance. The Banach spaces $L_{\sigma}^{p}(P,X)$ of $X$-valued random variables, introduced in Section~\ref{sec:VectorValued}, constitute the natural domains of risk functionals. We demonstrate that risk functionals are continuous with respect to the norm of the space introduced. In Section~\ref{sec:ScalarDual} we give a representation of the dual spaces of these Banach spaces in the scalar-valued case. This representation
is used in Section~\ref{sec:VectorDual} to derive representations of the duals in the general vector-valued case.

\section{\label{sec:Relation}Mathematical setting and motivation }

We consider a probability space $\left(\Omega,\mathcal{F},P\right)$
and denote the \emph{distribution function} of an $\mathbb{R}$\nobreakdash-valued
random variable $Y$ by 
\[
F_{Y}(q):=P\left(Y\le q\right)=P\left(\left\{ \omega:\,Y(\omega)\le q\right\} \right).
\]
The \emph{generalized inverse} is the nondecreasing and left-continuous
function 
\[
F_{Y}^{-1}(\alpha):=\inf\left\{ q:\,P\left(Y\le q\right)\ge\alpha\right\} ,
\]
also called the \emph{quantile} or \emph{Value-at-Risk}.

With $X=(X,\Vert\cdot\Vert)$ we denote a Banach space and by $X^*$ its continuous dual space. We use the notation $\langle\varphi,x\rangle$ for $\varphi(x)$, $\varphi\in X^*$ and $x\in X$. As usual we denote for $p\in [1,\infty)$ by $L^p(P,X)$ the Bochner-Lebesgue space of $p$-Bochner integrable $X$-valued random variables $Y$ on $(\Omega,\mathcal{F},P)$ whose norm we denote by $\Vert\cdot\Vert_p$. Recall that for $Y\in L^p(P,X)$
\begin{equation}
\left\Vert Y\right\Vert _{p}=\left(\int_{0}^{1}F_{\left\Vert Y\right\Vert }^{-1}(u)^{p}\mathrm{d}u\right)^{1/p}=\left(\int_{0}^{\infty}p\,t^{p-1}\left(1-F_{\left\Vert Y\right\Vert }(t)\right)\mathrm{d}t\right)^{1/p}.\label{eq:Lp}
\end{equation}

\medskip{}

In this paper Banach spaces of vector-valued, strongly
measurable random variables are introduced by weighting the quantiles in a different
way than~\eqref{eq:Lp}. The present results extend and generalize
characterizations obtained in \citet{Pichler2013a}, where only real
valued random variables and $p=1$ are considered (and elaborated
in a context of insurance).
\begin{rem}\label{rem:Extension}
	We shall assume throughout the paper that the
	probability space $(\Omega,\mathcal{F},P)$ is rich enough to carry a $[0,1]$-valued, uniform distribution.\footnote{$U$ is uniform, if $P(U\le u)=u$ for all $u\in[0,1]$.}
	If this is not the case, then one may replace $\Omega$ by $\tilde{\Omega}:=\Omega\times[0,1]$
	with the product measure $\tilde{P}(A\times B):=P(A)\cdot\text{Lebesgue measure}(B)$.
	Every random variable $Y$ on $\Omega$ extends to $\tilde{\Omega}$
	by $\tilde{Y}(\omega,u):=Y(\omega)$ and $U(\omega,u):=u$ is a uniform random variable, as $\tilde{P}\left(U\le u\right)=\tilde{P}\left(\Omega\times[0,u]\right)=u$. We denote the set of $[0,1]$-valued uniform random variables on $(\Omega,\mathcal{F},P)$ by $\mathscr{U}(0,1)$.
	
	With an $\mathbb{R}$\nobreakdash-valued random variable $Y$ one
	may further associate its \emph{generalized quantile} transform 
	\[
	F(y,u):=(1-u)\cdot\lim_{y^{\prime}\uparrow y}F_{Y}(y^{\prime})+u\cdot F_{Y}(y).
	\]
	The random variable $F(Y,U)$ is uniformly distributed again
	and $F(Y,U)$ is coupled in a comonotone way with $Y$, i.e.,  
	the inequality  $\big(F(Y,U)(\omega)-F(Y,U)(\omega^\prime)\big)\big(Y(\omega)-Y(\omega^\prime)\big)\geq 0$ holds $P\otimes P$ almost everywhere (see, e.g., \citet[Proposition 1.3]{PflugRomisch2007}).
\end{rem}

\subsection*{Relation to mathematical finance: risk measures and their continuity
	properties.}

Risk measures on $\mathbb{R}$\nobreakdash-valued random variables
have been introduced in the pioneering paper~\citet{Artzner1999}.
An $\mathbb{R}$\nobreakdash-valued random variable is typically associated
with the total, or accumulated return of a portfolio in mathematical
finance. (The prevalent interpretation in insurance is the size of
a claim, which happens with a probability specified by the probability
measure $P$.)

The aggregated portfolio is composed of individual components, as
stocks. From the perspective of comprehensive risk management it is
desirable to understand not only the risk of the accumulated portfolio,
but also its components. These more general risk measures on $\mathbb{R}^{d}$\nobreakdash-valued
random variables have been considered first in \citet{BurgertRueschendorf}
and further progress was made, for example, by \citet{Rueschendorf2006},
\citet{EkelandGalichonHenry} and \citet{Ekeland2011}.

\citet[Theorem 1.7]{Ekeland2011} obtain a Kusuoka representation (cf.\ \citet{Kusuoka}) for risk measures
based on $\mathbb{R}^{d}$\nobreakdash-valued random variables. 
The risk functional
identified there in the ``regular case'' for the homogeneous risk
functional on random vectors is 
\begin{equation}
\rho_{Z}(Y):=\sup\left\{ \mathbb{E}\left\langle Z,Y^{\prime}\right\rangle \colon Y^{\prime}\sim Y\right\} ,\label{eq:3}
\end{equation}
where $Y\sim Y^{\prime}$ indicates that $Y$ and $Y^{\prime}$ enjoy
the same law in $\mathbb{R}^{d}$.\footnote{That is, $P\left(Y_{1}\le y_{1},\dots Y_{d},\le y_{d}\right)=P\left(Y_{1}^{\prime}\le y_{1},\dots ,Y_{d}^{\prime}\le y_{d}\right)$
	for all $(y_{1},\dots y_{d})\in\mathbb{R}^{d}$.} $\rho_{Z}$ is called the \emph{maximal correlation risk measure
	in direction $Z$}.


The rearrangement inequality (see e.g.\ \citet[Theorem 5.25(2)]{McNeil2005}, also known as Chebyshev's sum inequality, cf.\ \citet[Section~2.17]{Hardy1988}) provides an upper bound for the natural linear form in~\eqref{eq:3} by 
\begin{equation}
\left|\mathbb{E}\left\langle Z,Y\right\rangle \right|\le\mathbb{E}\left\Vert Z\right\Vert ^{*}\cdot\left\Vert Y\right\Vert \le\mathbb{E}\,K\cdot\left\Vert Z\right\Vert _{\ell_{1}^{d}}\cdot\left\Vert Y\right\Vert \le K\cdot\int_{0}^{1}F_{\left\Vert Z\right\Vert _{\ell_{1}^{d}}}^{-1}(u)\cdot F_{\left\Vert Y\right\Vert }^{-1}(u)\mathrm{d}u,\label{eq:5}
\end{equation}
where the norms $\left\Vert \cdot\right\Vert $ and $\left\Vert \cdot\right\Vert ^{*}$
are dual to each other on $\mathbb{R}^{d}$ (here, $K>0$ is the constant
linking the norms by $\left\Vert \cdot\right\Vert ^{*}\le K\cdot\left\Vert \cdot\right\Vert _{\ell_{1}^{d}}$
on (the dual of) $\mathbb{R}^{d}$). 

\medskip{}

The maximal correlation risk measure~\eqref{eq:3} employs the linear
form $\mathbb{E}\left\langle Z,Y\right\rangle $, which satisfies
the bounds~\eqref{eq:5}. This motivates fixing the function 
\begin{equation}
\sigma(\cdot):=F_{\left\Vert Z\right\Vert _{\ell_{1}^{d}}}^{-1}(\cdot)\label{eq:5-2}
\end{equation}
and to consider an appropriate vector space of random variables endowed with 
\[
\left\Vert Y\right\Vert _{\sigma}:=\int_{0}^{1}\sigma(u)\cdot F_{\left\Vert Y\right\Vert }^{-1}(u)\mathrm{d}u,
\]
(cf.~\citet{Pichler2017, PichlerAmbiguity, AhmadiPichler}).
It turns out that $\left\Vert \cdot\right\Vert _{\sigma}$ is a norm
(Theorem~\ref{Banach space} below) on this vector space of random variables and that the maximal correlation risk measure is continuous with respect to the norm (Proposition~\ref{continuity of risk measure}).

\section{\label{sec:VectorValued}The vector-valued Banach spaces $L_{\sigma}^{p}(P,X)$}

Motivated by the observations made in the previous section we introduce the following notions.

\begin{defn}\label{def:Distortion}A nondecreasing, nonnegative function $\sigma\colon\left[0,1\right)\to\left[0,\infty\right)$,
	which is continuous from the left and normalized by $\int_{0}^{1}\sigma(u)\mathrm{d}u=1$, is called
	a \emph{distortion function} (in the literature occasionally also\emph{
		spectrum function}, cf.\ \citet{Acerbi2002a}). 
\end{defn}

\begin{defn}
	For a distortion function $\sigma$, a Banach space $(X,\left\Vert \cdot\right\Vert)$ and a probability space $(\Omega,\mathcal{F},P)$ we define for $p\in[1,\infty)$ and a strongly measurable $X$-valued random variable $Y$ on $(\Omega,\mathcal{F},P)$
	\begin{equation*}
	\left\Vert Y\right\Vert_{\sigma,p}^p:=\sup_{U\text{ uniform}}\mathbb{E}\,\sigma(U)\left\Vert Y\right\Vert ^{p}=\sup_{U\text{ uniform}}\int_\Omega \sigma(U(\omega))\Vert Y(\omega)\Vert^p dP(\omega),
	\end{equation*}
	where the supremum is taken over all $U\in\mathscr{U}(0,1)$, i.e., over all $[0,1]$-valued, uniformly distributed random variables $U$ on $(\Omega,\mathcal{F},P)$. Moreover, we set
	\begin{equation*}
	L_\sigma^p(P,X):=\{Y:\Omega\rightarrow X\text{ strongly measurable and }\Vert Y\Vert^p_{\sigma,p}<\infty\},
	\end{equation*}
	where as usual we identify $X$-valued random variables which coincide $P$-almost everywhere.
\end{defn}

Obviously, for $\sigma=1$ one obtains the classical Bochner-Lebesgue spaces $L^p(P,X)$ which are well-known to be Banach spaces.

\begin{thm}\label{Banach space}
	$L_\sigma^p(P,X)$ is a vector space and $\Vert\cdot\Vert_{p,\sigma}$ is a norm on $L_\sigma^p(P,X)$ turning it into a Banach space which embeds contractively into $L^p(P,X)$.
	
	Moreover, for each $X$-valued, strongly measurable $Y$ on $(\Omega,\mathcal{F},P)$ and every $U\in\mathscr{U}(0,1)$ which is coupled in comonotone way with $\Vert Y\Vert$ it follows that
	\begin{equation}\label{alternative expression norm}
	\Vert Y\Vert_{\sigma,p}^p=\mathbb{E}\,(\sigma(U)\Vert Y\Vert^p)=\int_0^1 \sigma(u) F_{\Vert Y\Vert}^{-1}(u)^pdu.
	\end{equation}
\end{thm}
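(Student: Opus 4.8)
The plan is to organize the argument around three tasks: showing $L^p_\sigma(P,X)$ is a vector space on which $\Vert\cdot\Vert_{\sigma,p}$ is a norm, establishing the comonotone formula \eqref{alternative expression norm}, and proving completeness together with the contractive embedding into $L^p(P,X)$. I would actually prove \eqref{alternative expression norm} first, since the supremum in the definition is awkward to handle directly and the closed-form expression makes everything else transparent. For a fixed strongly measurable $Y$, the quantity $\Vert Y\Vert$ is a real-valued random variable, and for any $U\in\mathscr U(0,1)$ the pair $(U,\Vert Y\Vert^p)$ has marginals the uniform law on $[0,1]$ and the law of $\Vert Y\Vert^p$; by the Hardy–Littlewood rearrangement inequality (Chebyshev's sum inequality in its integral form, already invoked in the excerpt around \eqref{eq:5}), $\mathbb E\,\sigma(U)\Vert Y\Vert^p \le \int_0^1 \sigma(u)F_{\Vert Y\Vert}^{-1}(u)^p\,\mathrm du = \int_0^1\sigma(u)F_{\Vert Y\Vert^p}^{-1}(u)\,\mathrm du$, using that $\sigma$ is nondecreasing and that $F_{\Vert Y\Vert^p}^{-1}=(F_{\Vert Y\Vert}^{-1})^p$. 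The reverse inequality — hence the supremum being attained — is exactly the existence of a comonotone coupling: by Remark~\ref{rem:Extension} (the generalized quantile transform $F(\Vert Y\Vert,V)$ for $V$ uniform, independent of everything, available because the space carries a uniform variable), there is $U_0\in\mathscr U(0,1)$ coupled comonotonically with $\Vert Y\Vert$, and for such a $U_0$ one has equality in the rearrangement inequality. This simultaneously proves \eqref{alternative expression norm} and shows the sup is a max.

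With \eqref{alternative expression norm} in hand, the vector space and norm properties follow from properties of the functional $\nu(Y):=\left(\int_0^1\sigma(u)F_{\Vert Y\Vert}^{-1}(u)^p\,\mathrm du\right)^{1/p}$. Absolute homogeneity is immediate since $F_{\Vert \lambda Y\Vert}^{-1}=|\lambda|F_{\Vert Y\Vert}^{-1}$. Definiteness: if $\nu(Y)=0$ then, since $\sigma\ge 0$ with $\int_0^1\sigma=1$ so $\sigma>0$ on a set of positive measure (in fact on $[\epsilon,1)$ for some $\epsilon<1$ by monotonicity), $F_{\Vert Y\Vert}^{-1}=0$ a.e., forcing $\Vert Y\Vert=0$ a.e. The triangle inequality is the one genuine subtlety: $\Vert Y+Y'\Vert \le \Vert Y\Vert+\Vert Y'\Vert$ pointwise, but quantiles are not sublinear in general, so I would not pass through quantiles here; instead I use the original $\sup$-definition, where for any fixed $U$, $(\mathbb E\,\sigma(U)\Vert Y+Y'\Vert^p)^{1/p}\le (\mathbb E\,\sigma(U)\Vert Y\Vert^p)^{1/p}+(\mathbb E\,\sigma(U)\Vert Y'\Vert^p)^{1/p}$ is just Minkowski's inequality in $L^p$ with respect to the measure $\sigma(U)\,\mathrm dP$; taking suprema and using that $\sup$ of a sum is at most the sum of suprema gives $\Vert Y+Y'\Vert_{\sigma,p}\le\Vert Y\Vert_{\sigma,p}+\Vert Y'\Vert_{\sigma,p}$. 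This also shows $L^p_\sigma(P,X)$ is closed under addition, and closedness under scalars is clear, so it is a vector space. I expect the triangle inequality — specifically the decision to argue via the $\sup$-form rather than the closed form — to be the main conceptual point; everything else is bookkeeping.

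For the contractive embedding, note $\sigma$ nondecreasing with mean $1$ forces $\sigma(u)\ge$ nothing uniformly, but $\int_0^1\sigma(u)F_{\Vert Y\Vert}^{-1}(u)^p\,\mathrm du \ge \int_0^1 F_{\Vert Y\Vert}^{-1}(u)^p\,\mathrm du = \Vert Y\Vert_p^p$ fails in that direction; rather, taking $U$ uniform \emph{independent} of $Y$ gives $\mathbb E\,\sigma(U)\Vert Y\Vert^p = \mathbb E\,\sigma(U)\cdot\mathbb E\,\Vert Y\Vert^p = \Vert Y\Vert_p^p$ since $\mathbb E\,\sigma(U)=\int_0^1\sigma=1$, and this is $\le\Vert Y\Vert_{\sigma,p}^p$ by definition of the supremum — so $\Vert Y\Vert_p\le\Vert Y\Vert_{\sigma,p}$, giving $L^p_\sigma(P,X)\hookrightarrow L^p(P,X)$ contractively (after noting strong measurability and finiteness of $\Vert\cdot\Vert_p$ are inherited). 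Finally, completeness: given a Cauchy sequence $(Y_n)$ in $\Vert\cdot\Vert_{\sigma,p}$, the contractive embedding makes it Cauchy in $L^p(P,X)$, so $Y_n\to Y$ in $L^p$ for some strongly measurable $Y$; pass to a subsequence with $Y_{n_k}\to Y$ $P$-a.e., whence $\Vert Y_{n_k}-Y_{n_j}\Vert\to\Vert Y_{n_k}-Y\Vert$ a.e. as $j\to\infty$. For any fixed uniform $U$, Fatou's lemma applied to $\sigma(U)\Vert Y_{n_k}-Y_{n_j}\Vert^p$ (letting $j\to\infty$ along the a.e.-convergent subsequence) gives $\mathbb E\,\sigma(U)\Vert Y_{n_k}-Y\Vert^p\le\liminf_j\mathbb E\,\sigma(U)\Vert Y_{n_k}-Y_{n_j}\Vert^p\le\liminf_j\Vert Y_{n_k}-Y_{n_j}\Vert_{\sigma,p}^p$; taking the supremum over $U$ and then $k\to\infty$ along the Cauchy estimate shows $\Vert Y_{n_k}-Y\Vert_{\sigma,p}\to 0$, and in particular $Y\in L^p_\sigma(P,X)$ with $Y_n\to Y$ in that norm. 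The exchange of supremum over $U$ with the $\liminf$ needs only the one-sided inequality $\sup_U\liminf_j(\cdots)\le\liminf_j\sup_U(\cdots)$, which always holds, so no extra care is required there.
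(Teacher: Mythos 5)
Most of your proposal coincides with the paper's own proof: you establish \eqref{alternative expression norm} exactly as the paper does (rearrangement inequality for the upper bound, a comonotone coupling for equality), you get the triangle inequality by Minkowski's inequality in each $L^p(\sigma(U)P)$ followed by taking suprema, and your completeness argument (Cauchy in $L^p$, a.e.\ convergent subsequence, Fatou for each fixed $U$) is the paper's verbatim. Your definiteness argument via $\sigma>0$ near $1$ and monotonicity of $F_{\Vert Y\Vert}^{-1}$ is a valid minor variant.

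The one genuine problem is the contractive embedding. The inequality you dismiss, $\int_0^1\sigma(u)F_{\Vert Y\Vert}^{-1}(u)^p\,\mathrm{d}u\geq\int_0^1F_{\Vert Y\Vert}^{-1}(u)^p\,\mathrm{d}u$, does \emph{not} ``fail in that direction'': it is precisely Chebyshev's sum inequality for two similarly ordered (both nondecreasing, nonnegative) functions, $\int_0^1 fg\geq\bigl(\int_0^1 f\bigr)\bigl(\int_0^1 g\bigr)$, together with $\int_0^1\sigma=1$ and $(F_{\Vert Y\Vert}^{-1})^p=F_{\Vert Y\Vert^p}^{-1}$ --- and this is exactly how the paper proves the contraction (and definiteness in one stroke). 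What fails is only a \emph{pointwise} domination $\sigma\geq 1$, not the integral inequality. Your substitute argument, taking $U$ uniform and independent of $\Vert Y\Vert$, needs such a $U$ to exist on $(\Omega,\mathcal{F},P)$; the standing assumption of Remark~\ref{rem:Extension} only guarantees that \emph{some} uniform random variable exists, and a uniform independent of a prescribed $\Vert Y\Vert$ need not exist (on $\Omega=[0,1]$ with Lebesgue measure and $\Vert Y\Vert$ generating the Borel $\sigma$-algebra, such a $U$ would be independent of itself, hence constant). Since the contraction is also what makes a $\Vert\cdot\Vert_{\sigma,p}$-Cauchy sequence Cauchy in $L^p(P,X)$, the gap propagates into your completeness proof. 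The repair is one line: replace the independence argument by the correctly oriented Chebyshev sum inequality applied to $\sigma$ and $(F_{\Vert Y\Vert}^{-1})^p$ in the representation \eqref{alternative expression norm}, which requires no coupling at all.
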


\begin{proof}
	We denote the probability measure on $(\Omega,\mathcal{F})$ with $P$-density $\sigma\circ U$ for some $U\in\mathscr{U}(0,1)$ by $\sigma(U)P$ and the expectation of a non-negative random variable $Z$ on $(\Omega,\mathcal{F},\sigma(U)P)$ by $\mathbb{E}_U(Z)$. We obviously have
	\begin{equation*}
	\Vert Y\Vert_{\sigma,p}^p=\sup_{U\in\mathscr{U}(0,1)}\mathbb{E}_U\Vert Y\Vert^p,
	\end{equation*}
	which implies that $L_\sigma^p(P,X)$ is a subspace of the intersection of Banach spaces $\bigcap_{U\in\mathscr{U}(0,1)}L^p(\sigma(U)P,X)$ and that $\Vert\cdot\Vert_{\sigma, p}$ is a seminorm on $L_\sigma^p(P,X)$.
	
	By the rearrangement inequality (see, e.g., \citet[Theorem 5.25(2)]{McNeil2005}), the well-known fact that $F_{\sigma(U)}^{-1}=\sigma$ and $(F_{\Vert Y\Vert}^{-1})^p=F_{\Vert Y\Vert^p}^{-1}$ it follows for every $U\in\mathscr{U}(0,1)$ and each $X$-valued, strongly measurable $Y$ on $(\Omega,\mathcal{F},P)$, that
	\begin{equation*}
	\mathbb{E}\,(\sigma(U)\Vert Y\Vert^p)\leq \int_0^1\sigma(u)F_{\Vert Y\Vert}^{-1}(u)^p du
	\end{equation*}
	so that
	\begin{equation}\label{upper bound for norm}
	\Vert Y\Vert_{\sigma,p}^p\leq \int_0^1 \sigma(u)F_{\Vert Y\Vert}^{-1}(u)^p du.
	\end{equation}
	Moreover, if we fix for an $X$-valued, strongly measurable $Y$ on $(\Omega,\mathcal{F},P)$ some $U\in\mathscr{U}(0,1)$ such that $U$ and $\Vert Y\Vert$ are coupled in a comonotone way (such $U$ exists due to our general assumption on $(\Omega,\mathcal{F},P)$ made in Remark~\ref{rem:Extension}) then (\citet{Kusuoka})
	\begin{equation*}
	\mathbb{E}\,\big(\sigma(U)\Vert Y\Vert^p\big)=\int_0^1\sigma(u)F_{\Vert Y\Vert}^{-1}(u)^p du.
	\end{equation*}
	Together with~\eqref{upper bound for norm} we obtain for each $X$-valued, strongly measurable $Y$ on $(\Omega,\mathcal{F},P)$ that there is $U\in\mathscr{U}(0,1)$ such that
	\begin{equation*}
	\Vert Y\Vert_{\sigma,p}^p=\mathbb{E}\,(\sigma(U)\Vert Y\Vert^p)=\int_0^1\sigma(u)F_{\Vert Y\Vert}^{-1}(u)^p du,
	\end{equation*}
	proving~\eqref{alternative expression norm}.
	
	In order to see that the seminorm $\Vert\cdot\Vert_{\sigma,p}$ on $L_\sigma^p(P,X)$ is in fact a norm we apply the continuous version of Chebychev's inequality (see, e.g., \citet[Eq.~12.314]{Gradsteyn2007}) to the nonnegative, nondecreasing functions $\sigma$ and $(F_{\Vert Y\Vert}^{-1})^p$ on $[0,1)$ to obtain
	\begin{equation*}
	\int_0^1\sigma(u)F_{\Vert Y\Vert}^{-1}(u)^p du\geq \int_0^1\sigma(u)du\cdot\int_0^1 F_{\Vert Y\Vert}^{-1}(u)^p du=\int_0^1 F_{\Vert Y\Vert}^{-1}(u)^p du=\mathbb{E}\,(\Vert Y\Vert^p),
	\end{equation*}
	where the last equality follows from $(F_{\Vert Y\Vert}^{-1})^p=F_{\Vert Y\Vert^p}^{-1}$. 
	In particular, together with~\eqref{alternative expression norm} we obtain for every $X$-valued, strongly measurable $Y$
	\begin{equation*}
	\mathbb{E}\,(\Vert Y\Vert^p)\leq \Vert Y\Vert_{\sigma,p}^p,
	\end{equation*}
	which proves that $L_{\sigma}^p(P,X)$ embeds contractively into $L^p(P,X)$ and that $\Vert Y\Vert_{\sigma,p}=0$ implies $Y=0$ so that $\Vert \cdot\Vert_{\sigma,p}$ is indeed a norm.
	
	Finally, in order to prove that $L_\sigma^p(P,X)$ is a Banach space when equipped with the norm $\Vert\cdot\Vert_{\sigma,p}$, we first note that a Cauchy sequence $(Y_n)_{n\in\mathbb{N}}$ in $L_\sigma^p(P,X)$ is also a Cauchy sequence in $L^p(P,X)$ so that there is $Y\in L^p(P,X)$ with $Y=\lim_{n\rightarrow\infty}Y_n$ in $L^p(P,X)$. From this we conclude that $Y=\lim_{k\rightarrow\infty}Y_{n_k}$ $P$-almost everywhere on $\Omega$ for some subsequence $(Y_{n_k})_{k\in\mathbb{N}}$ of $(Y_n)_{n\in\mathbb{N}}$. Since for each $\varepsilon>0$ there is $N\in\mathbb{N}$ such that for all $U\in\mathscr{U}(0,1)$
	\begin{equation*}
	\varepsilon^p>\mathbb{E}\,(\sigma(U)\Vert Y_n-Y_m\Vert^p)
	\end{equation*}
	whenever $n,m\geq N$ it follows with Fatou's Lemma that for every $U\in\mathscr{U}(0,1)$ and each $n\geq N$ we have
	\begin{equation*}
	\mathbb{E}\,(\sigma(U)\Vert Y-Y_n\Vert^p)=\mathbb{E}\,(\lim_{k\rightarrow\infty}\sigma(U)\Vert Y_{n_k}-Y_n\Vert^p)\leq\liminf_{k\rightarrow\infty}\mathbb{E}\,(\sigma(U)\Vert Y_{n_k}-Y_n\Vert^p)\leq \varepsilon^p,
	\end{equation*}
	i.e., $\Vert Y-Y_n\Vert_{\sigma,p}\leq \varepsilon^p$ for every $n\geq N$. Thus, we conclude that
	\[Y=(Y-Y_N)+Y_N\in L_\sigma^p(P,X)\]
	and that $(Y_n)_{n\in\mathbb{N}}$ converges to $Y$ in $L_\sigma^p(P,X)$.
\end{proof}

\begin{rem}
	By~\eqref{alternative expression norm} $L_\sigma^p(P,X)$-membership of $Y$ only depends on the quantile function $F_{\Vert Y\Vert}^{-1}$ so that $L_\sigma^p(P,X)$ is invariant with respect to rearrangements. From the definition of $\Vert\cdot\Vert_{\sigma,p}$ it follows immediately that $L_\sigma^p(P,X)$ is an $L^\infty(P)$-module and that $\Vert\alpha Y\Vert_{\sigma,p}\leq \Vert\alpha\Vert_\infty\,\Vert Y\Vert_{\sigma,p}$ for all $\alpha\in L^\infty(P)$ and each $Y\in L_\sigma^p(P,X)$.
\end{rem}

We next show that the $L_\sigma^p(P,X)$-spaces behave like the classical Bochner-Lebesgue spaces $L^p(P,X)$ when one varies the exponent $p\in[1,\infty)$.

\begin{prop}\label{comparison of norms}
	Let $p$, $p'\in[1,\infty)$ be such that $p<p'$.
	\begin{enumerate}
		\item\label{enu:i} $L_\sigma^{p'}(P,X)\subseteq L_\sigma^p(P,X)$ and $\Vert Y\Vert_{\sigma,p}\leq\Vert Y\Vert_{\sigma,p'}$ for every $Y\in L_\sigma^{p'}(P,X)$.
		\item\label{enu:ii} If with $r:=p'/(p'-p)$ the distortion function $\sigma$ satisfies $\int_0^1\sigma^r (u)du<\infty$ then even $L^{p'}(P,X)\subseteq L_\sigma^p(P,X)$ and $\Vert Y\Vert_{\sigma,p}\leq\Vert Y\Vert_{p'}$ for every $Y\in L^{p'}(P,X)$.
	\end{enumerate}
\end{prop}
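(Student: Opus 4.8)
The plan is to reduce everything to the closed-form expression \eqref{alternative expression norm}, i.e.\ $\|Y\|_{\sigma,p}^{p}=\int_{0}^{1}\sigma(u)\,F_{\|Y\|}^{-1}(u)^{p}\,\mathrm{d}u$, which by Theorem~\ref{Banach space} is available for \emph{every} strongly measurable $X$-valued $Y$ (with both sides read in $[0,\infty]$), and then to invoke two classical one-variable integral inequalities. Throughout I write $g:=F_{\|Y\|}^{-1}$, a nonnegative, nondecreasing, left-continuous function on $[0,1)$; keeping all integrals in $[0,\infty]$, the asserted set inclusions will then follow automatically from the corresponding norm inequalities, so only the norm estimates require work.

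For part~\ref{enu:i} the point is that $\mathrm{d}\mu:=\sigma(u)\,\mathrm{d}u$ is a \emph{probability} measure on $[0,1)$ (because $\int_{0}^{1}\sigma\,\mathrm{d}u=1$), so that \eqref{alternative expression norm} reads $\|Y\|_{\sigma,q}=\|g\|_{L^{q}(\mu)}$ for $q\in\{p,p'\}$. The monotonicity of $L^{q}(\mu)$-norms in $q$ for a probability measure --- a one-line consequence of Jensen's inequality applied to the convex function $t\mapsto t^{p'/p}$, or of Hölder's inequality with exponents $p'/p$ and $p'/(p'-p)$ --- gives $\|g\|_{L^{p}(\mu)}\le\|g\|_{L^{p'}(\mu)}$, that is, $\|Y\|_{\sigma,p}\le\|Y\|_{\sigma,p'}$, and finiteness of the right-hand side forces finiteness of the left, whence $L_{\sigma}^{p'}(P,X)\subseteq L_{\sigma}^{p}(P,X)$.

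For part~\ref{enu:ii} I would apply Hölder's inequality on $([0,1),\mathrm{d}u)$ to the product $\sigma(u)\cdot g(u)^{p}$ with the conjugate exponents $r=p'/(p'-p)$ and $r'=p'/p$; since $p\,r'=p'$, this yields
\[
\|Y\|_{\sigma,p}^{p}=\int_{0}^{1}\sigma(u)\,g(u)^{p}\,\mathrm{d}u\ \le\ \Big(\int_{0}^{1}\sigma(u)^{r}\,\mathrm{d}u\Big)^{1/r}\Big(\int_{0}^{1}g(u)^{p'}\,\mathrm{d}u\Big)^{p/p'}=\Big(\int_{0}^{1}\sigma^{r}\Big)^{1/r}\,\|Y\|_{p'}^{p},
\]
the last equality being \eqref{eq:Lp}. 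The hypothesis $\int_{0}^{1}\sigma^{r}<\infty$ turns the first factor into a finite constant, so the right-hand side is finite for every $Y\in L^{p'}(P,X)$; hence $L^{p'}(P,X)\subseteq L_{\sigma}^{p}(P,X)$ with a continuous embedding.

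The computations are short once Theorem~\ref{Banach space} is in hand, so there is no deep obstacle. The points that need care are: (a) applying \eqref{alternative expression norm} to an arbitrary strongly measurable $Y$ before it is known to lie in either space, and keeping all integrals in $[0,\infty]$, so that the set inclusions genuinely follow from the norm bounds; and (b) in part~\ref{enu:ii}, pinning the Hölder exponents down --- which is precisely what the prescribed $r=p'/(p'-p)$ does --- and dealing with the constant $\big(\int_{0}^{1}\sigma^{r}\big)^{1/(pr)}$ produced by the bare Hölder estimate: obtaining the sharp, constant-free inequality $\|Y\|_{\sigma,p}\le\|Y\|_{p'}$ of the statement is the one step that goes beyond routine work, and one would try to squeeze it out of the monotonicity of $F_{\|Y\|}^{-1}$ (equivalently, the comonotone coupling with $\|Y\|$) rather than from Hölder alone.
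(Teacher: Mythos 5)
Your argument is essentially identical to the paper's: part \ref{enu:i} is the paper's H\"older estimate with exponents $r=p'/(p'-p)$ and $p'/p$ against the probability measure $\sigma(u)\,\mathrm{d}u$ (your ``monotonicity of $L^q(\mu)$-norms'' phrasing), and part \ref{enu:ii} is the paper's H\"older estimate on $\mathrm{d}u$ applied to $\sigma\cdot g^p$, both resting on \eqref{alternative expression norm}. So the proof is correct and follows the same route.

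On the one point you flag and leave open in part \ref{enu:ii}: do not try to squeeze out the constant-free inequality $\Vert Y\Vert_{\sigma,p}\le\Vert Y\Vert_{p'}$, because it is false in general. Take $p=1$, $p'=2$, $r=2$, $\sigma(u)=\tfrac34(1-u)^{-1/4}$ (a distortion function with $\int_0^1\sigma^2=\tfrac98<\infty$) and $Y=\sigma(U)x$ with $U$ uniform and $\Vert x\Vert=1$; then $F_{\Vert Y\Vert}^{-1}=\sigma$, so $\Vert Y\Vert_{\sigma,1}=\int_0^1\sigma^2=\tfrac98$ while $\Vert Y\Vert_2=\bigl(\int_0^1\sigma^2\bigr)^{1/2}=\tfrac{3}{2\sqrt2}<\tfrac98$. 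The correct conclusion is exactly what your (and the paper's) H\"older computation delivers, namely $\Vert Y\Vert_{\sigma,p}\le\bigl(\int_0^1\sigma^r(u)\,\mathrm{d}u\bigr)^{1/(pr)}\Vert Y\Vert_{p'}$, which still yields the continuous embedding $L^{p'}(P,X)\subseteq L_\sigma^p(P,X)$; the constant-free form printed in the proposition (and not actually established by the paper's own proof either) should be read with this constant. Your instinct to treat that step as the non-routine one was right --- it is an inaccuracy in the statement, not a gap in your argument.
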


\begin{proof}
	Setting $r:=p'/(p'-p)$ it follows from~\eqref{alternative expression norm}, $1/r+1/(p'/p)=1$ and H{\"o}lder's inequality that for each $X$-valued, strongly measurable $Y$ on $(\Omega,\mathcal{F},P)$
	\begin{align*}
	\Vert Y\Vert_{\sigma,p}^p&=\int_0^1\sigma^\frac{1}{r}(u)\sigma^\frac{1}{p'/p}(u)F_{\Vert Y\Vert}^{-1}(u)^p du\\
	&\leq \left(\int_0^1\sigma(u) du\right)^{1/r}\left(\int_0^1\sigma(u)F_{\Vert Y\Vert}^{-1}(u)^{p'} du\right)^{p/p'}=\Vert Y\Vert_{\sigma,p'}^p,
	\end{align*}
	which proves~\ref{enu:i} while~\ref{enu:ii} follows from~\eqref{alternative expression norm}, $1/r+1/(p'/p)=1$, and H\"older's inequality since
	\begin{align*}
	\Vert Y\Vert_{\sigma,p}^p&=\int_0^1\sigma(u)F_{\Vert Y\Vert}^{-1}(u)^p du\leq \left(\int_0^1\sigma^r(u) du\right)^{1/r}\left(\int_0^1F_{\Vert Y\Vert}^{-1}(u)^{p\frac{p'}{p}} du\right)^{p/p'}\\
	&=\left(\int_0^1\sigma^r(u) du\right)^{1/r}\bigl(\mathbb{E}\,\Vert Y\Vert^{p'}\bigr)^{p/p'}
	\end{align*}
	holds for each $X$-valued, strongly measurable $Y$ on $(\Omega,\mathcal{F},P)$.
\end{proof}

For a Banach space $X$ with (continuous) dual space $X^*$ we write as usual $\langle x^*,x\rangle:=x^*(x)$, $x\in X$, $x^*\in X^*$. The dual norm on $X^*$ will also be denoted by $\Vert\cdot\Vert$. If $Z$ is an $X^*$-valued, Bochner integrable random variable on $(\Omega,\mathcal{F},P)$ such that $\mathbb{E}\Vert Z\Vert=1$ then $\sigma_Z:=F_{\Vert Z\Vert}^{-1}$ is a distortion function. For two $X$-valued, strongly measurable $Y_1$, $Y_2$ on $(\Omega,\mathcal{F},P)$ we write $Y_1\sim Y_2$ if they have the same law, i.e., if $P^{Y_1}=P^{Y_2}$.

\begin{prop}\label{continuity of risk measure}
	Let $X$ be a real Banach space and let $Z$ be an $X^*$-valued, Bochner integrable random variable on $(\Omega,\mathcal{F},P)$ such that $\mathbb{E}\Vert Z\Vert=1$. Then, for every $p\in[1,\infty)$
	\[\rho_Z:L_{\sigma_Z}^p(P,X)\rightarrow\mathbb{R},\ Y\mapsto\sup\{\mathbb{E}\langle Z,Y'\rangle:\ Y\sim Y'\}\]
	is a well-defined subadditive, convex functional. Moreover, for $Y_1$, $Y_2\in L_{\sigma_Z}^p(P,X)$ we have
	\[|\rho_Z(Y_1)-\rho_Z(Y_2)|\leq\Vert Y_1-Y_2\Vert_{\sigma,p}.\]
\end{prop}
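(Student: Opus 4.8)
The plan is to check, in this order, that $\rho_Z(Y)$ is a well-defined real number with $|\rho_Z(Y)|\le\Vert Y\Vert_{\sigma_Z,p}$, then positive homogeneity and subadditivity (hence convexity), and finally to read off the Lipschitz estimate. For the first point fix $Y\in L_{\sigma_Z}^p(P,X)$ and any $Y'\sim Y$. The scalar function $\langle Z,Y'\rangle$ is measurable (approximate the strongly measurable $Z$ and $Y'$ by simple functions) and $|\langle Z,Y'\rangle|\le\Vert Z\Vert\,\Vert Y'\Vert$ pointwise; since $Y'\sim Y$ forces $\Vert Y'\Vert\sim\Vert Y\Vert$, the rearrangement inequality (\citet[Theorem 5.25(2)]{McNeil2005}, used as in~\eqref{eq:5}) together with $F_{\Vert Z\Vert}^{-1}=\sigma_Z$ and~\eqref{alternative expression norm} gives
\[
|\mathbb{E}\langle Z,Y'\rangle|\le\mathbb{E}\bigl(\Vert Z\Vert\,\Vert Y'\Vert\bigr)\le\int_0^1\sigma_Z(u)\,F_{\Vert Y\Vert}^{-1}(u)\,\mathrm{d}u=\Vert Y\Vert_{\sigma_Z,1}\le\Vert Y\Vert_{\sigma_Z,p}<\infty,
\]
the last inequality by Proposition~\ref{comparison of norms}\ref{enu:i}. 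Thus $\langle Z,Y'\rangle$ is integrable, and since $\{Y':Y'\sim Y\}$ is nonempty and each competing value lies in $[-\Vert Y\Vert_{\sigma_Z,p},\Vert Y\Vert_{\sigma_Z,p}]$, we get $\rho_Z(Y)\in\mathbb R$ and $|\rho_Z(Y)|\le\Vert Y\Vert_{\sigma_Z,p}$. Positive homogeneity is immediate: $\rho_Z(\lambda Y)=\sup_{V\sim Y}\mathbb{E}\langle Z,\lambda V\rangle=\lambda\rho_Z(Y)$ for $\lambda\ge 0$.

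For subadditivity, let $W\sim Y_1+Y_2$. Because $(\Omega,\mathcal F,P)$ is rich enough (Remark~\ref{rem:Extension}), there is a measure-preserving transformation $\tau$ of $\Omega$ with $W=(Y_1+Y_2)\circ\tau$ almost surely; equivalently, $W$ splits as $W=W_1+W_2$ with $W_i:=Y_i\circ\tau\sim Y_i$. Hence
\[
\mathbb{E}\langle Z,W\rangle=\mathbb{E}\langle Z,W_1\rangle+\mathbb{E}\langle Z,W_2\rangle\le\rho_Z(Y_1)+\rho_Z(Y_2),
\]
and taking the supremum over all $W\sim Y_1+Y_2$ yields $\rho_Z(Y_1+Y_2)\le\rho_Z(Y_1)+\rho_Z(Y_2)$; with positive homogeneity this also gives convexity. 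An equivalent, more structural route is the identity $\rho_Z(Y)=\sup\{\mathbb{E}\langle Z',Y\rangle:Z'\sim Z\}$, which displays $\rho_Z$ as a pointwise supremum of linear functionals and makes convexity and subadditivity obvious; proving this identity again comes down to re-realizing a prescribed joint law, so it is the same point.

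The main obstacle is precisely this re-realization step: deducing from the richness hypothesis that an arbitrary $W\sim Y_1+Y_2$ can be written as $(Y_1+Y_2)\circ\tau$ for a measure-preserving $\tau$ (in disintegration terms, that the conditional laws appearing in an arbitrary coupling of $P^Z$ and $P^Y$ can be reproduced on $\Omega$). On a standard atomless probability space this is classical; in general one passes to the product extension $(\Omega\times[0,1],P\otimes\mathrm{Leb})$ of Remark~\ref{rem:Extension}, whose additional coordinate supplies the required conditional randomness, after observing that this extension leaves the value of $\rho_Z$ unchanged. I expect this to be the one delicate ingredient; the rest is routine.

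Finally, the Lipschitz estimate follows formally from subadditivity and the bound of the first step: for $Y_1,Y_2\in L_{\sigma_Z}^p(P,X)$,
\[
\rho_Z(Y_1)=\rho_Z\bigl(Y_2+(Y_1-Y_2)\bigr)\le\rho_Z(Y_2)+\rho_Z(Y_1-Y_2)\le\rho_Z(Y_2)+\Vert Y_1-Y_2\Vert_{\sigma_Z,p},
\]
so $\rho_Z(Y_1)-\rho_Z(Y_2)\le\Vert Y_1-Y_2\Vert_{\sigma_Z,p}$; exchanging $Y_1$ and $Y_2$ and using $\Vert Y_2-Y_1\Vert_{\sigma_Z,p}=\Vert Y_1-Y_2\Vert_{\sigma_Z,p}$ gives the reverse inequality, whence $|\rho_Z(Y_1)-\rho_Z(Y_2)|\le\Vert Y_1-Y_2\Vert_{\sigma_Z,p}$.
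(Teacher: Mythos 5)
Your treatment of well-definedness, the bound $|\rho_Z(Y)|\le\Vert Y\Vert_{\sigma,p}$ via the rearrangement inequality and \eqref{alternative expression norm}, positive homogeneity, and the derivation of the Lipschitz estimate from subadditivity coincides with the paper's argument step for step. The one place where you and the paper diverge is subadditivity, and there your argument has a genuine flaw: the claim that every $W\sim Y_1+Y_2$ can be written as $(Y_1+Y_2)\circ\tau$ for a measure-preserving transformation $\tau$ of $\Omega$ is false, even on a standard atomless space. Take $\Omega=[0,1]$ with Lebesgue measure, $V(\omega):=2\omega \bmod 1$ in the role of $Y_1+Y_2$, and $W(\omega):=\omega$; both are uniform, so $W\sim V$, but $V(\tau(\omega))=\omega$ forces $\tau(\omega)\in\{\omega/2,(\omega+1)/2\}$, and writing $A:=\{\tau\ge 1/2\}$ one checks that measure preservation would require $|A^c\cap B|=|B|/2$ for every Borel $B\subseteq[0,1)$, which is impossible by taking $B=A$ and $B=A^c$. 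So the factorization through a single measure-preserving $\tau$ is not the right tool. What your argument actually needs is weaker and is true: given $W\sim Y_1+Y_2$, one can produce $W_1,W_2$ with $(W_1,W_2)\sim(Y_1,Y_2)$ and $W_1+W_2=W$ almost surely, by disintegrating the joint law of $(Y_1,Y_2)$ over the value of the sum and sampling from the resulting conditional kernels using auxiliary uniform randomness (available after the product extension of Remark~\ref{rem:Extension}); one must then also check that passing to the extension does not change the three suprema involved. You correctly flag this re-realization as the delicate point, but you do not carry it out, and the specific reduction you propose in its place does not hold.

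For what it is worth, the paper itself disposes of subadditivity in one sentence (``from the definition of $\rho_Z$ and strong measurability it follows immediately''), so it supplies no more detail than you do; your write-up is in fact more candid about where the real work sits. The standard way to make this step rigorous is the ``structural route'' you mention in passing: establish $\rho_Z(Y)=\sup\{\mathbb{E}\langle Z',Y\rangle: Z'\sim Z\}$ (equivalently, identify both suprema with the supremum of $\int\langle z,y\rangle\,\mathrm{d}\pi$ over all couplings $\pi$ of $P^Z$ and $P^Y$), after which $\rho_Z$ is a pointwise supremum of linear functionals and subadditivity is immediate. That identification is exactly the same re-realization problem, so until it (or the $W=W_1+W_2$ decomposition above) is proved, the subadditivity claim --- and with it convexity and the Lipschitz bound --- remains open in your proposal.
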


\begin{proof}
	It follows from $Y\sim Y'$ that $F_{\Vert Y\Vert}^{-1}=F_{\Vert Y'\Vert}^{-1}$. Hence, $Y'\in L_{\sigma_Z}^p(P,X)$ whenever $Y\in L_{\sigma_Z}^p(P,X)$ by~\eqref{alternative expression norm} in Theorem~\ref{Banach space} . From the strong measurability of $Z$ and $Y\in L_{\sigma_Z}^p(P,X)$ it follows immediately that $\omega\mapsto\langle Z(\omega),Y(\omega)\rangle$ is an $\mathbb{R}$-valued random variable on $(\Omega,\mathcal{F},P)$. The rearrangement inequality, the definition of $\sigma_Z$,~\eqref{alternative expression norm} in Theorem~\ref{Banach space} and Proposition~\ref{comparison of norms} imply that for $Y'\sim Y\in L_{\sigma_Z}^p(P,X)$
	\begin{equation*}
	|\mathbb{E}\langle Z,Y'\rangle|\leq\mathbb{E}(\Vert Z\Vert\,\Vert Y'\Vert)\leq\int_0^1 \sigma_Z(u)F_{\Vert Y\Vert}^{-1}(u)du=\Vert Y\Vert_{\sigma,1}\leq\Vert Y\Vert_{\sigma,p},
	\end{equation*}
	which proves that $\rho_Z$ is well-defined and that
	\begin{equation}\label{useful inequality}
	|\rho_Z(Y)|\leq \Vert Y\Vert_{\sigma,p}.
	\end{equation}
	Obviously, $\rho_Z(\lambda Y)=\lambda\rho_Z(Y)$ for all $\lambda>0$. Moreover, from the definition of $\rho_Z$ and strong measurability it follows immediately that $\rho_Z$ is subadditive. Therefore,
	\[\rho_Z(Y_1)=\rho(Y_2+Y_1-Y_2)\leq \rho_Z(Y_2)+\rho_Z(Y_1-Y_2).\]
	Interchanging the roles of $Y_1$, $Y_2$ in the above inequality gives
	\[|\rho_Z(Y_1)-\rho_Z(Y_2)|\leq\rho_Z(Y_1-Y_2),\]
	which together with~\eqref{useful inequality} proves $|\rho_Z(Y_1)-\rho_Z(Y_2)|\leq\Vert Y_1-Y_2\Vert_{\sigma,p}$.
\end{proof}

In the remainder of this section we will provide a closer look at the Banach spaces $L_\sigma^p(P,X)$.

\begin{prop}\label{bounded sigma}
	Let $X\neq\{0\}$. Then the following are equivalent.
	\begin{enumerate}
		\item\label{enum:1} For all $p\in [1,\infty)$ the spaces $L_\sigma^p(P,X)$ and $L^p(P,X)$ are isomorphic as Banach spaces.
		\item\label{enum:2} There is $p\in[1,\infty)$ such that $L_\sigma^p(P,X)=L^p(P,X)$ as sets.
		\item\label{enum:3} $\sigma$ is bounded.
	\end{enumerate}
\end{prop}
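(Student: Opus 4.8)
The plan is to prove the cycle of implications \ref{enum:1}$\Rightarrow$\ref{enum:2}$\Rightarrow$\ref{enum:3}$\Rightarrow$\ref{enum:1}. The implication \ref{enum:1}$\Rightarrow$\ref{enum:2} is essentially trivial: if the spaces are isomorphic as Banach spaces they are in particular equal as sets, since $L_\sigma^p(P,X)\subseteq L^p(P,X)$ always holds by Theorem~\ref{Banach space}, so an isomorphism forces the reverse inclusion. (One should be slightly careful here: ``isomorphic'' a priori only gives an abstract Banach space isomorphism, not that the identity map is one. However, since $L_\sigma^p(P,X)$ is continuously included in $L^p(P,X)$ via the identity and the norms satisfy $\Vert\cdot\Vert_p\le\Vert\cdot\Vert_{\sigma,p}$, it is cleaner to state \ref{enum:1} as ``$L_\sigma^p=L^p$ as sets for all $p$''—which is what the authors likely intend—and then \ref{enum:1}$\Rightarrow$\ref{enum:2} is immediate.)

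For \ref{enum:2}$\Rightarrow$\ref{enum:3} I would argue by contraposition: assume $\sigma$ is unbounded and construct, for the given $p$, a random variable $Y\in L^p(P,X)\setminus L_\sigma^p(P,X)$. Fix any $x_0\in X$ with $\Vert x_0\Vert=1$ (possible since $X\neq\{0\}$) and a uniform random variable $U\in\mathscr{U}(0,1)$. The idea is to take $Y:=g(U)\,x_0$ for a suitable nonnegative nondecreasing function $g$ on $[0,1)$, so that $F_{\Vert Y\Vert}^{-1}=g$ and, by~\eqref{alternative expression norm}, $\Vert Y\Vert_{\sigma,p}^p=\int_0^1\sigma(u)g(u)^p\,du$ while $\Vert Y\Vert_p^p=\int_0^1 g(u)^p\,du$. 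So the task reduces to the purely real-variable statement: if $\sigma$ is an unbounded distortion function, there exists a nonnegative $g$ with $\int_0^1 g^p<\infty$ but $\int_0^1\sigma g^p=\infty$. Writing $h:=g^p$, this is the standard fact that for any $\sigma\in L^1(0,1)$ that is unbounded (equivalently, not essentially bounded) there is $h\in L^1(0,1)$, $h\ge0$, with $\sigma h\notin L^1(0,1)$. One can build $h$ explicitly: pick a decreasing sequence of intervals (or level sets) $A_n\subseteq[0,1)$ on which $\sigma\ge 2^n$ and $P(A_n)>0$, and set $h=\sum_n c_n\mathbf{1}_{A_n}$ with $c_n$ chosen so that $\sum_n c_n P(A_n)<\infty$ but $\sum_n 2^n c_n P(A_n)=\infty$; since $\sigma$ is nondecreasing one may in fact take the $A_n$ to be intervals $[a_n,a_{n+1})$ with $a_n\uparrow 1$. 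This yields the desired $g=h^{1/p}$ and hence $Y=g(U)x_0$ lies in $L^p(P,X)$ but not in $L_\sigma^p(P,X)$, contradicting \ref{enum:2}.

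For \ref{enum:3}$\Rightarrow$\ref{enum:1}, suppose $\sigma\le M$ on $[0,1)$. Then for every $p$ and every strongly measurable $Y$, $\eqref{alternative expression norm}$ gives $\Vert Y\Vert_{\sigma,p}^p=\int_0^1\sigma(u)F_{\Vert Y\Vert}^{-1}(u)^p\,du\le M\int_0^1 F_{\Vert Y\Vert}^{-1}(u)^p\,du=M\,\Vert Y\Vert_p^p$, so $L^p(P,X)\subseteq L_\sigma^p(P,X)$; combined with the contractive embedding from Theorem~\ref{Banach space} we get $L_\sigma^p(P,X)=L^p(P,X)$ as sets with equivalent norms $\Vert\cdot\Vert_p\le\Vert\cdot\Vert_{\sigma,p}\le M^{1/p}\Vert\cdot\Vert_p$, i.e.\ the identity is a Banach space isomorphism for every $p\in[1,\infty)$. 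This establishes \ref{enum:1}.

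The main obstacle is the real-variable construction in \ref{enum:2}$\Rightarrow$\ref{enum:3}: one must be a little careful that an unbounded nondecreasing $\sigma$ with $\int_0^1\sigma=1$ really does admit an $L^1$ function $h\ge0$ with $\int_0^1\sigma h=\infty$, and that the resulting $g(U)x_0$ is genuinely strongly measurable and $X$-valued (which is automatic here since it takes values in the one-dimensional subspace $\mathbb{R}x_0$). Everything else is a direct application of~\eqref{alternative expression norm} and the contractive embedding already proved. A secondary subtlety worth a sentence is pinning down exactly what ``isomorphic as Banach spaces'' means in \ref{enum:1} versus the identity map being an isomorphism; the cleanest route, as above, is to carry the inclusion $L_\sigma^p\hookrightarrow L^p$ throughout so that set equality automatically upgrades to an isomorphism via the open mapping theorem (or directly via the norm equivalence produced in \ref{enum:3}$\Rightarrow$\ref{enum:1}).
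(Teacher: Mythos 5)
Your proof is correct, and the two outer implications (\ref{enum:1}$\Rightarrow$\ref{enum:2} and \ref{enum:3}$\Rightarrow$\ref{enum:1}) coincide with the paper's; your remark that ``isomorphic'' in \ref{enum:1} should be read as ``the identity/inclusion is an isomorphism'' is a fair observation, since the paper also dismisses \ref{enum:1}$\Rightarrow$\ref{enum:2} as obvious. Where you genuinely diverge is \ref{enum:2}$\Rightarrow$\ref{enum:3}. The paper argues in the forward direction: set equality plus the contractive embedding of Theorem~\ref{Banach space} yields, via Banach's Isomorphism Theorem, a uniform constant $C$ with $\sup_U\int_\Omega\sigma(U)\Vert Y\Vert^p\,dP\le C\Vert Y\Vert_p^p$; testing against $Y=|f|^{1/p}x$ for arbitrary $f\in L^1(P)$ shows that multiplication by $\sigma\circ U$ maps $L^1$ into $L^1$, whence $\sigma\circ U\in L^\infty$ and $\sigma$ is bounded. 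You instead argue contrapositively and make the underlying Köthe-duality fact ($L^1$--$L^\infty$) explicit: for unbounded nondecreasing $\sigma$ you build $h=\sum_n c_n\1_{[a_n,a_{n+1})}\in L^1(0,1)$ with $\int_0^1\sigma h=\infty$ and take $Y=h(U)^{1/p}x_0\in L^p\setminus L_\sigma^p$. This is more elementary (no open mapping theorem, no appeal to the unproved assertion that a function multiplying $L^1$ boundedly into $L^1$ lies in $L^\infty$), at the cost of an explicit construction. One small point: with $c_n=\bigl(2^n n\,(a_{n+1}-a_n)\bigr)^{-1}$ the function $h$ need not be nondecreasing, so $F_{\Vert Y\Vert}^{-1}$ need not equal $h^{1/p}$; but this is immaterial, since the definition of $\Vert\cdot\Vert_{\sigma,p}$ as a supremum over uniforms already gives $\Vert h(U)^{1/p}x_0\Vert_{\sigma,p}^p\ge\mathbb{E}\bigl(\sigma(U)h(U)\bigr)=\int_0^1\sigma(u)h(u)\,du=\infty$ without invoking \eqref{alternative expression norm}.
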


\begin{proof}
	Obviously, \ref{enum:1} implies \ref{enum:2}. By Theorem~\ref{Banach space}, $L_\sigma^p(P,X)$ embeds contractively into $L^p(P,X)$. Thus, if~\ref{enum:2} holds, this embedding is onto so that by Banach's Isomorphism Theorem there is $C>0$ such that
	\begin{equation*}
	\forall\,Y\in L^p(P,X):\,\sup_{U\in\mathscr{U}(0,1)}\int_\Omega \sigma(U(\omega))\Vert Y(\omega)\Vert^p dP(\omega)\leq C\int_\Omega\Vert Y(\omega)\Vert^p dP(\omega),
	\end{equation*}
	where $\mathscr{U}(0,1)$ is defined as before. Choose $f\in L^1(P,\mathbb{R})$ and $x\in X$ with $\Vert x\Vert=1$. Then $Y(\omega):=|f(\omega)|^{1/p}x$ defines an element of $L^p(P,X)$ so that for any $U\in\mathscr{U}(0,1)$ we have
	\begin{align*}
	\left|\int_\Omega \sigma(U(\omega)) f(\omega) dP(\omega)\right|&\leq\int_\Omega \sigma(U(\omega))\Vert Y(\omega)\Vert^p dP(\omega)\\
	&\leq C\int_\Omega\Vert Y(\omega)\Vert^p dP(\omega)\\
	&=C\int_\Omega|f(\omega)| dP(\omega)<\infty.
	\end{align*}
	Since $f\in L^1(P,\mathbb{R})$ was chosen arbitrarily it follows that $\sigma\circ U\in L^\infty (P,\mathbb{R})$ which by $U\in\mathscr{U}(0,1)$ and by the fact that $\sigma$ is nondecreasing implies boundedness of $\sigma$. Thus, \ref{enum:3} follows from \ref{enum:2}.
	
	Finally,~\ref{enum:3} and the fact that $L_\sigma^p(P,X)$ embeds contractively into $L^p(P,X)$ for any $p\in[1,\infty)$ implies~\ref{enum:1} by Theorem~\ref{Banach space}.
\end{proof}

\begin{prop}\label{simple function dense}
	We have the following:
	\begin{enumerate}
		\item\label{enu:p1} For every $p\in[1,\infty)$, $L^\infty(P,X)$ embeds contractively into $L_\sigma^p(P,X)$.
		\item\label{enu:p2} Simple functions are dense in $L_\sigma^p(P,X)$ for every $p\in[1,\infty)$.
	\end{enumerate}
\end{prop}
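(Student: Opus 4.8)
For part~\ref{enu:p1}, the plan is to show that every $Y\in L^\infty(P,X)$ lies in $L_\sigma^p(P,X)$ with $\Vert Y\Vert_{\sigma,p}\leq\Vert Y\Vert_\infty$. This is immediate from the defining supremum: for any $U\in\mathscr{U}(0,1)$ we have
\[
\int_\Omega\sigma(U(\omega))\Vert Y(\omega)\Vert^p\,dP(\omega)\leq\Vert Y\Vert_\infty^p\int_\Omega\sigma(U(\omega))\,dP(\omega)=\Vert Y\Vert_\infty^p\int_0^1\sigma(u)\,du=\Vert Y\Vert_\infty^p,
\]
using that $\sigma\circ U$ has the same distribution as $\sigma$ on $[0,1]$ and the normalization $\int_0^1\sigma(u)\,du=1$. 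Taking the supremum over $U$ gives the contractive embedding; linearity of the inclusion map is clear.

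For part~\ref{enu:p2}, I would fix $Y\in L_\sigma^p(P,X)$ and approximate it in two stages. First I would truncate: set $Y_n:=Y\cdot\1_{\{\Vert Y\Vert\leq n\}}$. Since $Y-Y_n=Y\cdot\1_{\{\Vert Y\Vert>n\}}$, the quantile function $F_{\Vert Y-Y_n\Vert}^{-1}$ is dominated by $F_{\Vert Y\Vert}^{-1}$ and vanishes on the interval where $F_{\Vert Y\Vert}^{-1}\leq n$; invoking the rearrangement representation~\eqref{alternative expression norm}, $\Vert Y-Y_n\Vert_{\sigma,p}^p=\int_0^1\sigma(u)F_{\Vert Y-Y_n\Vert}^{-1}(u)^p\,du\to 0$ by dominated convergence, the dominating function being $\sigma\cdot(F_{\Vert Y\Vert}^{-1})^p$, which is integrable precisely because $Y\in L_\sigma^p(P,X)$. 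So bounded random variables are dense. Second, given a bounded strongly measurable $Y_n$, by strong measurability (Pettis) there is a sequence of simple functions $s_k$ with $s_k\to Y_n$ pointwise $P$-a.e., and the standard construction can be arranged so that $\Vert s_k\Vert\leq\Vert Y_n\Vert\leq n$ for all $k$. Then $\Vert s_k-Y_n\Vert\to 0$ pointwise and is bounded by $2n$, so again by~\eqref{alternative expression norm} and dominated convergence (dominating function $\sigma\cdot(2n)^p$, integrable since $\sigma$ is) we get $\Vert s_k-Y_n\Vert_{\sigma,p}\to 0$. Combining the two stages via the triangle inequality yields density of simple functions in $L_\sigma^p(P,X)$.

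The only genuine subtlety is justifying the passage to the quantile representation in both limits: $\Vert\cdot\Vert_{\sigma,p}$ is defined as a supremum over $U$, not directly as a quantile integral, so one must use~\eqref{alternative expression norm} from Theorem~\ref{Banach space} to rewrite each norm in the approximating sequence as $\int_0^1\sigma(u)F_{\Vert\cdot\Vert}^{-1}(u)^p\,du$ before applying dominated convergence — which is legitimate, since~\eqref{alternative expression norm} holds for every strongly measurable $X$-valued random variable. One also needs that $F_{\Vert Y-Y_n\Vert}^{-1}\leq F_{\Vert Y\Vert}^{-1}$ pointwise on $[0,1)$, which follows from $\Vert Y-Y_n\Vert\leq\Vert Y\Vert$ and monotonicity of quantiles, and that $F_{\Vert s_k-Y_n\Vert}^{-1}\leq 2n$ from the uniform bound. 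Everything else is routine.
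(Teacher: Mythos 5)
Your proof is correct, but it takes a genuinely different route from the paper's for part~\ref{enu:p2}. The paper performs the cutoff in the probability/quantile variable: it picks $u_\varepsilon$ with $\int_{u_\varepsilon}^1\sigma(u)F_{\Vert Y\Vert}^{-1}(u)^p\,du<\varepsilon^p$, uses separability of the essential range to build a countable disjoint $\varepsilon$-net $\{E_j\}$, takes finitely many preimages of total mass exceeding $u_\varepsilon$, and produces in one step a simple function that is uniformly $\varepsilon$-close on that set and zero off it, estimating the two error terms via the rearrangement inequality and a comparison of quantile functions. You instead cut off in the value domain (truncation at $\{\Vert Y\Vert\le n\}$) and then invoke the standard pointwise simple-function approximation of a bounded strongly measurable function, controlling both limits by dominated convergence applied to $\sigma\cdot(F^{-1})^p$ after rewriting each norm via~\eqref{alternative expression norm} --- and you correctly flag that this rewriting is the one nontrivial step, and that it is licensed because~\eqref{alternative expression norm} holds for all strongly measurable $Y$. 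The two auxiliary facts you rely on (that $\Vert W_k\Vert\to 0$ a.e.\ forces $F_{\Vert W_k\Vert}^{-1}(u)\to 0$ for every $u\in(0,1)$, via convergence in probability, and that the simple approximants can be chosen with a pointwise norm bound) are both standard and true, so there is no gap. The paper's construction buys an explicit, quantitative approximant (which it reuses later when discussing conditional expectations), while yours is more modular and closer to the textbook $L^p(P,X)$ argument; for part~\ref{enu:p1} the two arguments are essentially identical, differing only in whether one passes through the quantile representation or computes $\mathbb{E}\,\sigma(U)=1$ directly.
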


\begin{proof}
	It follows from the definition of quantile function that $0\leq F_{\Vert Y\Vert}^{-1}\leq \Vert Y\Vert_\infty$ for every $X$-valued, strongly measurable $Y$ on $(\Omega,\mathcal{F},P)$ which implies by~\eqref{alternative expression norm} in Theorem~\ref{Banach space}
	\begin{equation*}
	\Vert Y\Vert_{\sigma,p}^p=\int_0^1 \sigma(u)F_{\Vert Y\Vert}^{-1}(u)^p du\leq \Vert Y\Vert_\infty^p,
	\end{equation*}
	proving~\ref{enu:p1}.
	
	In order to prove~\ref{enu:p2} let $Y\in L_\sigma^p(P,X)$ and fix $\varepsilon\in (0,1)$. We choose $u_\varepsilon\in (0,1)$ such that $\int_{u_\varepsilon}^1 \sigma(u)F_{\Vert Y\Vert}^{-1}(u)^p du<\varepsilon^p$. By the strong measurability of $Y$ there are $N\in\mathcal{F}$ with $P(N)=0$ and a separable, closed subspace $X_1$ of $X$ such that $\1_{N^c}Y$ is $X_1$-valued. Let $\{x_j;\,j\in\mathbb{N}\}$ be a dense subset of $X_1$. Denoting the open ball about $x_j$ with radius $\varepsilon$ in $X$ by $B_\varepsilon(x_j)$ we choose Borel subsets $E_j\subseteq B_\varepsilon(x_j)$ such that $X_1\subseteq\cup_{j\in\mathbb{N}}E_j$ and such that the $E_j$ are pairwise disjoint. Then $((\1_{N^c}Y)^{-1}(E_j))_{j\in\mathbb{N}}$ is a pairwise disjoint sequence in $\mathcal{F}$ such that $P(\cup_{j\in\mathbb{N}}(\1_{N^c}Y)^{-1}(E_j))=1$. Let $n\in\mathbb{N}$ be such that 
	\begin{equation}\label{n for simple function}
	\sum_{j=1}^n P((\1_{N^c}Y)^{-1}(E_j))>u_\varepsilon
	\end{equation}
	and set $E:=\cup_{j=1}^n (\1_{N^c}Y)^{-1}(E_j)$.
	
	Obviously, for $t\geq 0$ we have $\{\1_{E^c}\Vert Y\Vert^p\leq t\}\supseteq \{\Vert Y\Vert^p\leq t\}$ so that $F_{\1_{E^c}\Vert Y\Vert^p}(t)\geq F_{\Vert Y\Vert^p}(t)$. Therefore,
	\begin{equation*}
	\forall\,u\in[0,1]:\,\{t\geq 0;\,F_{\1_{E^c}\Vert Y\Vert^p}(t)\geq u\}\supseteq\{t\geq 0;\,F_{\Vert Y\Vert^p}(t)\geq u\},
	\end{equation*}
	which implies $F_{\1_{E^c}\Vert Y\Vert^p}^{-1}\leq F_{\Vert Y\Vert^p}^{-1}$. Furthermore, 
	\[F_{\1_{E^c}\Vert Y\Vert^p}(0)=P(\1_{E^c}\Vert Y\Vert^p=0)\geq P(E)\]
	so that $F_{\1_{E^c}\Vert Y\Vert^p}^{-1}(u)=0$ for every $u\in[0,P(E)]$, which together with $F_{\Vert Y\Vert^p}^{-1}=(F_{\Vert Y\Vert}^{-1})^p$ and~\eqref{n for simple function} yields for all $u\in[0,1]$
	\begin{equation}\label{estimate for quantiles}
	\,F_{\1_{E^c}\Vert Y\Vert^p}^{-1}(u)\leq\1_{(P(E),1]}(u)F_{\Vert Y\Vert}^{-1}(u)^p\leq\1_{(u_\varepsilon,1]}(u)F_{\Vert Y\Vert}^{-1}(u)^p.
	\end{equation}
	Defining $Y_\varepsilon:=\sum_{j=1}^n\1_{(\1_{N^c}Y)^{-1}(E_j)}\, x_j$ it follows from the definition of $E$ that $\Vert \1_{N^c}Y-Y_\varepsilon\Vert\leq\varepsilon$ on $E$ while $Y_\varepsilon=0$ on $E^c$. For every $U\in\mathscr{U}(0,1)$ we obtain
	\begin{align*}
	\int_\Omega \sigma(U)\Vert \1_{N^c}Y-Y_\varepsilon\Vert^p dP&=\int_\Omega\sigma(U)\1_E\Vert \1_{N^c}Y-Y_\varepsilon\Vert^p dP+\int_\Omega\sigma(U)\1_{E^c}\Vert Y\Vert^p dP\\
	&\leq\varepsilon^p\int_\Omega\sigma(U)dP+\int_0^1\sigma(u)F_{\1_{E^c}\Vert Y\Vert^p}^{-1}(u)du\\
	&\leq\varepsilon^p+\int_{u_\varepsilon}^1 \sigma(u)F_{\Vert Y\Vert^p}^{-1}(u)^p du<2\varepsilon^p,
	\end{align*}
	where we used the rearrangement inequality (see \citet[Theorem 5.25(2)]{McNeil2005}) in the first inequality and~\eqref{estimate for quantiles} in the second one while the last inequality follows form the choice of $u_\varepsilon$. Thus, $\Vert Y-Y_\varepsilon\Vert<\sqrt[p]{2}\varepsilon$, proving~\ref{enu:p2}.
\end{proof}

\begin{thm}\label{Hilbert space}
	For $X\neq\{0\}$ the following are equivalent.
	\begin{enumerate}
		\item\label{enu:H1} $L_\sigma^p(P,X)$ is a Hilbert space.
		\item\label{enu:H2} $X$ is a Hilbert space, $p=2$, and $\sigma=1$ on $(0,1)$. 
	\end{enumerate}
\end{thm}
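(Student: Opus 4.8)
The converse implication is immediate: if $\sigma=1$ on $(0,1)$ and $p=2$, then~\eqref{alternative expression norm} gives $\Vert Y\Vert_{\sigma,2}^2=\int_0^1 F_{\Vert Y\Vert}^{-1}(u)^2\,du=\Vert Y\Vert_2^2$, so $L_\sigma^2(P,X)$ equals the Bochner space $L^2(P,X)$ with its usual norm, which is a Hilbert space whenever $X$ is. For the forward implication the plan is to invoke the Jordan--von Neumann theorem, so that it suffices to deduce from validity of the parallelogram law for $\Vert\cdot\Vert_{\sigma,p}$ that $X$ is a Hilbert space, that $p=2$, and that $\sigma=1$ on $(0,1)$. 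I would test the parallelogram law only on simple functions of the form $\1_E\,x$ with $x\in X$, evaluating norms via~\eqref{alternative expression norm}: writing $s(t):=\int_{1-t}^1\sigma(u)\,du$ (a nondecreasing function with $s(0)=0$, $s(1)=1$) one has $\Vert\1_E\,x\Vert_{\sigma,p}^p=\Vert x\Vert^p\,s(P(E))$. Since $(\Omega,\mathcal{F},P)$ carries a uniform random variable (Remark~\ref{rem:Extension}) it is atomless, so measurable sets of any prescribed probabilities, disjoint or nested, are available.

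Two of the three conclusions come out quickly. Applying the parallelogram law to $Y_1=\1_A\,x$ and $Y_2=\1_A\,y$, where $0<P(A)<1$ and $x,y\in X$ are arbitrary, the common factor $s(P(A))^{2/p}>0$ cancels and leaves $\Vert x+y\Vert^2+\Vert x-y\Vert^2=2\Vert x\Vert^2+2\Vert y\Vert^2$, so $X$ is a Hilbert space by Jordan--von Neumann. Next, applying it to $Y_1=\1_A\,x$ and $Y_2=\1_B\,x$ with $A,B$ disjoint and $\Vert x\Vert=1$, disjointness forces $\Vert Y_1+Y_2\Vert=\Vert Y_1-Y_2\Vert=\1_{A\cup B}$ as random variables, whence the law reduces to $s(a+b)^{2/p}=s(a)^{2/p}+s(b)^{2/p}$ for all $a,b\ge0$ with $a+b\le1$. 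Thus $g:=s^{2/p}$ is a monotone solution of Cauchy's functional equation, hence linear, and $g(1)=1$ forces $g(t)=t$, that is $s(t)=t^{p/2}$.

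Extracting $p=2$ is the decisive step. Here I would fix $t\in(0,\tfrac12]$, choose $A\subseteq B$ with $P(A)=P(B\setminus A)=t$ and a unit vector $x$, and test the parallelogram law on $Y_1=\1_A\,x$, $Y_2=\1_B\,x$. A direct computation with~\eqref{alternative expression norm} gives $\Vert Y_1+Y_2\Vert_{\sigma,p}^p=s(2t)+(2^p-1)s(t)$, $\Vert Y_1-Y_2\Vert_{\sigma,p}^p=\Vert Y_1\Vert_{\sigma,p}^p=s(t)$ and $\Vert Y_2\Vert_{\sigma,p}^p=s(2t)$; substituting $s(r)=r^{p/2}$ from the previous step and simplifying, the parallelogram identity collapses to the single scalar equation $2^{p/2}+2^p-1=5^{p/2}$. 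With $q:=p/2\ge\tfrac12$ this is $h(q):=5^q+1-2^q-4^q=0$, and the main obstacle is to show that $q=1$ is its only solution with $q\ge\tfrac12$. I expect to settle this by observing that $4^{-q}h'(q)=(\ln5)(5/4)^q-(\ln2)(1/2)^q-\ln4$ is strictly increasing, so $h'$ has at most one zero, at which it passes from negative to positive; hence $h$ is strictly unimodal, and since $h(0)=h(1)=0$ no further zero is possible. Therefore $p=2$.

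Finally, with $p=2$ the identity $s(t)=t$ from the second step reads $\int_{1-t}^1\sigma(u)\,du=t$ for all $t\in[0,1]$, so $\sigma=1$ almost everywhere on $(0,1)$; because $\sigma$ is nondecreasing and left-continuous this improves to $\sigma(u)=1$ for every $u\in(0,1)$, which is the assertion. Apart from the elementary analysis of the function $h$ in the third step, the whole argument is just the evaluation of $\Vert\cdot\Vert_{\sigma,p}$ on rank-one simple functions through~\eqref{alternative expression norm}, so I anticipate that this is the only point requiring real care.
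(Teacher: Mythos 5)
Your proof is correct, and while it follows the same overall blueprint as the paper's proof --- testing the parallelogram identity on rank-one indicator functions evaluated through~\eqref{alternative expression norm}, first pinning down $s(t)=\int_{1-t}^1\sigma(u)\,du$ as $t^{p/2}$ and then using a nested pair $A\subseteq B$ to force $p=2$ --- it differs in three worthwhile ways. First, you explicitly verify that $X$ is a Hilbert space by testing the identity on $\1_A x$ and $\1_A y$; the paper's proof of the forward implication silently omits this part of statement (ii) (it is recoverable, since $x\mapsto\1_\Omega x$ embeds $X$ isometrically into $L^2(P,X)$ once $p=2$ and $\sigma=1$ are known, but your step makes it explicit and costs nothing). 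Second, where the paper first rules out $p>2$ and then argues that $f(\alpha)=\bigl(\int_\alpha^1\sigma(u)\,du\bigr)^{2/p}$ is simultaneously convex and concave --- the convexity claim being the most delicate point of that argument --- you obtain $s(a+b)^{2/p}=s(a)^{2/p}+s(b)^{2/p}$ directly from arbitrary disjoint supports and solve Cauchy's functional equation for the monotone function $s^{2/p}$; this is cleaner and needs no preliminary reduction to $p\le 2$. Third, your choice $P(B)=2P(A)$ yields $2^{p/2}+2^p-1=5^{p/2}$ in place of the paper's $4^{p/2}+2^p-1=7^{p/2}$ (from $P(B)=4P(A)$); the paper disposes of its equation by noting that a convex power function meets a line in at most two points, whereas your unimodality analysis of $h(q)=5^q+1-2^q-4^q$ via the strictly increasing function $4^{-q}h'(q)$ is equally valid (indeed $h'(0)=\ln(5/8)<0$ and $h'(q)\to\infty$, so $h$ decreases then increases, and $h(0)=h(1)=0$ leaves $q=1$ as the only root with $q\ge\tfrac12$). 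All the norm computations for $\1_A x$, $\1_B x$ and $\1_A x\pm\1_B x$ check out, as does the final upgrade from $\sigma=1$ almost everywhere to $\sigma=1$ on $(0,1)$ using monotonicity and left-continuity.
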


\begin{proof}
	Obviously, \ref{enu:H2} implies~\ref{enu:H1}.
	
	Let $E_\alpha\in\mathcal{F}$ be chosen with $P(E_\alpha)=\alpha$. For $\alpha\in(0,1)$ and $x\in X$ a straightforward calculation gives for $Y=\1_{E_\alpha}x$ that $F_{\Vert Y\Vert}^{-1}=\Vert x\Vert\1_{(1-\alpha,1)}$. Moreover, for $x_1$, $x_2\in X$ with $\Vert x_1\Vert=\Vert x_2\Vert=1$ and $Y_1:=\1_{E_\alpha}x_1, Y_2:=\1_{E_\alpha^c}x_2$ we have $\Vert Y_1\pm Y_2\|_{\sigma,p}=1$. Thus, by the parallelogram identity and~\eqref{alternative expression norm} we obtain for arbitrary $\alpha\in[0,1]$ 
	\begin{align}\label{integral identity}
	1&=\frac{1}{2}\Vert Y_1- Y_2\|_{\sigma,p}^2+\frac12\Vert Y_1+ Y_2\|_{\sigma,p}^2=\Vert Y_1\|_{\sigma,p}^2+\Vert Y_2\|_{\sigma,p}^2\\
	&=\left(\int_{1-\alpha}^1\sigma(u) du\right)^{2/p}+\left(\int_\alpha^1\sigma(u) du\right)^{2/p}.\nonumber
	\end{align}
	Pick $\alpha\in(0,1)$ so that $0<\int_0^\alpha\sigma(u)du<1$. 
	If $p>2$, then 
	\begin{align*}
	1&=\left(\int_{1-\alpha}^1\sigma(u) du\right)^{2/p}+\left(\int_\alpha^1\sigma(u) du\right)^{2/p}\\
	&> \int_{1-\alpha}^1\sigma(u) du+\int_\alpha^1\sigma(u) du\\
	&\ge \int_{0}^\alpha\sigma(u) du+\int_\alpha^1\sigma(u) du=1,
	\end{align*}
	as $\sigma$ is nondecreasing.
	This is a contradiction and hence $p\le 2$.
	
	Define the function 
	\begin{equation}\label{eq:2}
	f(\alpha):=\left(\int_\alpha^1\sigma(u)du\right)^{2/p}.
	\end{equation}
	Since $\sigma$ is continuous from the left $f$ is differentiable from the left with increasing left derivative, thus $f$ is convex. Further we have $f(\alpha)+f(1-\alpha)=1$ by~\eqref{integral identity} so that $f$ is concave as well. Hence, $f$ is affine, i.e., $f(\alpha)=b+c\cdot\alpha$, and
	we deduce from $f(1)=0, f(0)=1$, and~\eqref{eq:2} the particular form 
	\begin{equation}\label{eq:sigma}
	\sigma(u)=\frac p2(1-u)^{\frac p2-1}
	\end{equation}
	which implies $\int_{1-\alpha}^1\sigma(u)du=\alpha^{p/2}$.
	
	Next consider measurable sets $A$ and $B$ with $A\subseteq B$. The parallelogram law~\eqref{integral identity}, applied to the random variables $Y_1:=\1_A x$ and $Y_2:=\1_B x$, reads 
	\[ \frac12 \big(P(B)-P(A)\big)+\frac12\left(P(B)^{p/2}-P(A)^{p/2}+2^pP(A)^{p/2}\right)^{2/p}=P(A)+P(B),\] i.e.,
	\[ P(B)^{p/2}+(2^p-1)P(A)^{p/2}=\left(3P(A)+P(B)\right)^{p/2}.\]
	We may specify the sets further by $P(B)=4 P(A)$, then the latter equality reduces to
	\[ 4^{p/2}+(2^p-1)=7^{p/2}\]
	so that we are left with solving the equation
	\[2x-1=x^\frac{\log 7}{2\log 2}\] for $x=2^p$.
	
	The convex function $x^\frac{\log 7}{\log 4}$ does not have more than two intersections with the line $2x-1$, and these are $x=1$ and $x=4$, i.e., $p=0$ and $p=2$.
	
	$p=0$ does not qualify, and the distortion function for $p=2$ is $\sigma(\cdot)=1$, by~\eqref{eq:sigma}. This concludes the proof.

\end{proof}

\section{\label{sec:ScalarDual}The dual space in the scalar valued case}

In this section we are going to determine the dual space of $L_\sigma^p:=L_\sigma^p(P):=L_\sigma^p(P,\mathbb{K})$, $\mathbb{K}\in\{\mathbb{R},\mathbb{C}\}$. For $\varphi\in L_\sigma^p(P)^*$ we denote the dual norm of $\varphi$ by $\Vert\varphi\Vert_{\sigma,p}^*$. Some of the results presented in this section are inspired by \citet{Lorentz}.

\begin{defn}
	As usual we denote by $L^0(P)$ the set of $\mathbb{K}$-valued random variables on $(\Omega,\mathcal{F},P)$, where random variables which coincide $P$-almost surely are identified. We define the \textit{K\"othe dual} of $L_\sigma^p(P)$ as
	\[L_\sigma^p(P)^\times:=\{Z\in L^0(P);\,\forall\,Y\in L_\sigma^p(P):\,ZY\in L^1(P)\}.\]
	Since $L^\infty(P)\subseteq L_\sigma^p(P)$ for all $p\in[1,\infty)$ it follows from taking $Y=\1_{\{Z\neq 0\}}\frac{\overline{Z}}{|Z|}$ that $Z\in L^1(P)$ whenever $Z\in L_\sigma^p(P)^\times$.
\end{defn}

\begin{prop}\label{Koethe dual}
	For every $Z\in L_\sigma^p(P)^\times$
	\[\sup\{|\mathbb{E}(ZY)|;\,\Vert Y\Vert_{\sigma,p}\leq 1\}<\infty.\]
	Moreover,
	\[\varphi_Z:L_\sigma^p(P)\rightarrow\mathbb{K},\ \varphi_Z(Y)=\mathbb{E}(ZY)\]
	belongs to $L_\sigma^p(P)^*$ and
	\[\Phi: L_\sigma^p(P)^\times\rightarrow L_\sigma^p(P)^*,\ Z\mapsto\varphi_Z\]
	is a linear isomorphism with
	\begin{equation}\label{dual norm Koethe}
	\forall\,Z\in L_\sigma^p(P)^\times:\,\Vert\Phi(Z)\Vert_{\sigma,p}^*=\sup\{|\mathbb{E}(ZY)|;\,\Vert Y\Vert_{\sigma,p}\leq 1\}.
	\end{equation} 
\end{prop}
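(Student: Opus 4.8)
### Proof Plan for Proposition~\ref{Koethe dual}

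The plan is to establish the three claims in turn: (i) finiteness of the supremum for each $Z \in L_\sigma^p(P)^\times$, (ii) that $\varphi_Z$ lies in the dual and that $\Phi$ is linear and injective, and (iii) that $\Phi$ is surjective, with the norm identity recorded along the way. The finiteness in (i) is a closed-graph argument: consider the map $Y \mapsto ZY$ from $L_\sigma^p(P)$ into $L^1(P)$, which is well-defined by the very definition of the K\"othe dual. To see its graph is closed, take $Y_n \to Y$ in $L_\sigma^p(P)$ and $ZY_n \to W$ in $L^1(P)$; by Theorem~\ref{Banach space} convergence in $L_\sigma^p(P)$ implies convergence in $L^p(P)$, hence a subsequence converges $P$-a.e., so that along this subsequence $ZY_{n_k} \to ZY$ a.e., forcing $W = ZY$. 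By the closed graph theorem this map is bounded, say $\|ZY\|_1 \le C \|Y\|_{\sigma,p}$, and then $|\mathbb{E}(ZY)| \le \|ZY\|_1 \le C\|Y\|_{\sigma,p}$ gives both the finiteness of the supremum and the fact that $\varphi_Z \in L_\sigma^p(P)^*$ with $\|\varphi_Z\|_{\sigma,p}^* \le C$; indeed $\|\Phi(Z)\|_{\sigma,p}^* = \sup\{|\mathbb{E}(ZY)| : \|Y\|_{\sigma,p} \le 1\}$ is just the definition of the dual norm, which yields~\eqref{dual norm Koethe} at once.

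Linearity of $\Phi$ is immediate. For injectivity, if $\varphi_Z = 0$ then $\mathbb{E}(ZY) = 0$ for all $Y \in L_\sigma^p(P)$; since $L^\infty(P) \subseteq L_\sigma^p(P)$ by Proposition~\ref{simple function dense}\ref{enu:p1} (or the remark preceding the definition of the K\"othe dual), in particular testing against $Y = \mathbf{1}_A \overline{\operatorname{sign} Z}$ for arbitrary $A \in \mathcal F$ forces $Z = 0$ $P$-a.e. The substantive step is surjectivity: given $\varphi \in L_\sigma^p(P)^*$, I must produce $Z \in L^0(P)$ with $ZY \in L^1(P)$ for every $Y$ and $\varphi = \varphi_Z$. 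Here I would exploit the contractive embedding $L^\infty(P) \hookrightarrow L_\sigma^p(P)$: restricting $\varphi$ to $L^\infty(P)$ gives a bounded functional, and via the $L^p$-embedding one can first view $\varphi$ as acting on the dense subspace of simple functions (dense by Proposition~\ref{simple function dense}\ref{enu:p2}). Define a set function $\nu(A) := \varphi(\mathbf{1}_A)$; boundedness of $\varphi$ on $L^\infty(P)$ together with the estimate $\|\mathbf{1}_A\|_{\sigma,p} \le 1$ shows $\nu$ is finitely additive and absolutely continuous with respect to $P$, and countable additivity follows since $\|\mathbf{1}_{A_n}\|_{\sigma,p} \to 0$ when $P(A_n) \to 0$ (because $\|\mathbf{1}_{A_n}\|_{\sigma,p}^p = \int_{1-P(A_n)}^1 \sigma(u)\,du \to 0$ by~\eqref{alternative expression norm} and integrability of $\sigma$). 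The Radon\textendash Nikod\'ym theorem then furnishes $Z := \mathrm{d}\nu/\mathrm{d}P \in L^1(P)$ with $\varphi(Y) = \mathbb{E}(ZY)$ for simple $Y$.

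It remains to check that this $Z$ actually lies in $L_\sigma^p(P)^\times$ and that $\varphi = \varphi_Z$ on all of $L_\sigma^p(P)$. For the membership I would argue that for any $Y \in L_\sigma^p(P)$, approximating $Y$ by simple functions $Y_n$ (Proposition~\ref{simple function dense}\ref{enu:p2}) and using $|\mathbb{E}(ZY_n)| = |\varphi(Y_n)| \le \|\varphi\|_{\sigma,p}^* \|Y_n\|_{\sigma,p}$, one gets a uniform bound; combined with a truncation argument on $|Z|$ — testing $\varphi$ against $Y_n^{(m)} := Y_n \cdot \mathbf{1}_{\{|Z| \le m\}} \overline{\operatorname{sign}(ZY_n)}/|\cdots|$-type functions to control $\mathbb{E}(|ZY|\mathbf{1}_{\{|Z|\le m\}})$ — and monotone convergence, this yields $\mathbb{E}|ZY| < \infty$, i.e. $ZY \in L^1(P)$. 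Then $\varphi$ and $\varphi_Z$ agree on simple functions, both are continuous on $L_\sigma^p(P)$ (the latter by the already-established part of the proposition), and simple functions are dense, so $\varphi = \varphi_Z$ everywhere; surjectivity follows. The main obstacle I anticipate is the truncation/density bookkeeping in this last paragraph — namely passing from "agrees on simple functions" and "bounded on simple functions" to the genuine K\"othe-dual membership $ZY \in L^1$ for \emph{every} $Y \in L_\sigma^p(P)$ — since one cannot simply test $\varphi$ against $\overline{\operatorname{sign}(Z)}|Y|$ unless that function is already known to be in the space with controllable norm; the rearrangement-invariance from~\eqref{alternative expression norm} (so that $\||Y|\|_{\sigma,p} = \|Y\|_{\sigma,p}$) is what makes this work.
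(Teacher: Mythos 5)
Your proposal is correct, and for the substantive surjectivity step it follows essentially the same route as the paper: define $\mu(E):=\varphi(\1_E)$, verify $\sigma$-additivity and $P$-continuity from the bound $\Vert\1_E\Vert_{\sigma,p}^p=\int_{1-P(E)}^1\sigma(u)\,du$, invoke Radon--Nikod\'ym to get $Z\in L^1(P)$ representing $\varphi$ on simple functions, and then establish $Z\in L_\sigma^p(P)^\times$ by truncating on $E_m=\{|Z|\le m\}$, using the $L^\infty(P)$-module property to control $\Vert\1_{E_m}\overline{\operatorname{sign}(ZY)}\,Y\Vert_{\sigma,p}\le\Vert Y\Vert_{\sigma,p}$, and letting $m\to\infty$ by monotone convergence --- this is exactly the paper's bookkeeping, and the obstacle you flag (that one cannot test $\varphi$ directly against $\overline{\operatorname{sign}(Z)}|Y|$ before knowing $ZY\in L^1$) is precisely what the truncation is there to circumvent. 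The one genuinely different ingredient is your treatment of the automatic boundedness of $\varphi_Z$ for $Z\in L_\sigma^p(P)^\times$: you apply the closed graph theorem to $Y\mapsto ZY$ from $L_\sigma^p(P)$ into $L^1(P)$, with closedness of the graph obtained from the contractive embedding into $L^p(P)$ and almost-everywhere convergent subsequences, whereas the paper argues by contradiction with a hand-built gliding hump: from a sequence $(Y_k)$ in the unit ball with $|\mathbb{E}(ZY_k)|\ge k^2$ it forms $Y=\sum_k k^{-2}\1_{\{ZY_k\neq 0\}}\tfrac{\overline{ZY_k}}{|ZY_k|}Y_k$, convergent by completeness, and shows $\mathbb{E}(ZY)=\infty$ via monotone convergence, contradicting $ZY\in L^1(P)$. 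Both arguments rest on the same two facts (completeness of $L_\sigma^p(P)$ from Theorem~\ref{Banach space} and the contractive embedding into $L^p(P)$); yours is shorter and more conceptual at the cost of invoking the closed graph theorem, while the paper's is self-contained and elementary. One cosmetic point: in your closed-graph verification you should pass to a further subsequence of $(ZY_{n_k})$ to get almost-everywhere convergence to $W$ before identifying $W=ZY$; this is standard and does not affect correctness.
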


\begin{proof}
	Obviously, $\varphi_Z$ is a well-defined, linear functional on $L_\sigma^p(P)$ for every $Z\in L_\sigma^p(P)^\times$. The assumption
	\[\infty=\sup\{|\mathbb{E}(ZY)|;\,\Vert Y\Vert_{\sigma,p}\leq 1\}\]
	implies the existence of a sequence $(Y_k)_{k\in\mathbb{N}}$ in the unit ball of $L_\sigma^p(P)$ such that
	\[\forall\,k\in\mathbb{N}:\,k^2\leq |\mathbb{E}(ZY_k)|\leq \mathbb{E}(|ZY_k|).\]
	Because $\tilde{Y}_k:=\1_{\{ZY_k\neq 0\}}\frac{\overline{ZY_k}}{|Z\tilde{Y}_k|}Y_k$ belongs to the unit ball of $L_\sigma^p(P), k\in \mathbb{N}$ the completeness of $L_\sigma^p(P)$ implies that $(\sum_{k=1}^n\frac{1}{k^2}\tilde{Y}_k)_{n\in\mathbb{N}}$ converges in $L_\sigma^p(P)$ to some $Y$. As $Z\in L_\sigma^p(P)^\times$ it follows that $ZY\in L^1(P)$. 
	
	But on the other hand, since $L_\sigma^p(P)$ embeds contractively into $L^p(P)$ by Theorem~\ref{Banach space} it follows that some subsequence $(\sum_{k=1}^{n_l}\frac{1}{k^2}\tilde{Y}_k)_{l\in\mathbb{N}}$ also converges $P$-almost surely to $Y$. Therefore, $P$-almost surely we have 
	\begin{equation}\label{almost sure}
	ZY=Z\lim_{l\rightarrow\infty}(\sum_{k=1}^{n_l}\frac{1}{k^2}\tilde{Y}_k)=\lim_{l\rightarrow\infty}\sum_{k=1}^{n_l}\frac{1}{k^2}Z\tilde{Y}_k=\lim_{l\rightarrow\infty}\sum_{k=1}^{n_l}\frac{1}{k^2}|ZY_k|
	\end{equation}
	and by an application of the Monotone Convergence Theorem we conclude
	\begin{equation*}
	\mathbb{E}(ZY)=\mathbb{E}(\lim_{l\rightarrow\infty}\sum_{k=1}^{n_l}\frac{1}{k^2}|ZY_k|)=\lim_{l\rightarrow\infty}\sum_{k=1}^{n_l}\frac{1}{k^2}\mathbb{E}(|ZY_k|)\geq\lim_{l\rightarrow\infty}\sum_{k=1}^{n_l} 1
	\end{equation*}
	which contradicts $ZY\in L^1(P)$. Hence,
	\[\infty>\sup\{|\mathbb{E}(ZY)|;\,\Vert Y\Vert_{\sigma,p}\leq 1\}\]
	so that $\varphi_Z\in L_\sigma^p(P)^*$ with
	\begin{equation}\label{dual norm Koethe 2}
	\Vert\varphi_Z\Vert_{\sigma,p}^*=\sup\{|\mathbb{E}(ZY)|;\,\Vert Y\Vert_{\sigma,p}\leq 1\}.
	\end{equation}
	This implies that $\Phi$ is a well-defined linear mapping which satisfies~\eqref{dual norm Koethe}. In order to show that $\Phi$ is injective choose $Z\in L_\sigma^p(P)^\times$ with $\Phi(Z)=0$. We set $Y:=\1_{\{Z\neq 0\}}\frac{\overline{Z}}{|Z|}$. Since simple functions belong to $L_\sigma^p(P)$ it follows easily that $Y\in L_\sigma^p(P)$. It follows
	\[0=\Phi(Z)(Y)=\mathbb{E}(|Z|),\]
	so that $Z=0$.
	
	In order to prove surjectivity of $\Phi$ let $\varphi\in L_\sigma^p(P)$. For $E\in\mathcal{F}$ and $Y=\1_E$ we have $F_{|Y|}^{-1}=\1_{(1-P(E),1]}$ so that by~\eqref{alternative expression norm}
	\[|\varphi(\1_E)|\leq\Vert\varphi_{\sigma,p}^*\Vert\,\Vert\1_E\Vert_{\sigma,p}=\Vert\varphi\Vert_{\sigma,p}^*(\int_{1-P(E)}^1\sigma(u)du)^{1/p}.\]
	Using this inequality, it is straightforward to show that
	\[\mu:\mathcal{F}\rightarrow\mathbb{K},\ \mu(E):=\varphi(\1_E)\]
	is a complex measure which is $P$-continuous, i.e., $\mu(E)=0$ whenever $P(E)=0$. An application of the Radon-Nikod\'ym Theorem yields some $Z\in L^1(P)$ such that $\mu(E)=\int_\Omega\1_E ZdP=\mathbb{E}(Z\1_E)$ for all $E\in\mathcal{F}$. For simple functions $Y$ it follows $\varphi(Y)=\mathbb{E}(ZY)$. As soon as we have shown that $Z\in L_\sigma^p(P)^\times$ it follows from the above and Theorem~\ref{simple function dense} that $\varphi=\Phi(Z)$.
	
	In order to show $Z\in L_\sigma^p(P)^\times$ we first observe that $\alpha Y\in L_\sigma^p(P)$ and $\Vert\alpha Y\Vert_{\sigma,p}\leq\Vert\alpha\Vert_\infty\,\Vert Y\Vert_{\sigma,p}$ for every $Y\in L_\sigma^p(P)$ and each $\alpha\in L^\infty(P)$. Therefore, by setting $E_n:=\{|Z|\leq n\}, n\in\mathbb{N},$ we have $\Vert \1_{E_n}Y\Vert_{\sigma,p}\leq\Vert Y\Vert_{\sigma,p}$ for each $Y\in L_\sigma^p(P)$ which implies $\varphi_n\in L_\sigma^p(P)^*$ and $\Vert\varphi_n\Vert_{\sigma,p}^*\leq \Vert\varphi\Vert_{\sigma,p}^*$ where $\varphi_n(Y):=\varphi(\1_{E_n}Y)$. For simple functions $Y$ we have $\varphi_n(Y)=\mathbb{E}(Z\1_{E_n}Y)$. Additionally, by H\"older's inequality and Theorem~\ref{Banach space} we obtain for arbitary $Y\in L_\sigma^p(P)$
	\[\mathbb{E}(|Z\1_{E_n}Y|)\leq n\mathbb{E}(|Y|)\leq n\Vert Y\Vert_{\sigma,p},\]
	so that $Z\1_{E_n}\in L_\sigma^p(P)^\times$. Because simple functions are dense in $L_\sigma^p(P)$ by Theorem~\ref{simple function dense} we conclude from the above $\Phi(Z\1_{E_n})=\varphi_n$. Finally, since
	\begin{align*}
	\mathbb{E}(|Z\1_{E_n}Y|)&=|\varphi_n(\1_{\{ZY\neq 0\}}\frac{\overline{Z Y}}{|ZY|}Y)|\leq\Vert\varphi_n\Vert_{\sigma,p}^*\Vert\1_{\{ZY\neq 0\}}\frac{\overline{Z Y}}{|ZY|}Y\Vert_{\sigma,p}\\
	&\leq\Vert\varphi\Vert_{\sigma,p}\Vert Y\Vert_{\sigma,p}
	\end{align*}
	it follows with the aid of the Monotone Convergence Theorem that
	\[\mathbb{E}(|ZY|)=\lim_{n\rightarrow\infty}\mathbb{E}(|Z\1_{E_n}Y|)\leq \Vert\varphi\Vert_{\sigma,p}\Vert Y\Vert_{\sigma,p}\]
	for each $Y\in L_\sigma^p(P)$ so that $Z\in L_\sigma^p(P)^\times$.
\end{proof}

\begin{rem}\label{Koethe dual enthaelt betraege}
	With the aid of the fact that for $\alpha\in L^\infty(P)$ the linear mapping $Y\mapsto \alpha Y$ is well defined and continuous from $L_\sigma^p(P)$ into itself it is straightforward to see that $|Z|\in L_\sigma^p(P)^\times$ whenever $Z\in L_\sigma^p(P)^\times$ and that in this case $\Vert\varphi_Z\Vert_{\sigma,p}^*=\Vert\varphi_{|Z|}\Vert_{\sigma,p}^*$.
\end{rem}

We next aim at giving a representation of $L_\sigma^p(P)^\times$ and thus of the dual space of $L_\sigma^p(P)$. For this purpose we introduce the following notion.

\begin{defn}\label{S}
	For a distortion function $\sigma$ we define
	\[S_\sigma:=S:[0,1]\rightarrow\mathbb{R},\ S_\sigma(\alpha)=\int_\alpha^1 \sigma(u)du.\]
\end{defn}

\begin{rem}\label{properties of S}
	Obviously, $S$ is a continuous, nonincreasing function with $S(0)=1, S(1)=0$. If we set $u_0:=\inf\{u>0;\sigma(u)>0\}$ we have $u_0<1$, $S_{|[0,u_0]}=1$ and $S_{|[u_0,1]}$ is an increasing bijection from $[u_0,1]$ to $[0,1]$. By abuse of notation we denote the inverse of $S_{|[u_0,1]}$ by $S^{-1}$.
	
	For $\alpha_1$, $\alpha_2\in[0,1]$, $\alpha_1<\alpha_2$ and $\lambda\in(0,1)$ it follows from the fact that $\sigma$ is nondecreasing that
	\begin{align*}
	S(\lambda\alpha_1+(1-\lambda)\alpha_2)-S(\alpha_1)=&-\int_{\alpha_1}^{\lambda\alpha_1+(1-\lambda)\alpha_2}\sigma(u)du\\
	\geq& -\sigma(\lambda\alpha_1+(1-\lambda)\alpha_2)\,(1-\lambda)(\alpha_2-\alpha_1)
	\end{align*}
	and
	\begin{align*}
	S(\alpha_2)-S(\lambda\alpha_1+(1-\lambda)\alpha_2)=&-\int_{\lambda\alpha_1+(1-\lambda)\alpha_2}^{\alpha_2}\sigma(u)du\\
	\leq& -\sigma(\lambda\alpha_1+(1-\lambda)\alpha_2)\,\lambda(\alpha_2-\alpha_1)
	\end{align*}
	so that
	\begin{align*}
	\frac{S(\lambda\alpha_1+(1-\lambda)\alpha_2)-S(\alpha_1)}{(1-\lambda)(\alpha_2-\alpha_1)}&\geq-\sigma(\lambda\alpha_1+(1-\lambda)\alpha_2)\\
	&\geq\frac{S(\alpha_2)-S(\lambda\alpha_1+(1-\lambda)\alpha_2)}{\lambda(\alpha_2-\alpha_1)},
	\end{align*}
	which implies $S(\lambda\alpha_1+(1-\lambda)\alpha_2)\geq\lambda S(\alpha_1)+(1-\lambda)S(\alpha_2)$, i.e., $S$ is concave. In particular, $S$ is differentiable from the left and from the right on $(0,1]$, (on $[0,1)$, resp.)
	and since $\sigma$ is continuous from the left it is straightforward to show that for the left derivative we have $S_l'(\alpha)=-\sigma(u), u\in (0,1]$.
\end{rem}

Recall that for a non-negative random variable $Z$ the \textit{average value-at-risk} of level $\alpha\in[0,1)$ is defined as $\AVaR_\alpha(Z)=\frac{1}{1-\alpha}\int_\alpha^1 F_Z^{-1}(u)du$.

\begin{defn}
	For a distortion function $\sigma$, $Z\in L^0(P)$, and $\alpha\in[0,1)$ we define
	\begin{equation}\label{average value at risk norm}
	|Z|_{\sigma,\infty}^*:=\sup_{\alpha\in[0,1)}\frac{\AVaR_\alpha(|Z|)}{\frac{1}{1-\alpha}S_\sigma(\alpha)}\ (=\sup_{\alpha\in[0,1)}\frac{\int_\alpha^1 F_{|Z|}^{-1}(u)du}{\int_\alpha^1\sigma(u)du}).
	\end{equation}
	Moreover, we say that $Z^{\prime}\in L^0(P)$ \textit{$\sigma$\nobreakdash-dominates} $Z$ (in symbols $Z^{\prime}\:{}_{\sigma}\negthickspace\succcurlyeq Z$) if
	there is a uniform random variable $U\in\mathscr{U}(0,1)$ such that 
	\begin{equation}\label{eq:Order}
	\AVaR_{\alpha}\bigl(\sigma(U)|Z^{\prime}|\bigr)\ge\AVaR_{\alpha}(|Z|)\text{ for all }\alpha<1.
	\end{equation}
	Further we define, for $p\in (1,\infty)$,
	\begin{equation}\label{eq:20}
	|Z|_{\sigma,q}^{*}:=\inf\left\{ \left\Vert Z^{\prime}\right\Vert _{\sigma,q}:Z^{\prime}{}_{\sigma}\negthickspace\succcurlyeq Z \right\} 
	\end{equation}
	where $q\in (1,\infty)$ is the conjugate exponent to $p$, i.e., $1/p +1/q=1$ and where as usual $\inf\emptyset:=\infty$.
	
	Finally, for $p\in [1,\infty)$ with conjugate exponent $q$, i.e. $1/p+1/q=1$, we set $L_{\sigma,q}^*(P):=\{Z\in L^0(P);\,|Z|_{\sigma,q}^*<\infty\}$ (and we identify random variables which coincide $P$-almost everywhere).
\end{defn}

From the definition of quantile functions it follows for $Z_1$, $Z_2\in L^0(P)$ with $|Z_1|\leq |Z_2|$ that $F_{|Z_1|}^{-1}\leq F_{|Z_2|}^{-1}$ which implies $|Z_1|_{\sigma,q}^*\leq |Z_2|_{\sigma,q}^*$. Since also $F_{|\alpha Z_1|}^{-1}=|\alpha|F_{|Z_1|}^{-1}$ for $\alpha\in\mathbb{K}$ it follows also $|\alpha Z_1|_{\sigma,q}^*=|\alpha|\,|Z_1|_{\sigma,q}^*$. Since $\AVaR_\alpha$ is subadditive (cf.\ \citet{PflugRomisch2007}) it follows easily that $L_{\sigma,q}^*(P)$ is a subspace of $L^0(P)$.

\begin{rem}[Stochastic dominance of second order]
	
	The definition of $| \cdot|_{\sigma,\infty}^{*}$ reflects
	the duality of risk functionals. Indeed, the supremum~\eqref{average value at risk norm}
	can be restated as 
	\[
	| Z|_{\sigma,\infty}^{*}=\inf\left\{ \eta\geq 0:\AVaR_{\alpha}(| Z| )\leq\frac{\eta}{1-\alpha}\cdot\int_{\alpha}^{1}\sigma(u)\mathrm{d}u\text{ for all }\alpha<1\right\}.
	\]
	By the rearrangement inequality (cf.\ \citet[Theorem 5.25(2)]{McNeil2005}) this equivalent formulation involves the statement 
	\begin{equation}\label{eq:12}
	\AVaR_{\alpha}(|Z|)\le\AVaR_{\alpha}\bigl(\eta\,\sigma(U)\bigr),
	\end{equation}
	where $U\in\mathscr{U}(0,1)$. Choosing $U$ to be coupled in a comonotone way with $|Z|$ it follows
	\[| Z|_{\sigma,\infty}^{*}=\inf\left\{ \eta\geq 0, U\in\mathscr{U}(0,1):\AVaR_{\alpha}(| Z| )\leq\AVaR_{\alpha}\bigl(\eta\,\sigma(U)\bigr)\text{ for all }\alpha<1\right\} \]
	Following \citet{RuszOgryczak}, \eqref{eq:12} is equivalent to saying
	that $|Z|$ is dominated by $\left\Vert Z\right\Vert _{\sigma}^{*}\cdot\sigma(U)$
	in second stochastic order.\footnote{Cf.\ \citet{Dentcheva2003,DentchevaRusz2004,Ruszczynski2006a} for
		stochastic dominance of second order.} 
\end{rem}

\begin{rem}~
	\begin{enumerate}
		\item By the choice $\alpha=0$ in~\eqref{average value at risk norm} it follows that 
		\begin{equation}
		| Z|_{\sigma,\infty}^{*}\ge\AVaR_{0}(|Z|)=\int_{0}^{1}F_{|Z|}^{-1}(u)\mathrm{d}u=\mathbb{E}|Z| =\left\Vert Z\right\Vert _{1},\label{eq:12-2}
		\end{equation}
		so that $L_{\sigma,\infty}^*(P)\subseteq L^1(P)$.
		\item Since for $Z$, $Z^\prime\in L^0(P)$ with $Z^{\prime}{}_{\sigma}\negthickspace\succcurlyeq Z$ we have for $p\in[1,\infty)$ with Proposition~\ref{comparison of norms}
		\[\Vert Z\Vert_1=\int_0^1F_{|Z|}^{-1}(u)du\leq\int_0^1\sigma(u)F_{|Z^\prime|}^{-1}(u)du=\Vert Z^\prime\Vert_{\sigma,1}\leq\Vert Z^\prime\Vert_{\sigma,p}\]
		it also follows that $L_{\sigma,q}^*(P)\subseteq L^1(P)$.
	\end{enumerate}
\end{rem}

\begin{prop}\label{description avar norm}
	For $Z\in L^0(P)$ we have
	\begin{align*}
	| Z|_{\sigma,\infty}^*&=\inf\{\eta\geq 0;\,\forall\,F:[0,1)\rightarrow[0,\infty)\text{ nondecreasing:}\\
	& \int_0^1 F_{|Z|}^{-1}(u)F(u)du\leq \eta\int_0^1 \sigma(u) F(u)du\},
	\end{align*}
	and for $p\in (1,\infty)$
	\begin{align*}
	| Z|_{\sigma,q}^*&=\inf\{\Vert Z^{\prime}\Vert_{\sigma,q};\,Z^{\prime}\in L^0(P)\text{ such that }\forall\,F:[0,1)\rightarrow[0,\infty)\text{ nondecreasing:}\\
	& \int_0^1 F_{|Z|}^{-1}(u)F(u)du\leq \int_0^1 \sigma(u) F_{|Z^\prime|}^{-1}(u)F(u)du\},
	\end{align*}
	where as usual $1/p+1/q=1$.
\end{prop}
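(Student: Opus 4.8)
The plan is to reduce both formulae to one elementary fact: for measurable $g,h\colon[0,1)\to[0,\infty)$,
\[
\int_\alpha^1 g(u)\,\mathrm{d}u\le\int_\alpha^1 h(u)\,\mathrm{d}u\ \text{ for all }\alpha\in[0,1)
\]
holds if and only if $\int_0^1 g(u)F(u)\,\mathrm{d}u\le\int_0^1 h(u)F(u)\,\mathrm{d}u$ for every nondecreasing $F\colon[0,1)\to[0,\infty)$. The direction ``$\Leftarrow$'' is immediate on testing with $F=\1_{(\alpha,1)}$ (which is nondecreasing and nonnegative) and with $F\equiv 1$. For ``$\Rightarrow$'' I would represent a nondecreasing $F\ge 0$ as the layer-cake superposition $F(u)=F(0^{+})+\int_{(0,1)}\1_{(\alpha,1)}(u)\,\mathrm{d}\nu_F(\alpha)$, where $\nu_F$ is the Lebesgue--Stieltjes measure of $F$ on $(0,1)$ (possibly of infinite total mass when $F$ blows up near $1$), substitute it into $\int_0^1 gF$, interchange the order of integration by Tonelli, estimate each inner integral $\int_\alpha^1 g\le\int_\alpha^1 h$, and run the computation backwards to arrive at $\int_0^1 hF$. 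Since all integrands are nonnegative there is nothing to prove when the right-hand side is infinite, and no integrability hypothesis on $F$, $g$, $h$ is needed.

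For the first identity, recall $\AVaR_\alpha(|Z|)=\frac{1}{1-\alpha}\int_\alpha^1 F_{|Z|}^{-1}(u)\,\mathrm{d}u$ and $S_\sigma(\alpha)=\int_\alpha^1\sigma(u)\,\mathrm{d}u$, so the defining condition behind~\eqref{average value at risk norm}, namely ``$\AVaR_\alpha(|Z|)\le\frac{\eta}{1-\alpha}S_\sigma(\alpha)$ for all $\alpha<1$'', is verbatim ``$\int_\alpha^1 F_{|Z|}^{-1}\le\eta\int_\alpha^1\sigma$ for all $\alpha<1$''. Applying the equivalence above with $g=F_{|Z|}^{-1}$ and $h=\eta\,\sigma$ shows that the set of admissible $\eta$ is unchanged if one replaces the family of tail inequalities by ``$\int_0^1 F_{|Z|}^{-1}F\le\eta\int_0^1\sigma F$ for every nondecreasing $F\ge0$'', and taking infima over $\eta$ gives the asserted description of $|Z|_{\sigma,\infty}^*$.

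For the second identity I would first translate $\sigma$-domination into a quantile inequality. Fix $Z'\in L^0(P)$. Using the standing assumption from Remark~\ref{rem:Extension}, choose $U\in\mathscr{U}(0,1)$ comonotone with $|Z'|$; then $\sigma(U)$ is comonotone with $|Z'|$, so $F_{\sigma(U)|Z'|}^{-1}(u)=\sigma(u)F_{|Z'|}^{-1}(u)$ for almost every $u$, hence $\AVaR_\alpha(\sigma(U)|Z'|)=\frac{1}{1-\alpha}\int_\alpha^1\sigma(u)F_{|Z'|}^{-1}(u)\,\mathrm{d}u$, while the rearrangement inequality (\citet[Theorem 5.25(2)]{McNeil2005}) yields $\AVaR_\alpha(\sigma(U')|Z'|)\le\frac{1}{1-\alpha}\int_\alpha^1\sigma(u)F_{|Z'|}^{-1}(u)\,\mathrm{d}u$ for every $U'\in\mathscr{U}(0,1)$. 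Consequently $Z'$ $\sigma$-dominates $Z$ (i.e.\ $Z'\,{}_{\sigma}\negthickspace\succcurlyeq Z$) exactly when $\int_\alpha^1 F_{|Z|}^{-1}(u)\,\mathrm{d}u\le\int_\alpha^1\sigma(u)F_{|Z'|}^{-1}(u)\,\mathrm{d}u$ for all $\alpha<1$. Invoking the equivalence from the first paragraph with $g=F_{|Z|}^{-1}$ and $h=\sigma\,F_{|Z'|}^{-1}$ rewrites this as ``$\int_0^1 F_{|Z|}^{-1}F\le\int_0^1\sigma F_{|Z'|}^{-1}F$ for every nondecreasing $F\ge0$'', and since this set of $Z'$ coincides with the set over which the infimum in~\eqref{eq:20} is taken, passing to the infimum of $\Vert Z'\Vert_{\sigma,q}$ delivers the stated formula for $|Z|_{\sigma,q}^*$.

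I expect the layer-cake step in the first paragraph to be the only place needing real care: one must verify that the representation of a nondecreasing nonnegative $F$ as a superposition of the indicators $\1_{(\alpha,1)}$ and a constant holds Lebesgue-almost everywhere even for unbounded $F$ (so that $\nu_F$ may be an infinite measure near $1$), and one must keep all integrands nonnegative so that Tonelli applies unconditionally and the case of an infinite right-hand side is handled automatically. The comonotone identity $F_{\sigma(U)|Z'|}^{-1}=\sigma\cdot F_{|Z'|}^{-1}$ and the fact that a comonotone coupling maximises $\AVaR_\alpha$ of a product of nonnegative random variables simultaneously in $\alpha$ are standard consequences of the rearrangement inequality already used throughout the paper and need no separate proof.
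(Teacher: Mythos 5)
Your argument is correct and follows essentially the same route as the paper: both rest on the observation that a nonnegative nondecreasing $F$ is a superposition of tail indicators $\1_{(\alpha,1)}$, so that the family of tail inequalities indexed by $\alpha$ is equivalent to the family of weighted inequalities indexed by $F$ (you realize this via a Lebesgue--Stieltjes/Tonelli layer-cake identity, the paper via monotone approximation by step functions and the Monotone Convergence Theorem --- an immaterial difference). Your additional explicit reduction of $\sigma$-dominance to the tail-quantile inequality $\int_\alpha^1 F_{|Z|}^{-1}\le\int_\alpha^1\sigma F_{|Z'|}^{-1}$, using that the comonotone coupling maximizes $\AVaR_\alpha$ of the product simultaneously in $\alpha$, is a correct (and welcome) elaboration of what the paper dispatches with ``mutatis mutandis''.
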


\begin{proof}
	Since $\1_{[\alpha,1]}$ is a nondecreasing, non-negative function for every $\alpha\in[0,1)$ it follows
	\begin{align*}
	\{\eta\geq 0&;\,\forall\,F:[0,1)\rightarrow[0,\infty)\text{ nondecreasing:} \int_0^1 F_{|Z|}^{-1}(u)F(u)du\leq \eta\int_0^1 \sigma(u) F(u)du\}\\
	&\subseteq \{\eta\geq 0;\,\forall\,\alpha\in[0,1):\, \int_\alpha^1 F_{|Z|}^{-1}(u)du\leq \eta\int_\alpha^1 \sigma(u)du\}.
	\end{align*}
	On the other hand, if for some $\eta\geq 0$ we have
	\[\forall\,\alpha\in[0,1):\,\int_\alpha^1 F_{|Z|}^{-1}(u)du\leq \eta\int_\alpha^1 \sigma(u)du\]
	it follows
	for all $\gamma_1,\ldots,\gamma_n\in[0,\infty)$ and every choice of $\alpha_1<\ldots<\alpha_n\in[0,1)$ that
	\[\int_0^1 F_{|Z|}^{-1}(u)\sum_{j=1}^n\gamma_j\1_{[\alpha_j,1]}(u)du\leq \eta\int_0^1 \sigma(u)\sum_{j=1}^n\gamma_j\1_{[\alpha_j,1]}(u)du.\]
	Since every non-negative, nondecreasing function $F:[0,1)\rightarrow[0,\infty)$ is the pointwise limit of a nondecreasing sequence of such step functions $\sum_{j=1}^n\gamma_j\1_{[\alpha_j,1]}$ it follows from the Monotone Convergence Theorem that
	\[\int_0^1 F_{|Z|}^{-1}(u)F(u)du\leq \eta\int_0^1 \sigma(u) F(u)du\]
	for all such $F$. Hence it also holds
	\begin{align*}
	&\{\eta\geq 0;\,\forall\,F:[0,1)\rightarrow[0,\infty)\text{ nondecreasing:} \int_0^1 F_{|Z|}^{-1}(u)F(u)du\leq \eta\int_0^1 \sigma(u) F(u)du\}\\
	&\supseteq \{\eta\geq 0;\,\forall\,\alpha\in[0,1):\, \int_\alpha^1 F_{|Z|}^{-1}(u)du\leq \eta\int_\alpha^1 \sigma(u)du\}
	\end{align*}
	which proves the first claim. The rest of the proposition is proved mutatis muntandis.
\end{proof}

\begin{prop}\label{one part of duality}
	For a distortion function $\sigma$ and $p\in [1,\infty)$ we have $L_{\sigma,q}^*(P)\subseteq L_\sigma^p(P)^\times$ and
	\begin{equation}\label{isometry part 1}
	\sup\{|\mathbb{E}(ZY)|;\,Y\in L_\sigma^p(P),\Vert Y\Vert_{\sigma,p}\leq 1\}\leq |Z|_{\sigma,q}^*
	\end{equation}
	for every $Z\in L_{\sigma,q}^*(P)$, where $q$ is the conjugate exponent to $p$.
\end{prop}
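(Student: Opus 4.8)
The plan is to estimate $|\mathbb{E}(ZY)|$ for $Z\in L_{\sigma,q}^{*}(P)$ and $Y$ in the unit ball of $L_\sigma^p(P)$ by first passing to quantile functions via the rearrangement inequality, and then feeding the nonnegative, nondecreasing function $F_{|Y|}^{-1}$ into the ``test-function'' description of $|Z|_{\sigma,q}^{*}$ supplied by Proposition~\ref{description avar norm}. To begin with, the rearrangement inequality (\citet[Theorem 5.25(2)]{McNeil2005}) gives
\[
|\mathbb{E}(ZY)|\le\mathbb{E}\bigl(|Z|\,|Y|\bigr)\le\int_0^1 F_{|Z|}^{-1}(u)\,F_{|Y|}^{-1}(u)\,\mathrm{d}u,
\]
and $F_{|Y|}^{-1}\colon[0,1)\to[0,\infty)$ is nondecreasing; so everything reduces to bounding $\int_0^1 F_{|Z|}^{-1}F_{|Y|}^{-1}\,\mathrm{d}u$ by $|Z|_{\sigma,q}^{*}$, which is finite because $Z\in L_{\sigma,q}^{*}(P)$.

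For $p\in(1,\infty)$ I would fix $c>|Z|_{\sigma,q}^{*}$ and use Proposition~\ref{description avar norm} to produce $Z^{\prime}\in L^0(P)$ with $\|Z^{\prime}\|_{\sigma,q}\le c$ and $\int_0^1 F_{|Z|}^{-1}(u)F(u)\,\mathrm{d}u\le\int_0^1\sigma(u)F_{|Z^{\prime}|}^{-1}(u)F(u)\,\mathrm{d}u$ for every nonnegative, nondecreasing $F$. Taking $F=F_{|Y|}^{-1}$, writing $\sigma F_{|Z^{\prime}|}^{-1}F_{|Y|}^{-1}=\bigl(\sigma^{1/q}F_{|Z^{\prime}|}^{-1}\bigr)\bigl(\sigma^{1/p}F_{|Y|}^{-1}\bigr)$, and applying H\"older's inequality with exponents $q$ and $p$ together with the identity~\eqref{alternative expression norm} for $\|\cdot\|_{\sigma,q}$ and $\|\cdot\|_{\sigma,p}$ yields $\int_0^1 F_{|Z|}^{-1}F_{|Y|}^{-1}\,\mathrm{d}u\le\|Z^{\prime}\|_{\sigma,q}\,\|Y\|_{\sigma,p}\le c$. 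For $p=1$ (hence $q=\infty$) the same scheme is even shorter: for $c>|Z|_{\sigma,\infty}^{*}$, Proposition~\ref{description avar norm} directly yields $\int_0^1 F_{|Z|}^{-1}F\,\mathrm{d}u\le c\int_0^1\sigma F\,\mathrm{d}u$ for all nonnegative, nondecreasing $F$, and with $F=F_{|Y|}^{-1}$ and~\eqref{alternative expression norm} this reads $\int_0^1 F_{|Z|}^{-1}F_{|Y|}^{-1}\,\mathrm{d}u\le c\,\|Y\|_{\sigma,1}\le c$.

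In both cases $\int_0^1 F_{|Z|}^{-1}F_{|Y|}^{-1}\,\mathrm{d}u\le c$, hence $\mathbb{E}(|ZY|)\le c$, for all $c>|Z|_{\sigma,q}^{*}$ and all $Y$ in the unit ball; letting $c\downarrow|Z|_{\sigma,q}^{*}$ and rescaling $Y$ gives $\mathbb{E}(|ZY|)\le|Z|_{\sigma,q}^{*}\,\|Y\|_{\sigma,p}<\infty$ for every $Y\in L_\sigma^p(P)$, which is precisely $Z\in L_\sigma^p(P)^{\times}$, while taking the supremum over the unit ball is exactly~\eqref{isometry part 1}. I do not anticipate a genuine obstacle here; the argument is bookkeeping around Proposition~\ref{description avar norm}. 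The step most likely to need care is the split into $p=1$ and $p>1$ (the former not being phrased through a dominating $Z^{\prime}$), together with the fact that $|Z|_{\sigma,q}^{*}$ is an infimum, so that $Z^{\prime}$ — respectively the constant $c$ — must be chosen only $c$-close and the final bound obtained by passing $c\downarrow|Z|_{\sigma,q}^{*}$; one should also remember that it is the finiteness of $\int_0^1 F_{|Z|}^{-1}F_{|Y|}^{-1}\,\mathrm{d}u$, combined once more with the rearrangement inequality, that actually delivers $ZY\in L^1(P)$.
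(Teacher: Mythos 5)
Your proposal is correct and follows essentially the same route as the paper: the rearrangement inequality reduces everything to $\int_0^1 F_{|Z|}^{-1}F_{|Y|}^{-1}\,\mathrm{d}u$, which for $p=1$ is bounded directly via the $\eta$-formulation of Proposition~\ref{description avar norm} and identity~\eqref{alternative expression norm}, and for $p>1$ via a dominating $Z'$ together with H\"older's inequality against the weight $\sigma$. The only cosmetic difference is that you handle the infimum by choosing $Z'$ with $\Vert Z'\Vert_{\sigma,q}\le c$ and letting $c\downarrow|Z|_{\sigma,q}^{*}$, where the paper simply notes that $Z'$ was arbitrary among the dominating variables.
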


\begin{proof}
	Let $p=1$. For $Z\in L_{\sigma,\infty}^*(P)$ it follows for arbitrary $Y\in L_\sigma^1(P)$ from the rearrangement inequality combined with Proposition~\ref{description avar norm}
	\begin{align*}
	\mathbb{E}(|ZY|)&\leq\int_0^1 F_{|Z|}^{-1}(u) F_{|Y|}^{-1}(u)du\leq | Z|_{\sigma,\infty}^*\int_0^1\sigma(u)F_{|Y|}^{-1}(u) du\\
	&=| Z|_{\sigma,\infty}^* \Vert Y\Vert_{\sigma,1}.
	\end{align*}
	Hence, $Z\in L_\sigma^1(P)^\times$ and the above inequality also implies that $| Z|_{\sigma,\infty}^*$ is an upper bound for $\sup\{|\mathbb{E}(ZY)|;\,Y\in L_\sigma^1(P), \Vert Y\Vert_{\sigma,1}\leq 1\}.$
	
	Next let $p\in (1,\infty)$ and $q$ the corresponding conjugate exponent. For $Z\in L_{\sigma,q}^*(P)$ let $Z^\prime\in L_\sigma^q(P)$ with $Z^{\prime}{}_{\sigma}\negthickspace\succcurlyeq Z$. For arbitrary $Y\in L_\sigma^1(P)$ it follows from the rearrangement inequality combined with Proposition~\ref{description avar norm} and H\"older's inequality
	\begin{align*}
	\mathbb{E}(|ZY|)&\leq\int_0^1 F_{|Z|}^{-1}(u) F_{|Y|}^{-1}(u)du\leq \int_0^1\sigma(u)F_{|Z^\prime|}^{-1}(u)F_{|Y|}^{-1}(u) du\\
	&\leq\Vert Z^\prime\Vert_{\sigma,q} \Vert Y\Vert_{\sigma,p}.
	\end{align*}
	Thus, $Z\in L_\sigma^p(P)^\times$ and because $Z^\prime\in L_\sigma^q(P)$ with $Z^{\prime}{}_{\sigma}\negthickspace\succcurlyeq Z$ was chosen arbitrarily, it follows that $\sup\{|\mathbb{E}(ZY)|;\,Y\in L_\sigma^1(P), \Vert Y\Vert_{\sigma,p}\leq 1\}$ is bounded by $|Z|_{\sigma,q}^*$.
\end{proof}

In order to show that in fact $L_{\sigma,q}^*(P)=L_\sigma^p(P)^\times$ holds as well as equality in inequality~\eqref{isometry part 1} we have to distinguish the cases $p=1$ and $p\in (1,\infty)$. We begin with the case $p=1$.

\begin{prop}\label{avar equals expectation}
	For a $\mathbb{K}$-valued random variable $Z$ on $(\Omega,\mathcal{F},P)$ and $\alpha\in[0,1)$ there is $E_\alpha\in\mathcal{F}$ such that $P(E_\alpha)=1-\alpha$ and
	\[\AVaR_\alpha(|Z|)=\frac{1}{1-\alpha}\mathbb{E}(|Z|\1_{E_\alpha}).\]
\end{prop}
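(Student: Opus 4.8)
The plan is to reduce the statement to a classical fact about quantile functions of real random variables and then transfer it back to the abstract probability space using the assumption (Remark~\ref{rem:Extension}) that $(\Omega,\mathcal{F},P)$ is rich enough to carry a uniform random variable. First I would recall that for a non-negative random variable $W:=|Z|$ one has the well-known identity $\AVaR_\alpha(W)=\frac{1}{1-\alpha}\int_\alpha^1 F_W^{-1}(u)\,\mathrm{d}u$, and that this average-value-at-risk can be realized as a genuine conditional expectation on the ``upper tail'' event $\{W\ge F_W^{-1}(\alpha)\}$, at least when $P(W=F_W^{-1}(\alpha))=0$; in the atom case one has to split the atom, which is exactly where the extra randomness is needed.

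\textbf{Key steps.} The first step is to fix $\alpha\in[0,1)$ and let $q_\alpha:=F_{|Z|}^{-1}(\alpha)$ be the $\alpha$-quantile. Set $A:=\{|Z|>q_\alpha\}$ and $B:=\{|Z|=q_\alpha\}$. Then $P(A)\le 1-\alpha\le P(A)+P(B)$, so one can choose $B_0\subseteq B$ with $P(B_0)=(1-\alpha)-P(A)$ — and here is the only subtlety: such a $B_0$ exists because, by the general assumption on $(\Omega,\mathcal{F},P)$ and the uniform random variable $U\in\mathscr{U}(0,1)$ it carries, every event of positive probability can be split into pieces of arbitrary prescribed smaller probability (take $B\cap\{U\le t\}$ for suitable $t$, noting $U$ is independent-enough or at least that $P(B\cap\{U\le t\})$ is continuous in $t$; more carefully, use $U$ restricted to $B$ together with a standard argument that conditionally on $B$ one may again find a uniform variable after enlarging the space as in Remark~\ref{rem:Extension}). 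Put $E_\alpha:=A\cup B_0$; then $P(E_\alpha)=1-\alpha$ by construction.

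\textbf{Verifying the identity.} The second step is to compute $\mathbb{E}(|Z|\1_{E_\alpha})$ and match it with $\int_\alpha^1 F_{|Z|}^{-1}(u)\,\mathrm{d}u$. On $E_\alpha=A\cup B_0$ we have $|Z|\ge q_\alpha$, and on $E_\alpha^c$ we have $|Z|\le q_\alpha$; moreover $F_{|Z|}^{-1}(u)=q_\alpha$ for $u$ in the (possibly degenerate) interval corresponding to the atom at $q_\alpha$ and beyond which the quantile exceeds $q_\alpha$. A direct bookkeeping argument — writing $\mathbb{E}(|Z|\1_A)$ as $\int$ over the part of $[0,1]$ where the quantile strictly exceeds $q_\alpha$, and $\mathbb{E}(|Z|\1_{B_0})=q_\alpha P(B_0)$ as the remaining sliver of the integral $\int_\alpha^1 F_{|Z|}^{-1}$ — gives $\mathbb{E}(|Z|\1_{E_\alpha})=\int_\alpha^1 F_{|Z|}^{-1}(u)\,\mathrm{d}u=(1-\alpha)\AVaR_\alpha(|Z|)$, which is the claim. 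The main obstacle is genuinely only the atom-splitting in Step 1; once $E_\alpha$ with the correct probability and the correct ``tail'' shape is in hand, the identity is a routine computation with the layer-cake/quantile representation of the integral. The edge case $\alpha=0$ is immediate with $E_0=\Omega$, and the case where $F_{|Z|}^{-1}$ is continuous at $\alpha$ (no atom) is the textbook situation $E_\alpha=\{|Z|\ge q_\alpha\}$.
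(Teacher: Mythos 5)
Your proposal is correct and follows essentially the same route as the paper: both construct $E_\alpha$ as the strict upper level set $\{|Z|>F_{|Z|}^{-1}(\alpha)\}$ together with a piece of the atom $\{|Z|=F_{|Z|}^{-1}(\alpha)\}$ cut out by the uniform variable $U$ via an intermediate value argument (the continuity of $t\mapsto P(B\cap\{U\le t\})$ is automatic since $U$ is atomless, so no independence is needed), and then verify the identity by a routine quantile computation. The paper organizes that last computation slightly differently, proving the identity $\AVaR_\alpha(|Z|)=\frac{1}{1-\alpha}\mathbb{E}(|Z|\1_E)+\bigl(1-\frac{1}{1-\alpha}P(E)\bigr)F_{|Z|}^{-1}(\alpha)$ for every admissible $E$ via the positive-part representation, but this is only a cosmetic difference from your direct bookkeeping.
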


\begin{proof}
	Let $E\in\mathcal{F}$ with
	\[\{|Z|>F_{|Z|}^{-1}(\alpha)\}\subseteq E\subseteq\{|Z|\geq F_{|Z|}(\alpha)\}\]
	be arbitrary. Denoting the positive part of an $\mathbb{R}$-valued function $f$ as usual by $f_+$ it follows
	\begin{align}\label{avar equality}
	\AVaR_\alpha(|Z|)&=\frac{1}{1-\alpha}\int_\alpha^1 F_{|Z|}^{-1}(u)du\nonumber\\
	&=F_{|Z|}^{-1}(\alpha)+\frac{1}{1-\alpha}\int_0^1(F_{|Z|}^{-1}(u)-F_{|Z|}^{-1}(\alpha))_+du\nonumber\\
	&=F_{|Z|}^{-1}(\alpha)+\frac{1}{1-\alpha}\mathbb{E}\bigl((|Z|-F_{|Z|}^{-1}(\alpha))\1_E\bigr)\\
	&=F_{|Z|}^{-1}(\alpha)+\frac{1}{1-\alpha}\mathbb{E}(|Z|\1_E)-\frac{1}{1-\alpha}F_{|Z|}^{-1}(\alpha)P(E)\nonumber\\
	&=\frac{1}{1-\alpha}\mathbb{E}(|Z|\1_E)+(1-\frac{1}{1-\alpha}P(E))F_{|Z|}^{-1}(\alpha).\nonumber
	\end{align}
	From the definition of $F_{|Z|}^{-1}$ it follows immediately that
	\[P(|Z|<F_{|Z|}^{-1}(\alpha))\leq\alpha\leq P(|Z|\leq F_{|Z|}^{-1}(\alpha))\]
	so that
	\begin{equation*}
	P(|Z|>F_{|Z|}^{-1}(\alpha))\leq 1-\alpha\text{ and }P(|Z|\geq F_{|Z|}^{-1}(\alpha))\geq 1-\alpha.
	\end{equation*}
	Let $U$ be a $[0,1]$-valued, uniformly distributed random variable on $(\Omega,\mathcal{F},P)$. We define
	\[E_\beta:=\{|Z|>F_{|Z|}^{-1}(\alpha)\}\cup\Bigl(\{|Z|=F_{|Z|}^{-1}(\alpha)\}\cap\{U\in [0,\beta]\}\Bigr)\]
	for $\beta\in[0,1]$ and set
	\[f:[0,1]\rightarrow[0,1], f(\beta):=P(E_\beta).\]
	From the properties of a probability measure it follows easily that $f$ is continuous as well as
	\[f(0)=P(|Z|>F_{|Z|}^{-1}(\alpha))\leq 1-\alpha\text{ and }f(1)=P(|Z|\geq F_{|Z|}^{-1}(\alpha))\geq 1-\alpha.\]
	Hence, there is $\beta_0\in [0,1]$ such that for $E_{\beta_0}$ we have $P(E_{\beta_0})=1-\alpha$ and it follows from~\eqref{avar equality} that $E_{\beta_0}$ has the desired property.
\end{proof}

For the case $p=1$ we can now give the desired intrinsic description of $L_\sigma^1(P)^\times$.

\begin{prop}\label{representation Koethe dual p=1}
	For a distortion function $\sigma$ it holds $L_\sigma^1(P)^\times=L_{\sigma,\infty}^*(P)$
	and for every $Z\in L_\sigma^1(P)^\times$ we have
	\[| Z|_{\sigma,\infty}^*=\sup\{|\mathbb{E}(ZY)|;\,Y\in L_\sigma^1(P), \Vert Y\Vert_{\sigma,1}\leq 1\}.\]
\end{prop}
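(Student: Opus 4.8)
The plan is to bootstrap from Proposition~\ref{one part of duality} applied with $p=1$ (so that $q=\infty$): there we already have $L_{\sigma,\infty}^*(P)\subseteq L_\sigma^1(P)^\times$ together with inequality~\eqref{isometry part 1}, which for $p=1$ reads $\sup\{|\mathbb{E}(ZY)|;\,Y\in L_\sigma^1(P),\,\Vert Y\Vert_{\sigma,1}\leq 1\}\leq|Z|_{\sigma,\infty}^*$ for every $Z\in L_{\sigma,\infty}^*(P)$. Hence the whole statement follows once we establish the reverse inequality
\[
|Z|_{\sigma,\infty}^*\leq\sup\{|\mathbb{E}(ZY)|;\,Y\in L_\sigma^1(P),\,\Vert Y\Vert_{\sigma,1}\leq 1\}
\]
for an arbitrary $Z\in L_\sigma^1(P)^\times$: by Proposition~\ref{Koethe dual} the right-hand side equals $\Vert\varphi_Z\Vert_{\sigma,1}^*$, which is finite, so this inequality simultaneously forces $Z\in L_{\sigma,\infty}^*(P)$ and yields equality throughout.

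To prove the reverse inequality I would fix $\alpha\in[0,1)$. By Remark~\ref{properties of S} the number $S_\sigma(\alpha)=\int_\alpha^1\sigma(u)\,du$ is strictly positive (since $u_0<1$), and as $Z\in L_\sigma^1(P)^\times\subseteq L^1(P)$ the integral $\int_\alpha^1 F_{|Z|}^{-1}(u)\,du$ is finite. Proposition~\ref{avar equals expectation} provides a set $E_\alpha\in\mathcal{F}$ with $P(E_\alpha)=1-\alpha$ and $\mathbb{E}(|Z|\1_{E_\alpha})=(1-\alpha)\AVaR_\alpha(|Z|)=\int_\alpha^1 F_{|Z|}^{-1}(u)\,du$. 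Put
\[
Y:=\frac{1}{S_\sigma(\alpha)}\,\1_{E_\alpha}\1_{\{Z\neq 0\}}\,\frac{\overline{Z}}{|Z|},
\]
a bounded random variable, hence an element of $L_\sigma^1(P)$ by part~\ref{enu:p1} of Proposition~\ref{simple function dense}. Since $|Y|\leq\frac{1}{S_\sigma(\alpha)}\1_{E_\alpha}$ with $P(E_\alpha)=1-\alpha$, monotonicity of quantile functions gives $F_{|Y|}^{-1}\leq\frac{1}{S_\sigma(\alpha)}\1_{(\alpha,1]}$, whence by~\eqref{alternative expression norm}
\[
\Vert Y\Vert_{\sigma,1}=\int_0^1\sigma(u)F_{|Y|}^{-1}(u)\,du\leq\frac{1}{S_\sigma(\alpha)}\int_\alpha^1\sigma(u)\,du=1 .
\]
On the other hand, because $Z\,\overline{Z}=|Z|^2$ and $|Z|\1_{\{Z=0\}}=0$,
\[
\mathbb{E}(ZY)=\frac{1}{S_\sigma(\alpha)}\,\mathbb{E}(|Z|\1_{E_\alpha})=\frac{\int_\alpha^1 F_{|Z|}^{-1}(u)\,du}{\int_\alpha^1\sigma(u)\,du}.
\]

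Consequently
\[
\sup\{|\mathbb{E}(ZY)|;\,Y\in L_\sigma^1(P),\,\Vert Y\Vert_{\sigma,1}\leq 1\}\geq\frac{\int_\alpha^1 F_{|Z|}^{-1}(u)\,du}{\int_\alpha^1\sigma(u)\,du}
\]
for every $\alpha\in[0,1)$; taking the supremum over $\alpha$ and recalling the defining formula~\eqref{average value at risk norm} for $|Z|_{\sigma,\infty}^*$ gives the missing inequality, and with it the proposition. I expect the only point needing genuine care to be the verification $\Vert Y\Vert_{\sigma,1}\leq 1$, which rests on correctly identifying the quantile function of the indicator-type variable $|Y|$ and on $S_\sigma(\alpha)>0$ for $\alpha<1$; everything else is bookkeeping with the results already established, in particular with the near-optimal set $E_\alpha$ supplied by Proposition~\ref{avar equals expectation}.
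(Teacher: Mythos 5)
Your proposal is correct and follows essentially the same route as the paper: both directions rest on Proposition~\ref{one part of duality} for one inequality and on testing $\varphi_Z$ against the (normalized) element $\frac{1}{S_\sigma(\alpha)}\1_{E_\alpha}\1_{\{Z\neq 0\}}\frac{\overline{Z}}{|Z|}$ built from the set $E_\alpha$ of Proposition~\ref{avar equals expectation} for the other. The paper phrases the key step as $\AVaR_\alpha(|Z|)\le\Vert\varphi_Z\Vert_{\sigma,1}^*\frac{1}{1-\alpha}\Vert\1_{E_\alpha}\Vert_{\sigma,1}$ rather than normalizing the test function first, but the computation is identical.
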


\begin{proof}
	Let $Z\in L_\sigma^1(P)^\times$. By Proposition~\ref{avar equals expectation}, for any $\alpha\in[0,1)$ there is $E_\alpha\in\mathcal{F}$ such that $\AVaR_\alpha(|Z|)=\frac{1}{1-\alpha}\mathbb{E}(|Z|\1_{E_\alpha})$ and $P(E_\alpha)=1-\alpha$. Employing the notation from Proposition~\ref{Koethe dual} we obtain
	\begin{align*}
	\AVaR_\alpha(|Z|)&=|\varphi_Z(\frac{1}{1-\alpha}\1_{\{Z\neq 0\}}\frac{\overline{Z}}{|Z|}\1_{E_\alpha})|\leq\Vert\varphi_Z\Vert_{\sigma,1}^*\frac{1}{1-\alpha}\Vert \1_{E_\alpha}\Vert_{\sigma,1}\\
	&=\Vert\varphi_Z\Vert_{\sigma,1}^*\frac{1}{1-\alpha}\int_0^1\sigma(u)F_{\1_{E_\alpha}}^{-1}(u)du\\
	&=\Vert\varphi_Z\Vert_{\sigma,1}^*\frac{1}{1-\alpha}\int_{1-P(E_\alpha)}^1\sigma(u)du=\Vert\varphi_Z\Vert_{\sigma,1}^*\frac{1}{1-\alpha}S_\sigma(\alpha),
	\end{align*}
	so that $| Z|_{\sigma,\infty}^*$ is finite and bounded above by \[\Vert\varphi_Z\Vert_{\sigma,1}^*=\sup\{|\mathbb{E}(ZY)|;\,Y\in L_\sigma^1(P), \Vert Y\Vert_{\sigma,1}\leq 1\}.\]
	Proposition~\ref{one part of duality} now yields the rest of the claim.
\end{proof}

Combining Propositions~\ref{Koethe dual} and~\ref{representation Koethe dual p=1} we immediately derive the next result.

\begin{thm}\label{dual representation scalar case p=1}
	Let $\sigma$ be a distortion function. Then $|\cdot|_{\sigma,\infty}^*$ is a norm on $L_{\sigma,\infty}^*(P)$ turning it into a Banach space. Moreover,
	\[\Phi:(L_{\sigma,\infty}^*(P),|\cdot|_{\sigma,\infty}^*)\rightarrow(L_\sigma^1(P)^*,\Vert\cdot\Vert_{\sigma,1}^*),\ Z\mapsto(Y\mapsto\Phi(Z)(Y):=\mathbb{E}(ZY))\]
	is an isometric isomorphism.
\end{thm}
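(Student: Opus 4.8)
The plan is to obtain Theorem~\ref{dual representation scalar case p=1} almost immediately by combining the two preceding propositions, so the ``proof'' is essentially a bookkeeping argument: Proposition~\ref{Koethe dual} tells us that $\Phi\colon L_\sigma^1(P)^\times\to L_\sigma^1(P)^*$ is a linear isomorphism with $\Vert\Phi(Z)\Vert_{\sigma,1}^*=\sup\{|\mathbb{E}(ZY)|;\,\Vert Y\Vert_{\sigma,1}\leq 1\}$, and Proposition~\ref{representation Koethe dual p=1} identifies the set $L_\sigma^1(P)^\times$ with $L_{\sigma,\infty}^*(P)$ and the quantity $|Z|_{\sigma,\infty}^*$ with exactly that supremum. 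First I would record that $L_{\sigma,\infty}^*(P)=L_\sigma^1(P)^\times$ as sets by Proposition~\ref{representation Koethe dual p=1}, so that $\Phi$ is in particular a well-defined linear bijection from $L_{\sigma,\infty}^*(P)$ onto $L_\sigma^1(P)^*$.

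Next I would verify that $|\cdot|_{\sigma,\infty}^*$ is a norm. The homogeneity $|\alpha Z|_{\sigma,\infty}^*=|\alpha|\,|Z|_{\sigma,\infty}^*$ was already observed after the definition of $L_{\sigma,q}^*(P)$. For the triangle inequality and definiteness I would simply transport them through $\Phi$: by Proposition~\ref{representation Koethe dual p=1} we have the identity $|Z|_{\sigma,\infty}^*=\Vert\Phi(Z)\Vert_{\sigma,1}^*$ for every $Z\in L_{\sigma,\infty}^*(P)=L_\sigma^1(P)^\times$, and since $\Vert\cdot\Vert_{\sigma,1}^*$ is the dual norm on $L_\sigma^1(P)^*$ it satisfies the triangle inequality and separates points; because $\Phi$ is linear and injective these properties pull back to $|\cdot|_{\sigma,\infty}^*$. (Alternatively one could argue directly: $|Z_1+Z_2|_{\sigma,\infty}^*\le|Z_1|_{\sigma,\infty}^*+|Z_2|_{\sigma,\infty}^*$ follows from subadditivity of $\AVaR_\alpha$ applied in the defining supremum, and $|Z|_{\sigma,\infty}^*\ge\Vert Z\Vert_1$ from~\eqref{eq:12-2} gives definiteness — but the transport argument is cleaner.) This identity $|Z|_{\sigma,\infty}^*=\Vert\Phi(Z)\Vert_{\sigma,1}^*$ is precisely the statement that $\Phi$ is isometric.

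Finally, completeness of $(L_{\sigma,\infty}^*(P),|\cdot|_{\sigma,\infty}^*)$ is inherited: $L_\sigma^1(P)^*$ is a dual Banach space, hence complete, and an isometric linear bijection onto a complete normed space forces the domain to be complete. Assembling these pieces yields that $|\cdot|_{\sigma,\infty}^*$ is a norm making $L_{\sigma,\infty}^*(P)$ a Banach space and that $\Phi$ is an isometric isomorphism onto $L_\sigma^1(P)^*$.

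There is essentially no obstacle here, since all the analytic content — the Radon--Nikod\'ym argument, the density of simple functions, the use of the rearrangement inequality and of Proposition~\ref{avar equals expectation} to pin down $|Z|_{\sigma,\infty}^*$ — has already been discharged in Propositions~\ref{Koethe dual}, \ref{one part of duality}, \ref{avar equals expectation} and~\ref{representation Koethe dual p=1}. The only point requiring a moment's care is to make sure the domains match up correctly: one must cite Proposition~\ref{representation Koethe dual p=1} for the \emph{set} equality $L_\sigma^1(P)^\times=L_{\sigma,\infty}^*(P)$ before invoking Proposition~\ref{Koethe dual}, so that $\Phi$ is genuinely defined on all of $L_{\sigma,\infty}^*(P)$, and then the norm identity holds on this common domain.
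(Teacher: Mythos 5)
Your proposal is correct and matches the paper's own argument, which states exactly that the theorem follows immediately by combining Proposition~\ref{Koethe dual} with Proposition~\ref{representation Koethe dual p=1}; your write-up merely makes explicit the bookkeeping (set equality of domains, transport of the norm axioms and completeness through the isometry) that the paper leaves implicit.
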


In order to derive an analogous representation for the case $p\in (1,\infty)$ we need an equivalent result to~\ref{representation Koethe dual p=1} for this case. This requires some preparation. We begin by recalling a notion from \citet{Lorentz}.

\begin{defn}\label{S-concave}
	Let $\sigma$ be a distortion function. A function $H:[0,1]\rightarrow\mathbb{R}$ is called \textit{$S_\sigma$-concave} if whenever $y,b\in\mathbb{R}$ are such that $yS_\sigma(\alpha_1)+b=H(\alpha_1)$ and $yS_\sigma(\alpha_2)+b=H(\alpha_2)$ for some $\alpha_1<\alpha_2\in [0,1]$ then $H(\alpha)\geq yS_\sigma(\alpha)+b$ for each $\alpha\in[\alpha_1,\alpha_2]$.
\end{defn}

The next proposition is essentially contained in \citet[Proof of Theorem 3.6.2]{Lorentz}. Nevertheless, we include its proof for the reader's convenience.

\begin{prop}\label{concavity of infimum}
	Let $\sigma$ be a distortion function and let $u_0:=\inf\{u>0;\,\sigma(u)>0\}$. Moreover, let $\mathscr{H}$ be a set of $S_\sigma$-concave functions such that $H_{|[0,u_0]}$ is constant for every $H\in\mathscr{H}$. Assume that
	\[\forall\,\alpha\in[0,1]:\, F(\alpha):=\inf\{H(\alpha); H\in\mathscr{H}\}>-\infty.\]
	Then, $F$ is $S_\sigma$-concave.
\end{prop}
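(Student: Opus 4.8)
The plan is to show that the pointwise infimum of a family of $S_\sigma$-concave functions (all of which are constant on $[0,u_0]$) inherits the $S_\sigma$-concavity property directly from the definition. Recall that a function $H$ is $S_\sigma$-concave if, whenever the affine-in-$S_\sigma$ function $\alpha\mapsto yS_\sigma(\alpha)+b$ agrees with $H$ at two points $\alpha_1<\alpha_2$, it lies below $H$ on the whole interval $[\alpha_1,\alpha_2]$. The key structural fact I would exploit is that on $[u_0,1]$ the map $S_\sigma$ is a strictly decreasing continuous bijection onto $[0,1]$ (Remark~\ref{properties of S}), so that via the substitution $t=S_\sigma(\alpha)$ the notion of $S_\sigma$-concavity on $[u_0,1]$ is literally ordinary concavity of the transported function $t\mapsto H(S_\sigma^{-1}(t))$, and ordinary concavity is preserved under infima. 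On $[0,u_0]$ every $H\in\mathscr{H}$ is constant, hence so is $F$, which handles that part trivially; the only real work is gluing the two regimes.

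The steps I would carry out, in order. First, reduce to the interval $[u_0,1]$: since each $H\in\mathscr H$ is constant on $[0,u_0]$ and $F$ is an infimum of such, $F$ is constant on $[0,u_0]$, and I must check the $S_\sigma$-concavity inequality only when at least one of the interpolation points $\alpha_1,\alpha_2$ lies in $(u_0,1]$ (if both lie in $[0,u_0]$ the statement is vacuous because $S_\sigma$ is constant there, so no nonconstant affine-in-$S_\sigma$ function can interpolate two distinct values — and if it interpolates the same constant value there is nothing to prove). Second, fix $\alpha_1<\alpha_2$ in $[0,1]$ and $y,b$ with $yS_\sigma(\alpha_1)+b=F(\alpha_1)$, $yS_\sigma(\alpha_2)+b=F(\alpha_2)$, and let $\alpha\in[\alpha_1,\alpha_2]$; I want $F(\alpha)\ge yS_\sigma(\alpha)+b$. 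Since $F(\alpha)=\inf_{H\in\mathscr H}H(\alpha)$, it suffices to produce, for each $\varepsilon>0$, a single $H\in\mathscr H$ with $H(\alpha)\ge yS_\sigma(\alpha)+b-\varepsilon$; equivalently I want to bound $H(\alpha)$ from below for every $H$. Third — the heart of the argument — observe that for any $H\in\mathscr H$ we have $H(\alpha_i)\ge F(\alpha_i)=yS_\sigma(\alpha_i)+b$ for $i=1,2$. Now consider the affine-in-$S_\sigma$ function $L_H(\alpha):=y_HS_\sigma(\alpha)+b_H$ that agrees with $H$ at $\alpha_1$ and $\alpha_2$ (this exists and is unique provided $S_\sigma(\alpha_1)\ne S_\sigma(\alpha_2)$, which holds since at least one endpoint lies in $(u_0,1]$ where $S_\sigma$ is strictly decreasing). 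By $S_\sigma$-concavity of $H$, $H(\alpha)\ge L_H(\alpha)$. It then remains to compare the two affine-in-$S_\sigma$ functions $L_H$ and $\alpha\mapsto yS_\sigma(\alpha)+b$: at $\alpha_1,\alpha_2$ we have $L_H\ge yS_\sigma(\cdot)+b$ (because $H(\alpha_i)\ge yS_\sigma(\alpha_i)+b$), and two functions that are affine in the variable $s=S_\sigma(\alpha)$ and ordered at two distinct values of $s$ stay ordered in the same direction for all $s$ between them — so $L_H(\alpha)\ge yS_\sigma(\alpha)+b$ for $\alpha\in[\alpha_1,\alpha_2]$. Chaining, $H(\alpha)\ge L_H(\alpha)\ge yS_\sigma(\alpha)+b$ for every $H\in\mathscr H$, whence $F(\alpha)\ge yS_\sigma(\alpha)+b$, as desired.

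The main obstacle, and the place I would be most careful, is the degenerate geometry at $u_0$: when $\alpha_1\in[0,u_0]$ and $\alpha_2\in(u_0,1]$, the values $S_\sigma(\alpha_1)=1$ and $S_\sigma(\alpha_2)<1$ are still distinct, so the interpolation argument goes through, but one should double-check that for $\alpha\in[\alpha_1,u_0]$ the inequality $F(\alpha)\ge yS_\sigma(\alpha)+b$ reduces (using $S_\sigma(\alpha)=S_\sigma(\alpha_1)=1$ and the constancy of $F$ on $[0,u_0]$) to $F(\alpha_1)\ge yS_\sigma(\alpha_1)+b$, which holds with equality. The only genuinely vacuous case is $\alpha_1,\alpha_2\in[0,u_0]$ with $F(\alpha_1)=F(\alpha_2)$ forced (since $S_\sigma$ is constant there), where the interpolating line is the constant $F(\alpha_1)$, $y$ is unconstrained, and one just takes $y=0$; there is no loss. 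I would phrase the comparison-of-two-affine-functions step via the substitution $s=S_\sigma(\alpha)$ explicitly, since that makes "ordered at the endpoints implies ordered in between" into the one-line statement that an affine function of $s$ minus another affine function of $s$ is affine in $s$, hence has constant sign between any two points where it is nonnegative.
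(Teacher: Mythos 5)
Your proof is correct and follows essentially the same route as the paper's: for each $H\in\mathscr H$ you interpolate $H$ at $\alpha_1,\alpha_2$ by a function affine in $S_\sigma$ (using constancy on $[0,u_0]$ to cover the degenerate case), invoke $S_\sigma$-concavity to get $H\ge L_H$ on $[\alpha_1,\alpha_2]$, and then compare the two affine-in-$S_\sigma$ functions, which are ordered at the endpoints and hence in between. The paper phrases that last comparison via monotonicity of the difference $(\bar y-y)S_\sigma+(\bar b-b)$ rather than your substitution $s=S_\sigma(\alpha)$, but this is only a cosmetic difference.
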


\begin{proof}
	Let $y,b\in\mathbb{R}$ and let $\alpha_1<\alpha_2\in[0,1]$ such that $yS_\sigma(\alpha_j)+b=F(\alpha_j), j=1,2$. Let $H\in\mathscr{H}$ be arbitrary. Since $H_{|[0,u_0]}$ is constant there are $\bar{y},\bar{b}\in\mathbb{R}$ such that $\bar{y} S_\sigma(\alpha_j)+\bar{b}=H(\alpha_j), j=1,2$.
	
	In case of $y-\bar{y}\geq 0$ it follows that $\bar{y}S_\sigma+\bar{b}-(yS_\sigma+b)$ is nonincreasing while $\bar{y}S_\sigma+\bar{b}-(yS_\sigma+b)$ is nondecreasing in case of $y-\bar{y}\leq 0$. Therefore, $S_\sigma$-concavity of $H$ together with $yS_\sigma(\alpha_j)+b\leq\bar{y}S_\sigma(\alpha_j)+\bar{b}, j=1,2$ implies
	\[\forall\,\alpha\in [\alpha_1,\alpha_2]: yS_\sigma(\alpha)+b\leq\bar{y}S_\sigma(\alpha)+\bar{b}\leq H(\alpha).\]
	Since $H\in\mathscr{H}$ was arbitrary, we conclude that $F\geq yS_\sigma+b$ on $[\alpha_1,\alpha_2]$.
\end{proof}

\begin{prop}\label{properties S and S-concave}
	Let $\sigma$ be a distortion function, $u_0:=\inf\{u>0;\,\sigma(u)>0\}$, and let $y_1,y_2,b_1,b_2\in\mathbb{R}$. Moreover, let $H$ be $S_\sigma$-concave, continuous from the right in $u_0$ such that $H_{|[0,u_0]}$ is constant.
	
	If for $y,b\in\mathbb{R}$ and $\alpha_1,\alpha_2\in[0,1]$ with $\alpha_1<\alpha_2$ and $u_0<\alpha_2$ we have $H(\alpha_j)=y S_\sigma(\alpha_j)+b, j=1,2,$ then it follows that $yS_\sigma+b\geq H$ on $[0,1]\backslash (\alpha_1,\alpha_2)$.
	
\end{prop}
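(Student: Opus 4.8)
\medskip

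The plan is to exploit the concavity of $S_\sigma$ (established in Remark~\ref{properties of S}) together with the definition of $S_\sigma$-concavity, and to argue on the two intervals $[0,\alpha_1]$ and $[\alpha_2,1]$ separately. First I would dispose of the interval $[0,u_0]$: there $S_\sigma\equiv 1$ and, by hypothesis, $H$ is constant, so the inequality $yS_\sigma+b\geq H$ on $[0,u_0]$ reduces to the single scalar inequality $y+b\geq H(0)$. To obtain this I would use that $S_\sigma$ is concave and strictly decreasing on $[u_0,1]$ and, crucially, that the chord $yS_\sigma+b$ agrees with $H$ at the two points $\alpha_1<\alpha_2$ with $u_0<\alpha_2$; the $S_\sigma$-concavity of $H$ gives $H\geq yS_\sigma+b$ on $[\alpha_1,\alpha_2]$, and I expect the reverse inequality to propagate outward.

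For the interval $[\alpha_2,1]$ the argument should be: pick any $\alpha\in(\alpha_2,1]$. Then $\alpha_1<\alpha_2<\alpha$, and applying $S_\sigma$-concavity of $H$ to the chord of $H$ through the points $(\alpha_2,H(\alpha_2))$ and $(\alpha,H(\alpha))$ — call it $\bar y S_\sigma+\bar b$ — one gets $H\geq \bar y S_\sigma+\bar b$ on $[\alpha_2,\alpha]$, in particular at $\alpha_1$ if $\alpha_1\geq u_0$, or one compares slopes. The cleanest route is to compare the two linear-in-$S_\sigma$ functions $L:=yS_\sigma+b$ and $\bar L:=\bar yS_\sigma+\bar b$: they are both ``lines in the variable $s=S_\sigma(\cdot)$'', $L$ passes through $H$ at $\alpha_1,\alpha_2$ and $\bar L$ through $H$ at $\alpha_2,\alpha$; since $L\leq H$ at $\alpha$ (as $\alpha\in[\alpha_1,\alpha_2]$?—no, here I must be careful) I would instead use that $L(\alpha_2)=\bar L(\alpha_2)=H(\alpha_2)$ and that $L(\alpha_1)=H(\alpha_1)\geq \bar L(\alpha_1)$ (the last by $S_\sigma$-concavity of $H$ on $[\alpha_2,\alpha]\ni$... ) — so actually I should apply $S_\sigma$-concavity of $H$ on the interval $[\alpha_2,\alpha]$ to deduce where $H$ sits relative to $\bar L$, then translate a slope comparison in the $s$-variable into the claim at $\alpha$. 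Passing everything through the substitution $s=S_\sigma(\alpha)$ (a bijection $[u_0,1]\to[0,1]$) turns "$S_\sigma$-concave" into "ordinary concave", and then the statement is the elementary fact that a concave function lies below any of its chords outside the chord's endpoints; the interval $[0,\alpha_1]$ is handled symmetrically, using continuity of $H$ from the right at $u_0$ to reach down to $u_0$ and the constancy of $H$ and $S_\sigma$ below $u_0$ to finish. (The hypotheses "$H$ continuous from the right at $u_0$" and "$H_{|[0,u_0]}$ constant" are exactly what is needed to extend inequalities validated on $(u_0,1]$ down onto $[0,u_0]$.)

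The main obstacle I anticipate is the bookkeeping around the endpoint $u_0$ and the degenerate cases $\alpha_1\leq u_0$ versus $\alpha_1>u_0$: when $\alpha_1\leq u_0$ one has $S_\sigma(\alpha_1)=S_\sigma(0)=1$, so the two interpolation conditions at $\alpha_1$ and $\alpha_2$ may not uniquely pin down $(y,b)$ unless one also invokes $H(\alpha_1)=H(u_0)=H(0)$, and one must be sure that the chosen $(y,b)$ is consistent. Once the reduction $s=S_\sigma(\alpha)$ is in place and the $u_0$-endpoint cases are organized, the remaining work is the routine one-variable concavity estimate, so I would spend most of the write-up making the substitution rigorous and enumerating the $u_0$ cases, and only a line or two on the actual inequality.
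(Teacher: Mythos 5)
Your final plan---substitute $s=S_\sigma(\alpha)$ to turn $S_\sigma$-concavity into ordinary concavity on $[u_0,1]$, apply the elementary fact that a concave function lies below the extension of any of its chords outside the chord's endpoints, and use the constancy of $H$ and of $S_\sigma$ on $[0,u_0]$ (together with right-continuity at $u_0$) to cover the remaining piece---is correct and is in substance the paper's own argument, which merely works directly in the variable $\alpha$ via two explicit line-comparison inequalities instead of making the substitution explicit. The one wobble is in your intermediate attempt on $[\alpha_2,1]$: the auxiliary chord must interpolate $H$ at $\alpha_1$ and at the putative violation point $\alpha$ (so that $S_\sigma$-concavity can be evaluated at the \emph{interior} point $\alpha_2$ and compared with $yS_\sigma+b$ there), not at $\alpha_2$ and $\alpha$; with that choice the contradiction is immediate, exactly as in the paper.
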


\begin{proof}
	It is straightforward to show that if for $\alpha, \beta\in[0,1], \alpha<\beta,$ it holds $y_1S_\sigma(\alpha)+b_1=y_2S_\sigma(\alpha)+b_2$ and $y_1S_\sigma(\beta)+b_1>y_2S_\sigma(\beta)+b_2$
	then $\beta>u_0$ and 
	\begin{equation}\label{inequality 1}
	\forall\,\gamma\in (\max\{\alpha,u_0\},\beta]: y_1S_\sigma(\gamma)+b_1>y_2S_\sigma(\gamma)+b_2
	\end{equation}
	while for $\alpha,\beta\in[u_0,1], \alpha<\beta,$ the conditions $y_1S_\sigma(\alpha)+b_1>y_2S_\sigma(\alpha)+b_2$ and $y_1S_\sigma(\beta)+b_1=y_2S_\sigma(\beta)+b_2$ imply
	\begin{equation}\label{inequality 2}
	\forall\,\gamma\in [\alpha,\beta): y_1S_\sigma(\gamma)+b_1>y_2S_\sigma(\gamma)+b_2.
	\end{equation}
	
	In case of $\alpha_1\leq u_0$ it follows from the hypothesis that $H_{|[0,u_0]}$ is constant that trivially $yS+b\geq H$ on $[0,\alpha_1]$. Now let $u_0<\alpha_1$. We assume that $yS(\alpha)+b<H(\alpha)$ for some $\alpha\in(u_0,\alpha_1)$. Because $S_\sigma$ is strictly decreasing in $[u_0,1]$ there are $\tilde{y},\tilde{b}\in\mathbb{R}$ such that
	\[\tilde{y}S_\sigma(\alpha)+\tilde{b}=H(\alpha)\text{ and }\tilde{y}S(\alpha_2)+\tilde{b}=H(\alpha_2).\]
	The $S_\sigma$-concavity of $H$ hence implies $H\geq \tilde{y}S_\sigma+\tilde{b}$ on $[\alpha,\alpha_2]$. In particular
	\begin{equation}\label{inequality 3}
	yS_\sigma(\alpha_1)+b=H(\alpha_1)\geq \tilde{y}S_\sigma(\alpha_2)+\tilde{b}.
	\end{equation}
	On the other hand
	\[\tilde{y}S_\sigma(\alpha)+\tilde{b}=H(\alpha)>yS_\sigma(\alpha)+b\text{ and }\tilde{y}S_\sigma(\alpha_2)+\tilde{b}=H(\alpha_2)=yS_\sigma(\alpha_2)+b\]
	so that by~\eqref{inequality 2} we obtain $\tilde{y}S_\sigma+\tilde{b}>yS_\sigma+b$ on $[\alpha,\alpha_2)$ which contradicts~\eqref{inequality 3}. Therefore $yS+b\geq H$ on $(u_0,\alpha_1)$. Since $S_\sigma$ is continuous and $H$ is continuous from the right in $u_0$ the same inequality holds on $[u_0,\alpha_1)$. Because $S_\sigma$ and $H$ are constant on $[0,u_0]$ we obtain $yS+b\geq H$ on $[0,\alpha_1]$.
	
	It remains to show that $yS+b\geq H$ on $[\alpha_2,1]$ as well. Assume there is $\alpha\in[\alpha_2,1]$ with $yS_\sigma(\alpha)+b<H(\alpha)$. Since $\alpha_2>u_0$ there are again $\tilde{y},\tilde{b}\in\mathbb{R}$ such that
	\[\tilde{y}S_\sigma(\alpha_1)+\tilde{b}=H(\alpha_1)\text{ and }\tilde{y}S(\alpha)+\tilde{b}=H(\alpha).\]
	Because $H$ is $S_\sigma$-concave this implies
	\begin{equation}\label{inequality 4}
	\forall\,\beta\in [\alpha_1,\alpha]:\,H(\beta)\geq \tilde{y}S_\sigma(\beta)+\tilde{b}.
	\end{equation}
	On the other hand
	\[\tilde{y}S_\sigma(\alpha_1)+\tilde{b}=H(\alpha_1)=yS_\sigma(\alpha_1)+b\text{ and }\tilde{y}S_\sigma(\alpha)+\tilde{b}=H(\alpha)>yS_\sigma(\alpha)+b.\]
	By~\eqref{inequality 1} it follows $\tilde{y}S_\sigma+\tilde{b}>yS_\sigma+b$ on $(\max\{\alpha_1,u_0\},\alpha]$. In particular 
	\[\tilde{y}S_\sigma(\alpha_2)+\tilde{b}>yS_\sigma(\alpha_2)+b=H(\alpha_2)\]
	which contradicts~\eqref{inequality 4}.
\end{proof}

\begin{defn}
	Let $\sigma$ be a distortion function. For a continuous function $G:[0,1]\rightarrow\mathbb{R}$ we define
	\[G_\sigma^*:[0,\infty)\rightarrow\mathbb{R}, G_\sigma^*(y):=\inf_{\alpha\in[0,1]} yS_\sigma(\alpha)-G(\alpha).\]
	Then $G(\alpha)+G_\sigma^*(y)\leq yS_\sigma(\alpha)$ for all $\alpha\in[0,1], y\geq 0$ so that
	\[G_\sigma:[0,1]\rightarrow\mathbb{R}, G_\sigma(\alpha):=\inf_{y\geq 0}yS_\sigma(\alpha)-G_\sigma^*(y)\]
	is well-defined and satisfies $G_\sigma\geq G$.
\end{defn}

\begin{rem}\label{properties of G_sigma 1}~
	\begin{enumerate}
		\item Setting as before $u_0:=\inf\{u>0; \sigma(u)>0\}$ we have that $S_{\sigma|[u_0,1]}$ is a bijection from $[u_0,1]$ onto $[0,1]$. Denoting by abuse of notation its inverse with $S^{-1}$ it follows
		\[\forall\,\alpha\in [0,1]: G_\sigma(S^{-1}(\alpha))=\inf_{y\geq 0}y\alpha-G_\sigma^*(y)\]
		so that
		\[\tilde{G}_\sigma:[0,1]\rightarrow\mathbb{R}, \tilde{G}_\sigma(\alpha):=\inf_{y\geq 0}y\alpha-G_\sigma^*(y)\]
		is well-defined. Being the infimum of nondecreasing and concave functions $\tilde{G}_\sigma$ is nondecreasing and concave, too. Therefore, $\tilde{G}_\sigma$ is differentiable from the right on $[0,1)$ with non-negative and nonincreasing right derivative. We obviously have $G_\sigma=\tilde{G}_\sigma\circ S_\sigma$ so that the concavity of $S_\sigma$ implies that $G_\sigma$ is concave, too. Denoting left and right derivatives by $\frac{d_l}{d\alpha}$ and $\frac{d_r}{d\alpha}$ respectively, an appropriate adaption of your favorite proof of the chain rules yields
		\[\forall\, \alpha\in (0,1]:\,\frac{d_l}{d\alpha}G_\sigma(\alpha)=\frac{d_r}{d\alpha}\tilde{G}_\sigma(S_\sigma(\alpha))\frac{d_l}{d\alpha}S_\sigma(\alpha)=-\frac{d_r}{d\alpha}\tilde{G}_\sigma(S_\sigma(\alpha))\sigma(\alpha).\]
		Combined with $G_\sigma(1)-G_\sigma(\alpha)=\int_\alpha^1\frac{d_l}{d\alpha}G_\sigma(u)du$ we obtain
		\begin{equation}\label{chain rule}
		\forall\,\alpha\in [0,1]:\,G_\sigma(\alpha)=G_\sigma(1)+\int_\alpha^1 H(u)\sigma(u)du	
		\end{equation}
		for a non-negative, nondecreasing function $H$ on $[0,1]$ which is continuous from the left.
		\item For $y\geq 0$ the function $yS_\sigma-G_\sigma^*(y)$ is obviously $S_\sigma$-concave. It therefore follows from Proposition~\ref{concavity of infimum} that $G_\sigma$ is $S_\sigma$-concave. Moreover, being the infimum of nonincreasing functions $G_\sigma$ is nonincreasing.
		\item Because $S_{\sigma|[0,u_0]}=1$ it follows that $G_\sigma$ is constant on $[0,u_0]$. Hence, for all $\alpha_1,\alpha_2\in [0,1], \alpha_1<\alpha_2,$ there are $y\geq 0, b\geq G_\sigma^*(y)$ with $G_\sigma(\alpha_j)=yS_\sigma(\alpha_j)-b$ for $j=1,2$.
		
		Indeed, if $\alpha_2> u_0$ we choose $y=\frac{G_\sigma(\alpha_2)-G_\sigma(\alpha_1)}{S_\sigma(\alpha_2)-S_\sigma(\alpha_1)}$ which is well-defined and non-negative because $S_\sigma$ is strictly decreasing on $[u_0,1]$ and $G_\sigma$ is nonincreasing. Then
		\begin{align*}
		b&=yS_\sigma(\alpha_2)-G_\sigma(\alpha_2)\geq\inf_{\alpha\in[0,1]}yS_\sigma(\alpha)-G_\sigma(\alpha)\\
		&=\inf_{\alpha\in[0,1]}yS_\sigma(\alpha)-(\inf_{\tilde{y}\geq 0}\tilde{y}S_\sigma(\alpha)-G_\sigma^*(\tilde{y}))\geq G_\sigma^*(y).
		\end{align*}
		In case of $\alpha_2\leq u_0$ we may choose $y=0$ so that
		\[b=-G_\sigma(\alpha_2)=-G_\sigma(0)=-\inf_{y\geq 0}y-G_\sigma^*(y)\geq G_\sigma^*(0).\]
		If additionally $G$ is nonincreasing it holds
		\[G_\sigma(0)=\inf_{y\geq 0}y-G_\sigma^*(y)\leq -G_\sigma^*(0)=-\inf_{\alpha\in[0,1]}(-G(\alpha))=\sup_{\alpha\in[0,1]}G(\alpha)=G(0)\]
		so that because $G\leq G_\sigma$ we conclude
		\[b=-G_\sigma(0)=-G(0)=G_\sigma^*(0).\]
		
	\end{enumerate}
\end{rem}

\begin{prop}\label{properties of G_sigma 2}
	Let $G:[0,1]\rightarrow\mathbb{R}$ be continuous and nonincreasing. If $\alpha\in (0,1)$ is such that $G_\sigma(\alpha)>G(\alpha)$ then there are $0\leq\alpha_1<\alpha<\alpha_2\leq 1$ and $y\geq 0$ such that
	\[\forall\,\beta\in (\alpha_1,\alpha_2):\,G_\sigma(\beta)=yS_\sigma(\beta)-G_\sigma^*(y).\]
\end{prop}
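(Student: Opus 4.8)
The plan is to exploit the conjugacy structure built into the definition of $G_\sigma$: since $G_\sigma(\alpha)=\inf_{y\ge 0}\bigl(yS_\sigma(\alpha)-G_\sigma^*(y)\bigr)$, at the point $\alpha$ where strict inequality $G_\sigma(\alpha)>G(\alpha)$ holds we first want to produce a single affine-in-$S_\sigma$ function that realizes the infimum at $\alpha$, i.e.\ a $y_0\ge 0$ with $G_\sigma(\alpha)=y_0S_\sigma(\alpha)-G_\sigma^*(y_0)$. This is where I expect the main technical work: one must check that the infimum over $y\ge 0$ is actually attained. The candidate slope should be the (left or right) derivative of the concave function $\tilde G_\sigma$ at $S_\sigma(\alpha)$, using $G_\sigma=\tilde G_\sigma\circ S_\sigma$ from Remark~\ref{properties of G_sigma 1}; concavity of $\tilde G_\sigma$ plus the fact that $S_\sigma(\alpha)\in(0,1)$ for $\alpha\in(u_0,1)$ (and a separate, easy argument when $\alpha\le u_0$, where $G_\sigma$ is constant) gives a supporting line, which translates back into an attaining $y_0$. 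Once attainment is secured, set $b:=G_\sigma^*(y_0)$; by definition of $G_\sigma$ we have $y_0S_\sigma(\beta)-b\ge G_\sigma(\beta)$ for every $\beta\in[0,1]$, with equality at $\beta=\alpha$.

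Next I would argue that the contact set $C:=\{\beta\in[0,1]:G_\sigma(\beta)=y_0S_\sigma(\beta)-b\}$ contains a nondegenerate open interval around $\alpha$. The idea is that $C$ cannot be just the single point $\alpha$: the function $\beta\mapsto y_0S_\sigma(\beta)-b-G_\sigma(\beta)$ is nonnegative, vanishes at $\alpha$, and — this is the key point — it must also vanish somewhere on each side of $\alpha$ or else we reach a contradiction with $G_\sigma$ being the \emph{largest} $S_\sigma$-concave minorant-type object of $G$. More precisely, suppose the contact interval degenerates on, say, the right of $\alpha$. Then on a small interval $(\alpha,\alpha+\delta)$ we have $y_0S_\sigma>G_\sigma+b$ strictly, yet $G_\sigma(\alpha)>G(\alpha)$ and $G$ is continuous, so $G_\sigma(\alpha)>G(\beta)$ for $\beta$ near $\alpha$; one can then lift $G_\sigma$ slightly near $\alpha$ while staying $\le$ some line $yS_\sigma-G_\sigma^*(y)$, contradicting that $G_\sigma$ is the infimum of all such lines (equivalently, that $G_\sigma$ equals its own ``$S_\sigma$-biconjugate''). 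The hypotheses that $G$ is continuous and nonincreasing, together with the structural facts collected in Remark~\ref{properties of G_sigma 1} (namely $G_\sigma$ is $S_\sigma$-concave, nonincreasing, constant on $[0,u_0]$, and of the integral form \eqref{chain rule}), are exactly what make this lifting argument go through.

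Concretely, I would let $\alpha_1:=\inf\{\beta\le\alpha:\ [\beta,\alpha]\subseteq C\}$ and $\alpha_2:=\sup\{\beta\ge\alpha:\ [\alpha,\beta]\subseteq C\}$, i.e.\ take the maximal interval of contact through $\alpha$; continuity of $S_\sigma$ and of $G_\sigma$ (concave, hence continuous on $(0,1)$) shows $[\alpha_1,\alpha_2]\subseteq C$. It remains to rule out $\alpha_1=\alpha$ and, symmetrically, $\alpha_2=\alpha$. For the right endpoint: if $\alpha_2=\alpha$, then for $\beta\downarrow\alpha$ the gap $g(\beta):=y_0S_\sigma(\beta)-b-G_\sigma(\beta)$ is strictly positive; using $S_\sigma$-concavity of $G_\sigma$ and Proposition~\ref{properties S and S-concave} (applied with the contact points to the left of $\alpha$, if $\alpha>u_0$) one pins down the behavior of $G_\sigma$ just right of $\alpha$ and finds it is still strictly above $G$ there; then a new supporting line of $\tilde G_\sigma$ at a point slightly larger than $S_\sigma(\alpha)$ has strictly smaller slope and would define an $S_\sigma$-concave function lying strictly above $G_\sigma$ near $\alpha$ while still $\ge G$, contradicting maximality (i.e.\ contradicting $G_\sigma=\inf_{y\ge0}(yS_\sigma-G_\sigma^*(y))$ being attained from below by $G$-majorants). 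The case $\alpha\le u_0$ is immediate since $G_\sigma$ is constant there and $G_\sigma(\alpha)>G(\alpha)\ge G(u_0)\ge\cdots$ forces the contact interval to extend at least across $[0,u_0]$, and then past $u_0$ by the same mechanism. The left endpoint is handled symmetrically. This yields $0\le\alpha_1<\alpha<\alpha_2\le 1$ and $y_0\ge 0$ with $G_\sigma(\beta)=y_0S_\sigma(\beta)-G_\sigma^*(y_0)$ for all $\beta\in(\alpha_1,\alpha_2)$, as claimed. The main obstacle, as noted, is the attainment of the infimum defining $G_\sigma(\alpha)$ and the careful bookkeeping in the ``lifting'' contradiction; everything else is continuity and the already-established properties of $S_\sigma$ and $G_\sigma$.
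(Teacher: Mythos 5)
Your first step is sound: because $\tilde G_\sigma$ from Remark~\ref{properties of G_sigma 1} is concave and nondecreasing, it admits a supporting line of nonnegative slope $y_0$ at $t_0=S_\sigma(\alpha)$, and translating back one checks that its intercept equals $G_\sigma^*(y_0)$ (the line majorizes $G_\sigma\ge G$ everywhere, so its intercept is $\le G_\sigma^*(y_0)$, while $G_\sigma(\alpha)\le y_0S_\sigma(\alpha)-G_\sigma^*(y_0)$ forces equality). So the infimum defining $G_\sigma(\alpha)$ is attained. This part is fine, but it is not where the difficulty of the proposition lies.

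The gap is in the second step, which is the actual content of the statement. The claim that the contact set of this supporting line contains an interval around $\alpha$ is not proved by your ``lifting'' argument: lifting $G_\sigma$ near $\alpha$ ``while staying $\le$ some line $yS_\sigma-G_\sigma^*(y)$'' contradicts nothing, because $G_\sigma$ is by definition the infimum over \emph{all} such lines (being below one of them is vacuous), and the lines themselves are determined by $G$ through $G_\sigma^*$, not by $G_\sigma$, so they do not move when you perturb $G_\sigma$. (You also call $G_\sigma$ the ``largest $S_\sigma$-concave minorant'' of $G$; it is the smallest such \emph{majorant}, $G_\sigma\ge G$, and this sign confusion feeds into the contradiction you are aiming for.) To obtain a genuine contradiction one must exhibit a \emph{single} $y$ with $yS_\sigma(\alpha)-G_\sigma^*(y)<G_\sigma(\alpha)$. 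The natural candidate is the $S_\sigma$-chord of $G_\sigma$ over a small interval $(\alpha_1,\alpha_2)\ni\alpha$, and to conclude that its intercept really equals $G_\sigma^*(y)$ you must show that this chord majorizes $G$ on all of $[0,1]$: inside $(\alpha_1,\alpha_2)$ by choosing the interval so small that $G_\sigma(\alpha_2)>G(\alpha_1)$ (using continuity, monotonicity and $G_\sigma(\alpha)>G(\alpha)$), and outside $(\alpha_1,\alpha_2)$ by Proposition~\ref{properties S and S-concave}. None of this appears in your sketch. A further unaddressed point: if $\tilde G_\sigma$ had a kink at $S_\sigma(\alpha)$, no single supporting slope could have contact on both sides of $\alpha$, so your choice of ``the (left or right) derivative'' would have to be shown to be immaterial --- which is again exactly what the chord argument delivers. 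In short, the supporting-line reduction does not bypass the chord construction that constitutes the paper's proof; it only postpones it.
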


\begin{proof}
	By continuity of $S_\sigma$ and $G_\sigma$ there are $0\leq\alpha_1<\alpha<\alpha_2\leq 1$ such that $G_\sigma(\alpha_2)>G(\alpha_1)$. From Remark~\ref{properties of G_sigma 1} iii) we conclude the existence of $y\geq 0$ and $b\geq G_\sigma^*(y)$ such that $G_\sigma(\alpha_j)=yS_\sigma(\alpha_j)-b, j=1,2$ and such that $y=0$ in case of $\alpha_2\leq u_0$.
	
	Because $S_\sigma$ and $G_\sigma$ are nonincreasing and $y\geq 0$ it follows from
	\[\inf_{\beta\in (\alpha_1,\alpha_2)}G_\sigma(\beta)=G_\sigma(\alpha_2)>G(\alpha_1)=\sup_{\beta\in (\alpha_1,\alpha_2)}G(\beta)\]
	that $yS_\sigma-b\geq G$ on $(\alpha_1,\alpha_2)$.
	
	If $\alpha_2\leq u_0$, we have seen in Remark~\ref{properties of G_sigma 1} iii) that without loss of generality we may assume $y=0$ and $b=-G(0)$. Since $G$ is nonincreasing it thus follows $yS_\sigma -b=G(0)\geq G$ on $[0,1]$.
	
	If $\alpha_2>u_0$ we apply Proposition~\ref{properties S and S-concave} to $G_\sigma$ to conclude $yS_\sigma-b\geq G_\sigma$ on $[0,1]\backslash (\alpha_1,\alpha_2)$. Since $G_\sigma\geq G$ and $yS_\sigma-b\geq G$ on $(\alpha_1,\alpha_2)$ we obtain also in this case $yS_\sigma-b\geq G$ on $[0,1]$.
	
	So in both cases $yS_\sigma-b\geq G$ or equivalently $yS_\sigma-G\geq b$ on $[0,1]$ so that $G_\sigma^*(y)=\inf_{\alpha}\,yS_\sigma(\alpha)-G(\alpha)\geq b$. Since also $b\geq G_\sigma^*(y)$ it follows $b=G_\sigma^*(y)$.
	
	Finally, since $G_\sigma$ is $S_\sigma$-concave and $G_\sigma(\alpha_j)=yS_\sigma(\alpha_j)-G_\sigma^*(\alpha_j)$ holds for $j=1,2$ it follows for $\beta\in (\alpha_1,\alpha_2)$
	\[yS_\sigma(\beta)-G_\sigma^*(y)\leq G_\sigma(\beta)=\inf_{\tilde{y}\geq 0}\tilde{y}S_\sigma(\beta)-G_\sigma^*(\tilde{y})\leq y S_\sigma(\beta)-G_\sigma^*(y)\]
	which proves the claim.
\end{proof}

\begin{prop}\label{properties G_sigma 3}
	Assume that $G:[0,1]\rightarrow\mathbb{R}$ is continuous, nonincreasing, and that $G(1)=0$. Then $G_\sigma(0)=G(0)$ and $G_\sigma(1)=0$.
\end{prop}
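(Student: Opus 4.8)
The plan is to read off $G_\sigma$ at the two endpoints directly from its definition, exploiting $S_\sigma(0)=1$ and $S_\sigma(1)=0$; note first that since $G$ is nonincreasing with $G(1)=0$ we have $G\geq 0$ and $\sup_{\alpha\in[0,1]}G(\alpha)=G(0)$.

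The equality $G_\sigma(0)=G(0)$ is essentially already recorded in Remark~\ref{properties of G_sigma 1}~iii): from $\sup_\alpha G(\alpha)=G(0)$ one gets $G_\sigma^*(0)=\inf_\alpha\bigl(-G(\alpha)\bigr)=-G(0)$, and plugging $y=0$ into the definition of $G_\sigma$ together with $S_\sigma(0)=1$ gives $G_\sigma(0)=\inf_{y\geq 0}\bigl(y-G_\sigma^*(y)\bigr)\leq -G_\sigma^*(0)=G(0)$. Since $G\leq G_\sigma$ always holds, this forces $G_\sigma(0)=G(0)$.

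For $G_\sigma(1)=0$ I would use $S_\sigma(1)=0$ to collapse the defining infimum:
\[
G_\sigma(1)=\inf_{y\geq 0}\bigl(yS_\sigma(1)-G_\sigma^*(y)\bigr)=-\sup_{y\geq 0}G_\sigma^*(y),
\]
so that it suffices to prove $\sup_{y\geq 0}G_\sigma^*(y)=0$. The bound ``$\leq 0$'' is immediate: evaluating the infimum defining $G_\sigma^*(y)$ at $\alpha=1$ gives $G_\sigma^*(y)\leq yS_\sigma(1)-G(1)=0$ for every $y\geq 0$. For ``$\geq 0$'' I would fix $\varepsilon>0$ and construct $y_\varepsilon\geq 0$ with $G_\sigma^*(y_\varepsilon)\geq-\varepsilon$, i.e.\ with $y_\varepsilon S_\sigma(\alpha)-G(\alpha)\geq-\varepsilon$ for all $\alpha\in[0,1]$. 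By continuity of $G$ at $1$ there is $\beta_\varepsilon\in[0,1)$ with $G(\alpha)<\varepsilon$ on $[\beta_\varepsilon,1]$; there $y_\varepsilon S_\sigma(\alpha)-G(\alpha)\geq-G(\alpha)>-\varepsilon$ irrespective of $y_\varepsilon$. On $[0,\beta_\varepsilon)$ one has $S_\sigma(\alpha)\geq S_\sigma(\beta_\varepsilon)>0$ — the strict positivity coming from $\beta_\varepsilon<1$ and the description of $S_\sigma$ in Remark~\ref{properties of S} — and $G(\alpha)\leq G(0)$, so the choice $y_\varepsilon:=G(0)/S_\sigma(\beta_\varepsilon)$ yields $y_\varepsilon S_\sigma(\alpha)\geq G(0)\geq G(\alpha)$ and hence $y_\varepsilon S_\sigma(\alpha)-G(\alpha)\geq 0>-\varepsilon$. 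Letting $\varepsilon\downarrow 0$ gives $\sup_{y\geq 0}G_\sigma^*(y)\geq 0$, whence $G_\sigma(1)=0$.

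The argument is almost entirely bookkeeping with the definitions; the only genuinely delicate point is the behaviour near $\alpha=1$, where $S_\sigma$ degenerates to $0$, so that no single $y$ need render $G_\sigma^*(y)$ nonnegative. This is exactly why one works with the supremum over $y$ and why the (left-)continuity of $G$ at $1$ is essential — otherwise a jump of $G$ at $1$ could spoil the estimate on $[0,\beta_\varepsilon)$. Everything else is supplied by the elementary properties of $S_\sigma$ and $G_\sigma$ collected in Remarks~\ref{properties of S} and~\ref{properties of G_sigma 1}.
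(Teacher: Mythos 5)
Your proof is correct and follows essentially the same route as the paper: the equality $G_\sigma(0)=G(0)$ is read off from Remark~\ref{properties of G_sigma 1}~iii), and for $G_\sigma(1)=0$ the paper's compactness step -- producing for each $n$ a $k_n$ with $k_nS_\sigma(\alpha)-G(\alpha)\ge -1/n$ on all of $[0,1]$, i.e.\ $G_\sigma^*(k_n)\ge -1/n$ -- is precisely your explicit construction of $y_\varepsilon$ via the split of $[0,1]$ at $\beta_\varepsilon$. Your version merely makes that compactness argument concrete (and quietly corrects an inequality-sign typo in the paper's displayed condition), so no substantive difference.
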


\begin{proof}
	We already observed in Remark~\ref{properties of G_sigma 1} iii) that $G_\sigma(0)=G(0)$. Using the compactness of $[0,1]$, $G(1)=0$, and that $S_\sigma(\alpha)=0$ implies $\alpha=1$ it follows 
	\[\forall\,n\in\mathbb{N}\,\exists\,k_n\in\mathbb{N}\,\forall\,\alpha\in [0,1]:\,G(\alpha)>k_n S_\sigma(\alpha)+\frac{1}{n}\]
	which implies that for every $n\in\mathbb{N}$ there is $k_n\in\mathbb{N}$ with $-1/n\leq G_\sigma^*(k_n)$. Because $S_\sigma(1)=0$ we derive
	\[G_\sigma(1)=\inf_{y\geq 0}(-G_\sigma^*(y))\leq\inf_{n\in\mathbb{N}}(-G_\sigma^*(k_n))\leq 0=G(1)\leq G_\sigma(1)\]
	which gives $G_\sigma(1)=0$.
\end{proof}

Combining Propositions~\ref{properties of G_sigma 2} and~\ref{properties G_sigma 3} we immediately obtain the next result.

\begin{prop}\label{properties of G_sigma 4}
	Let $G:[0,1]\rightarrow\mathbb{R}$ be continuous and nonincreasing such that $G(1)=0$. If $\alpha\in (0,1)$ satisfies $G_\sigma(\alpha)>G(\alpha)$ there are $0\leq \alpha_1<\alpha<\alpha_2\leq 1$ and $y\geq 0$ such that $G_\sigma=yS_\sigma-G_\sigma^*(y)$ on $(\alpha_1,\alpha_2)$ and $G_\sigma(\alpha_j)=G(\alpha_j)$ for $j=1,2$.
\end{prop}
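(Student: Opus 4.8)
The plan is to let $(\alpha_1,\alpha_2)$ be the connected component containing the given $\alpha$ of the set $V:=\{\beta\in[0,1]:G_\sigma(\beta)>G(\beta)\}$, and to show that on this component $G_\sigma$ agrees with a single function $yS_\sigma(\cdot)-G_\sigma^*(y)$. First I would record that $G_\sigma$ is continuous on $[0,1]$: by the representation~\eqref{chain rule} in Remark~\ref{properties of G_sigma 1} one has $G_\sigma(\beta)=G_\sigma(1)+\int_\beta^1 H(u)\sigma(u)\,\mathrm{d}u$ with $H\sigma$ integrable, and the right-hand side depends continuously on $\beta$. Hence $V$ is open in $[0,1]$, and since $G$ is continuous, nonincreasing with $G(1)=0$, Proposition~\ref{properties G_sigma 3} yields $G_\sigma(0)=G(0)$ and $G_\sigma(1)=0=G(1)$, so that $0,1\notin V$. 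Consequently the component containing $\alpha$ is an interval $(\alpha_1,\alpha_2)$ with $0\le\alpha_1<\alpha<\alpha_2\le1$, and since $\alpha_1,\alpha_2\notin V$ while $G_\sigma\ge G$ everywhere, we get $G_\sigma(\alpha_j)=G(\alpha_j)$ for $j=1,2$ for free. This already disposes of the contact condition in the statement.

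It remains to produce a single $y\ge0$ with $G_\sigma=yS_\sigma(\cdot)-G_\sigma^*(y)$ on $(\alpha_1,\alpha_2)$. For each $\beta\in(\alpha_1,\alpha_2)\subseteq V$ Proposition~\ref{properties of G_sigma 2} provides an open interval $I_\beta\ni\beta$, which we may take inside $(\alpha_1,\alpha_2)$, and some $y_\beta\ge0$ with $G_\sigma=y_\beta S_\sigma(\cdot)-G_\sigma^*(y_\beta)$ on $I_\beta$. Whenever two of these local representations overlap on a set where $S_\sigma$ is not constant, subtracting them forces the exponents --- and then also the constants $G_\sigma^*(\cdot)$ --- to coincide. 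Because $S_\sigma$ is strictly decreasing on $[u_0,1]$ (with $u_0:=\inf\{u>0:\sigma(u)>0\}$; cf.\ Remark~\ref{properties of S}), the map $\beta\mapsto y_\beta$ is therefore locally constant, hence constant, on the interval $W:=(\max\{\alpha_1,u_0\},\alpha_2)$, giving some $\bar y\ge0$ with $G_\sigma=\bar y S_\sigma(\cdot)-G_\sigma^*(\bar y)$ on $W$. One checks $W\neq\emptyset$, i.e.\ $\alpha_2>u_0$: otherwise $G_\sigma$ is constant equal to $G_\sigma(0)=G(0)$ on $[0,u_0]\supseteq[\alpha_1,\alpha_2]$ (Remark~\ref{properties of G_sigma 1}(iii) and Proposition~\ref{properties G_sigma 3}), so the contact conditions give $G(\alpha_1)=G(\alpha_2)=G(0)$, whence monotonicity of $G$ forces $G\equiv G(0)\equiv G_\sigma$ on $[\alpha_1,\alpha_2]$, contradicting $(\alpha_1,\alpha_2)\subseteq V$.

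Finally one extends the identity from $W$ to all of $(\alpha_1,\alpha_2)$, which is needed only when $\alpha_1<u_0$. On $(\alpha_1,u_0)$ both $G_\sigma$ and $\bar y S_\sigma(\cdot)-G_\sigma^*(\bar y)$ are constant, since $S_\sigma\equiv1$ there and $G_\sigma$ is constant on $[0,u_0]$; evaluating at $u_0$ and using continuity, $\bar y S_\sigma(u_0)-G_\sigma^*(\bar y)=\lim_{\beta\downarrow u_0}G_\sigma(\beta)=G_\sigma(u_0)$, so the two constants agree and the identity extends across $[0,u_0]$. (Alternatively, on $[0,u_0]$ one may just use $\bar y=0$, as $G_\sigma^*(0)=-G(0)$ for nonincreasing $G$ by Remark~\ref{properties of G_sigma 1}(iii).) Setting $y:=\bar y$, with $\alpha_1$ and $\alpha_2$ as above, then finishes the proof.

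The step I expect to be the main obstacle is the gluing in the second paragraph: since $S_\sigma$ is flat on $[0,u_0]$, a local representation $G_\sigma=yS_\sigma(\cdot)-G_\sigma^*(y)$ does not pin down $y$ there, so the pieces delivered by Proposition~\ref{properties of G_sigma 2} cannot be patched together naively. The way around this is to carry out the patching only on the region $[u_0,1]$, where $S_\sigma$ is injective and acts as an honest change of variable, and then to observe that the single flat piece over $[0,u_0]$ attaches without further work. Everything else --- continuity of $G_\sigma$, the endpoint contacts with $G$, and $W\neq\emptyset$ --- is a routine consequence of the results already proved.
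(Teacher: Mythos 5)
Your proof is correct and follows the same route as the paper, whose entire proof is the remark that the result is ``immediate'' from Propositions~\ref{properties of G_sigma 2} and~\ref{properties G_sigma 3}. Your write-up supplies exactly the details that one-liner elides --- the choice of $(\alpha_1,\alpha_2)$ as the connected component of $\{G_\sigma>G\}$ containing $\alpha$ (with Proposition~\ref{properties G_sigma 3} excluding the endpoints $0,1$), and the gluing of the local representations from Proposition~\ref{properties of G_sigma 2} on $(\max\{\alpha_1,u_0\},\alpha_2)$, where injectivity of $S_\sigma$ pins down $y$ --- and these details check out.
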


We have now everything at hand to derive the analogue of Proposition~\ref{representation Koethe dual p=1}.

\begin{lem}\label{representation Koethe dual p>1}
	Let $\sigma$ be a distortion function and $p\in (1,\infty)$ with conjugate exponent $q$. Then $L_\sigma^p(P)^\times=L_{\sigma,q}^*(P)$, for every $Z\in L_\sigma^p(P)^\times$ it holds $|Z|_{\sigma,q}^*=\Vert\varphi_Z\Vert_{\sigma,p}^*$, and there is $Y\in L_\sigma^p(P)$ with $\Vert Y\Vert_{\sigma,p}=1$ such that $\varphi_Z(Y)=\Vert\varphi_Z\Vert_{\sigma,p}^*$.
\end{lem}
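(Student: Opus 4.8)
The plan is to produce, for a given $Z\in L_\sigma^p(P)^\times$, an explicit competitor $Z'$ realizing the infimum defining $|Z|_{\sigma,q}^*$ together with an explicit norming element $Y$ for $\varphi_Z$. By Remark~\ref{Koethe dual enthaelt betraege} we may and do assume $Z\ge 0$ (the general $\mathbb{K}$-valued case is recovered at the end by a phase twist, and $Z=0$ is trivial). Recall from Proposition~\ref{one part of duality} that we already have $L_{\sigma,q}^*(P)\subseteq L_\sigma^p(P)^\times$ and, via Proposition~\ref{Koethe dual}, $\Vert\varphi_Z\Vert_{\sigma,p}^*=\sup\{|\mathbb{E}(ZY)|;\,\Vert Y\Vert_{\sigma,p}\le 1\}\le |Z|_{\sigma,q}^*$; so the whole point is the reverse chain $\Vert\varphi_Z\Vert_{\sigma,p}^*\ge\Vert Z'\Vert_{\sigma,q}\ge |Z|_{\sigma,q}^*$. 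To construct $Z'$, set $G(\alpha):=\int_\alpha^1 F_{|Z|}^{-1}(u)\,du$, which is continuous, nonincreasing and satisfies $G(1)=0$ (as $Z\in L^1(P)$), and form $G_\sigma$ as in the definition preceding Remark~\ref{properties of G_sigma 1}. By Remark~\ref{properties of G_sigma 1}(i) together with Proposition~\ref{properties G_sigma 3} we may write $G_\sigma(\alpha)=\int_\alpha^1 H(u)\sigma(u)\,du$ with $H\ge 0$ nondecreasing and left-continuous, $G_\sigma(0)=G(0)=\mathbb{E}|Z|$; extending $H$ on $[0,u_0]$ (where $u_0:=\inf\{u>0;\,\sigma(u)>0\}$) to the constant value $\lim_{u\downarrow u_0}H(u)$ keeps $H$ nondecreasing and left-continuous. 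Let $Z'$ be any random variable with $F_{|Z'|}^{-1}=H$ (it exists by Remark~\ref{rem:Extension}). Since $G_\sigma\ge G$, i.e.\ $\int_\alpha^1\sigma(u)H(u)\,du\ge\int_\alpha^1 F_{|Z|}^{-1}(u)\,du$ for all $\alpha$, Proposition~\ref{description avar norm} yields $Z'\,{}_{\sigma}\negthickspace\succcurlyeq Z$, hence $|Z|_{\sigma,q}^*\le\Vert Z'\Vert_{\sigma,q}$.

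The technical core is the identity
\[
\int_0^1 F_{|Z|}^{-1}(u)\,H_n(u)^{q-1}\,du=\int_0^1\sigma(u)H(u)\,H_n(u)^{q-1}\,du,\qquad H_n:=H\wedge n,\ n\in\mathbb{N},
\]
and its monotone limit $\int_0^1 F_{|Z|}^{-1}H^{q-1}\,du=\int_0^1\sigma H^{q}\,du$. To prove it, put $\phi:=G_\sigma-G\ge 0$. By Proposition~\ref{properties G_sigma 3}, $\phi(0)=\phi(1)=0$; on $[0,u_0]$ one computes $\phi(\alpha)=\int_0^\alpha F_{|Z|}^{-1}(u)\,du$ and $H_n$ is constant there; and, crucially, on $(u_0,1)$ Proposition~\ref{properties of G_sigma 4} shows that wherever $\phi>0$ the function $G_\sigma$ is $S_\sigma$-affine, hence (comparing left derivatives of $\alpha\mapsto yS_\sigma(\alpha)-G_\sigma^*(y)$ and of $\alpha\mapsto\int_\alpha^1 H\sigma$) $H$, and therefore $H_n^{q-1}$, is locally constant there; so the Lebesgue--Stieltjes measure $d(H_n^{q-1})$ is carried by $\{\phi=0\}$. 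Integrating by parts on $[u_0,1]$ with the bounded nondecreasing left-continuous function $H_n^{q-1}$ against $d\phi$ (where $\phi'=F_{|Z|}^{-1}-\sigma H$), the boundary term at $1$ vanishes because $\phi(1)=0$, the boundary term at $u_0$ equals $-H_n(u_0)^{q-1}\int_0^{u_0}F_{|Z|}^{-1}$ which exactly cancels the contribution $\int_0^{u_0}F_{|Z|}^{-1}H_n^{q-1}\,du$, and $\int\phi\,d(H_n^{q-1})=0$; this leaves the displayed identity. I expect this step, and in particular the bookkeeping near $u_0$ and the use of Proposition~\ref{properties of G_sigma 4} to locate the support of $d(H_n^{q-1})$, to be the main obstacle; everything else is routine.

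With the identity in hand, fix $U\in\mathscr{U}(0,1)$ coupled comonotonically with $|Z|$ (Remark~\ref{rem:Extension}). For $n$ large, $c_n:=\big(\int_0^1\sigma H_n^{q}\,du\big)^{1/p}\in(0,\infty)$; put $Y_n:=c_n^{-1}H_n(U)^{q-1}$. Then $Y_n$ is bounded, $\Vert Y_n\Vert_{\sigma,p}=1$ (using $(q-1)p=q$), and since $|Z|$ and $Y_n$ are comonotone and nonnegative,
\[
\mathbb{E}(|Z|Y_n)=c_n^{-1}\int_0^1 F_{|Z|}^{-1}H_n^{q-1}\,du=c_n^{-1}\int_0^1\sigma H H_n^{q-1}\,du\ge c_n^{-1}\int_0^1\sigma H_n^{q}\,du=c_n^{\,p-1}.
\]
As $\mathbb{E}(|Z|Y_n)=\varphi_{|Z|}(Y_n)\le\Vert\varphi_{|Z|}\Vert_{\sigma,p}^*$ and $p/(p-1)=q$, raising $c_n^{\,p-1}\le\Vert\varphi_{|Z|}\Vert_{\sigma,p}^*$ to the power $q/p$ gives $c_n^{\,p}\le\big(\Vert\varphi_{|Z|}\Vert_{\sigma,p}^*\big)^{q}$; letting $n\to\infty$ and using monotone convergence, $\Vert Z'\Vert_{\sigma,q}=\big(\int_0^1\sigma H^{q}\,du\big)^{1/q}\le\Vert\varphi_{|Z|}\Vert_{\sigma,p}^*<\infty$. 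Thus $Z'\in L_\sigma^q(P)$ is a genuine competitor, so $|Z|_{\sigma,q}^*\le\Vert Z'\Vert_{\sigma,q}<\infty$ and $Z\in L_{\sigma,q}^*(P)$; combined with Proposition~\ref{one part of duality} this yields $L_\sigma^p(P)^\times=L_{\sigma,q}^*(P)$.

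Finally, set $Y:=\Vert Z'\Vert_{\sigma,q}^{-q/p}\,H(U)^{q-1}$. Then $\Vert Y\Vert_{\sigma,p}=1$ as above, and by the limiting identity (now finite),
\[
\varphi_{|Z|}(Y)=\mathbb{E}(|Z|Y)=\Vert Z'\Vert_{\sigma,q}^{-q/p}\int_0^1 F_{|Z|}^{-1}H^{q-1}\,du=\Vert Z'\Vert_{\sigma,q}^{-q/p}\int_0^1\sigma H^{q}\,du=\Vert Z'\Vert_{\sigma,q}.
\]
Hence $\Vert\varphi_{|Z|}\Vert_{\sigma,p}^*\ge\varphi_{|Z|}(Y)=\Vert Z'\Vert_{\sigma,q}\ge|Z|_{\sigma,q}^*\ge\Vert\varphi_{|Z|}\Vert_{\sigma,p}^*$, so all four quantities coincide and $Y$ attains $\Vert\varphi_{|Z|}\Vert_{\sigma,p}^*$. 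For a general $\mathbb{K}$-valued $Z\in L_\sigma^p(P)^\times$, Remark~\ref{Koethe dual enthaelt betraege} gives $|Z|\in L_\sigma^p(P)^\times$ with $\Vert\varphi_Z\Vert_{\sigma,p}^*=\Vert\varphi_{|Z|}\Vert_{\sigma,p}^*$, while $|\cdot|_{\sigma,q}^*$ depends only on $|Z|$; applying the scalar result to $|Z|$ and replacing the resulting norming element $Y_0\ge 0$ by $\1_{\{Z\neq 0\}}\tfrac{\overline{Z}}{|Z|}\,Y_0$ (same $\Vert\cdot\Vert_{\sigma,p}$-norm, and $\varphi_Z(\1_{\{Z\neq 0\}}\tfrac{\overline{Z}}{|Z|}Y_0)=\mathbb{E}(|Z|Y_0)$) finishes the proof.
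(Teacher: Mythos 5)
Your argument is correct, and at the level of the construction it coincides with the paper's: both form the $S_\sigma$-concave envelope $G_\sigma$ of $G(\alpha)=\int_\alpha^1 F_{|Z|}^{-1}(u)\,du$, extract the level function $H$ with $G_\sigma(\alpha)=\int_\alpha^1 H\sigma\,du$, and take $Z'=H(U)$ as the dominating competitor and $Y\propto \1_{\{Z\neq0\}}\tfrac{\overline{Z}}{|Z|}H(U)^{q-1}$ as the norming element. Where you genuinely diverge is in the technical core, i.e.\ the identity $\int_0^1 F_{|Z|}^{-1}H^{q-1}\,du=\int_0^1\sigma H^{q}\,du$ and the finiteness of $\int_0^1\sigma H^q\,du$. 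The paper proves the identity interval by interval between contact points of $G$ and $G_\sigma$ (using that $H$ is constant where $G_\sigma>G$ and $F_{|Z|}^{-1}=H\sigma$ where they touch), truncates in the $\alpha$-variable by restricting to $[0,\alpha_n]$ for contact points $\alpha_n$, and consequently must split into two cases according to whether contact points accumulate at $1$ (if not, $H$ is eventually constant, hence bounded). You instead truncate in the level, $H_n=H\wedge n$, and prove the single global identity $\int_0^1 F_{|Z|}^{-1}H_n^{q-1}\,du=\int_0^1\sigma H H_n^{q-1}\,du$ by Stieltjes integration by parts against $\phi=G_\sigma-G$, the point being that $d(H_n^{q-1})$ is carried by the contact set $\{\phi=0\}$ (via Proposition~\ref{properties of G_sigma 4}) while $\phi(0)=\phi(1)=0$ (Proposition~\ref{properties G_sigma 3}); this removes the paper's case distinction and the need to locate contact points near $1$, at the price of the bookkeeping around $u_0$ that you carry out explicitly. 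The remaining steps ($\int\sigma HH_n^{q-1}\ge\int\sigma H_n^q=c_n^p$, the bound $c_n^{p-1}\le\Vert\varphi_{|Z|}\Vert_{\sigma,p}^*$, monotone convergence) parallel the paper's. Two cosmetic points: the exponent in ``raising $c_n^{p-1}\le\Vert\varphi_{|Z|}\Vert_{\sigma,p}^*$ to the power $q/p$'' should be $q$ (equivalently, first deduce $c_n\le(\Vert\varphi_{|Z|}\Vert_{\sigma,p}^*)^{q/p}$ and then take $p$-th powers), and in the final phase twist one has a priori only $\Vert\1_{\{Z\neq0\}}\tfrac{\overline{Z}}{|Z|}Y_0\Vert_{\sigma,p}\le 1$, with equality forced because $\varphi_Z$ attains its norm there; neither affects the validity of the proof.
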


\begin{proof}
	By Proposition~\ref{one part of duality} we only have to show $L_\sigma^p(P)^\times\subseteq L_{\sigma,q}^*(P)$ and that
	\[\sup\{\mathbb{E}(ZY)|;\, Y\in L_\sigma^p(P), \Vert Y\Vert_{\sigma,p}\leq 1\}\]
	is an upper bound for $|Z|_{\sigma,q}^*$ for any $Z\in L_\sigma^p(P)^\times$.
	
	So we fix $Z\in L_\sigma^p(P)^\times$. By Remark~\ref{Koethe dual enthaelt betraege} we also have $|Z|\in L_\sigma^p(P)^\times$. We define
	\[G:[0,1]\rightarrow [0,\infty), G(\alpha):=\int_\alpha^1 F_{|Z|}^{-1}(u)du\]
	and observe that $G$ is well-defined by $|Z|\in L^1(P)$. $G$ is obviously continuous, differentiable from the left, nonincreasing with $G(1)=0$.
	By Proposition~\ref{properties of G_sigma 4} and Remark~\ref{properties of G_sigma 1} i) there is a non-negative, nondecreasing function $H$ on $[0,1]$ which is continuous from that left such that $G_\sigma(\alpha)=\int_\alpha^1 H(u)\sigma(u)du$.
	
	If there is $\alpha\in (0,1)$ with $G_\sigma(\alpha)>G(\alpha)$ it follows immediately from Proposition~\ref{properties of G_sigma 4} that there are $0\leq \alpha_1<\alpha<\alpha_2\leq 1$ and $y\geq 0$ such that $H(u)=y$ for $u\in (\alpha_1,\alpha_2)$ and $G_\sigma(\alpha_j)=G(\alpha_j), j=1,2$ so that
	\begin{align*}
	\int_{\alpha_1}^{\alpha_2} H(u)^q\sigma(u) du&=y^{q-1}\int_{\alpha_1}^{\alpha_2}H(u)\sigma(u)du=y^{q-1}(G_\sigma(\alpha_1)-G_\sigma(\alpha_2))\nonumber\\
	&=y^{q-1}(G(\alpha_1)-G(\alpha_2))=\int_{\alpha_1}^{\alpha_2}H(u)^{q-1}F_{|Z|}^{-1}(u)du.
	\end{align*}
	On the other hand, if $G_\sigma(\alpha)=G(\alpha)$ by continuity there is a maximal closed interval $[\alpha_1,\alpha_2]$ containing $\alpha$ such that $G$ and $G_\sigma$ coincide on $[\alpha_1,\alpha_2]$. Thus, on $(\alpha_1,\alpha_2)$ the left derivatives of $G$ and $G_\sigma$ coincide, i.e., $F_{|Z|}^{-1}=H\sigma$ on $(\alpha_1,\alpha_2)$ which implies again
	\[\int_{\alpha_1}^{\alpha_2}H(u)^q\sigma(u)du=\int_{\alpha_1}^{\alpha_2}H(u)^{q-1}F_{|Z|}^{-1}(u) du.\]
	Combining these arguments gives
	\begin{align}\label{equality of integrals}
	\forall\,\alpha_1,\alpha_2\in [0,1]:&\Big(G_\sigma(\alpha_1)=G(\alpha_1), G_\sigma(\alpha_2)=G(\alpha_2)\\
	&\;\;\Rightarrow\int_{\alpha_1}^{\alpha_2}H(u)^q\sigma(u)du=\int_{\alpha_1}^{\alpha_2}H(u)^{q-1}F_{|Z|}^{-1}(u)du.\Big)\nonumber
	\end{align}
	
	In order to proceed, we distinguish two cases. First we assume that there is a strictly increasing sequence $(\alpha_n)_{n\in\mathbb{N}}$ in $(0,1)$ converging to 1 such that $G(\alpha_n)=G_\sigma(\alpha_n)$. We define
	\[Y_n:=(\1_{[0,\alpha_n]} H^{q-1})\circ U,\]
	where $U\in\mathscr{U}(0,1)$ is coupled in a comonotone way with $|Z|$. From $1/p+1/q=1$ it follows that $|Y_n^p|=(\1_{[0,\alpha_n]} H^q)\circ U$ which implies
	\begin{equation}\label{truncated norm}
	\Vert Y_n\Vert_{\sigma,p}^p=\int_0^1 F_{|Y_n|}^{-1}(u)^p\sigma(u)du=\int_0^{\alpha_n}H^q(u)\sigma(u)du<\infty
	\end{equation}
	since $H$ in nondecreasing and $\int_0^1\sigma(u)du=1$ so that $Y_n=|Y_n|\in L_\sigma^p(P)$. Using the notation from Proposition~\ref{Koethe dual}, because $|Z|$ and $U$ are coupled in a comonotone way we have by~\eqref{truncated norm} and~\eqref{equality of integrals} applied to $\alpha_1=0$ and $\alpha_2=\alpha_n$
	\begin{align*}
	\int_0^{\alpha_n} H(u)^q\sigma(u)du&=\int_0^{\alpha_n} H^{q-1}(u)F_{|Z|}^{-1}(u)du=\mathbb{E}(|Y_n||Z|)\\
	&\leq\Vert\varphi_{|Z|}\Vert_{\sigma,p}^*\Vert Y_n\Vert_{\sigma,p}\nonumber=\Vert\varphi_{|Z|}\Vert_{\sigma,p}^*\Big(\int_0^{\alpha_n}H(u)^q\sigma(u)du\Big)^{1/p}
	\end{align*}
	which gives
	\[\Big(\int_0^1\1_{[0,\alpha_n]}H(u)^q\sigma(u)du\Big)^{1/q}\leq\Vert\varphi_{|Z|}\Vert_{\sigma,p}^*\]
	for all $n\in\mathbb{N}$. Using that $\lim_{n\rightarrow\infty}\alpha_n=1$ an application of the Monotone Convergence Theorem yields
	\begin{equation}\label{norm inequality}
	\Big(\int_0^1 H(u)^q\sigma(u)du\Big)^{1/q}\leq \Vert\varphi_{|Z|}\Vert_{\sigma,p}^*
	\end{equation}
	so that $Z^\prime:=H\circ U$ belongs to $L_\sigma^q(P)$. Because $Z^\prime=|Z^\prime|$ and $F_{|Z^\prime|}^{-1}=H$ it follows from
	\[\forall\,\alpha\in [0,1]:\,\int_\alpha^1 H(u)\sigma(u)du=G_\sigma(\alpha)\geq G(\alpha)=\int_\alpha^1 F_{|Z|}^{-1}(u)du\]
	that $Z^{\prime}{}_{\sigma}\negthickspace\succcurlyeq Z$ which combined with~\eqref{norm inequality} yields $Z\in L_{\sigma,q}^*$ and
	\[|Z|_{\sigma,q}^*\leq \Vert\varphi_{|Z|}\Vert_{\sigma,p}^*=\Vert\varphi_Z\Vert_{\sigma,p}^*\]
	where we have used Remark~\ref{Koethe dual enthaelt betraege} in the last equality. Since also $\Vert Z\Vert_{\sigma,p}^*\leq|Z|_{\sigma,q}^*$ we obtain from~\eqref{norm inequality}
	\begin{equation}\label{norm equality}
	\Vert \varphi_Z\Vert_{\sigma,p}^*=\Big(\int_0^1 H(u)^q\sigma(u)du\Big)^{1/q}=\inf\{\Vert Z^\prime\Vert_{\sigma,q};\,Z^{\prime}{}_{\sigma}\negthickspace\succcurlyeq Z\}.
	\end{equation}
	Now we define
	\begin{equation}\label{norm maximizer}
	Y:=\1_{\{Z\neq 0\}}\frac{\overline{Z}}{|Z|}\,H^{q-1}(U).
	\end{equation}
	Then the same arguments used in deriving~\eqref{truncated norm} combined with~\eqref{norm inequality} show that $Y\in L_\sigma^p(P)$ and
	\[\Vert Y\Vert_{\sigma,p}=\Big(\int_0^1 H^q(u)\sigma(u)du\Big)^{1/p}.\]
	Moreover, using that $|Z|$ and $U$ are coupled in a comonotone way, \eqref{equality of integrals} applied to $\alpha_1=0$ and $\alpha_2=1$, and~\eqref{norm equality} give
	\begin{align*}
	\varphi_Z(Y)&=\mathbb{E}(ZY)=\mathbb{E}(|ZY|)=\int_0^1 H(u)^{q-1}F_{|Z|}^{-1}(u)du\\
	&=\int_0^1 H(u)^q\sigma(u)du=\left(\int_0^1 H(u)^q\sigma(u)du\right)^{1/q}\left(\int_0^1 H(u)^q\sigma(u)du\right)^{1/p}\\
	&=\Vert\varphi_Z\Vert_{\sigma,p}^*\Vert Y\Vert_{\sigma,p}.
	\end{align*}
	
	Next, if there is no strictly increasing sequence $(\alpha_n)_{n\in\mathbb{N}}$ in $(0,1)$ converging to 1 such that $G(\alpha_n)=G_\sigma(\alpha_n)$ there is $\beta\in(0,1)$ such that $G(u)<G_\sigma(u)$ for all $\alpha\in (\beta,1)$ and such that $G(\beta)=G_\sigma(\beta)$. It therefore follows from Proposition~\ref{properties of G_sigma 4} that there is $y\geq 0$ such that $H=y$ on $(\beta,1)$. Because $H$ is nondecreasing this implies that $H$ is bounded so that trivially
	\[\int_0^1 H(u)^q\sigma(u)<\infty.\]
	By repeating the arguments from the first part of the proof it follows for $U\in\mathscr{U}(0,1)$ coupled in a comonotone way with $|Z|$ that $Z':=H\circ U$ satisfies $Z^\prime\in L_\sigma^q(P)$ and $Z^{\prime}{}_{\sigma}\negthickspace\succcurlyeq Z$ which gives $Z\in L_{\sigma,q}(P)^*$ and $|Z|_{\sigma,q}^*=\Vert\varphi_Z\Vert_{\sigma,p}^*=\Vert Z^\prime\Vert_{\sigma,q}$. Defining $Y$ as in~\eqref{norm maximizer} finally gives again $\varphi_Z(Y)=\Vert\varphi_Z\Vert_{\sigma,p}^*\Vert Y\Vert_{\sigma,p}$ which proves the claim.
\end{proof}

Combining Proposition~\ref{Koethe dual} and Lemma~\ref{representation Koethe dual p>1} we immediately derive the next result.

\begin{thm}\label{dual representation scalar case p>1}
	Let $\sigma$ be a distortion function and $p\in(1,\infty)$ with conjugate exponent $q$. Then $|\cdot|_{\sigma,q}^*$ is a norm on $L_{\sigma,q}^*(P)$ turning it into a Banach space. Moreover,
	\[\Phi:(L_{\sigma,q}^*(P),|\cdot|_{\sigma,q}^*)\rightarrow(L_\sigma^p(P)^*,\Vert\cdot\Vert_{\sigma,p}^*),\ Z\mapsto(Y\mapsto\Phi(Z)(Y):=\mathbb{E}(ZY))\]
	is an isometric isomorphism. Moreover, for every $\varphi\in L_\sigma^p(P)^*$ there is $Y\in L_\sigma^p(P)$ with $\Vert Y\Vert_{\sigma,p}=1$ such that $\varphi(Y)=\Vert\varphi\Vert_{\sigma,p}^*$.
\end{thm}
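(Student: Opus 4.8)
The plan is to obtain the theorem as a direct assembly of Proposition~\ref{Koethe dual} and Lemma~\ref{representation Koethe dual p>1}; no new estimates are needed. First I would fix the domain: Lemma~\ref{representation Koethe dual p>1} gives the set equality $L_\sigma^p(P)^\times=L_{\sigma,q}^*(P)$, so the map $\Phi$ of Proposition~\ref{Koethe dual}, which there is shown to be a linear isomorphism from $L_\sigma^p(P)^\times$ onto $L_\sigma^p(P)^*$, may be read as a linear isomorphism from $L_{\sigma,q}^*(P)$ onto $L_\sigma^p(P)^*$, with $\Phi(Z)=\varphi_Z$, $\varphi_Z(Y)=\mathbb{E}(ZY)$.

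Next I would check the isometry by chaining the two norm identities already established. For $Z\in L_{\sigma,q}^*(P)$, equation~\eqref{dual norm Koethe} in Proposition~\ref{Koethe dual} yields $\Vert\Phi(Z)\Vert_{\sigma,p}^*=\sup\{|\mathbb{E}(ZY)|;\,\Vert Y\Vert_{\sigma,p}\leq 1\}$, while Lemma~\ref{representation Koethe dual p>1} identifies this same supremum with $|Z|_{\sigma,q}^*$. Hence $\Vert\Phi(Z)\Vert_{\sigma,p}^*=|Z|_{\sigma,q}^*$ for every $Z$, i.e.\ $\Phi$ is isometric.

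From here the remaining assertions follow formally. The functional $|\cdot|_{\sigma,q}^*$ is absolutely homogeneous and subadditive on the subspace $L_{\sigma,q}^*(P)$ (as already observed from absolute homogeneity of the quantile and subadditivity of $\AVaR_\alpha$ right after the defining equations~\eqref{eq:20}); the only point needing the above is definiteness, and $|Z|_{\sigma,q}^*=\Vert\Phi(Z)\Vert_{\sigma,p}^*=0$ forces $\Phi(Z)=0$, hence $Z=0$ by injectivity of $\Phi$. Thus $|\cdot|_{\sigma,q}^*$ is a genuine norm, and since $\Phi$ is an isometric linear bijection onto the Banach space $\bigl(L_\sigma^p(P)^*,\Vert\cdot\Vert_{\sigma,p}^*\bigr)$ (a dual space, hence complete), the space $\bigl(L_{\sigma,q}^*(P),|\cdot|_{\sigma,q}^*\bigr)$ is complete as well, and $\Phi$ is the claimed isometric isomorphism. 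Finally, for the norm-attainment statement, given $\varphi\in L_\sigma^p(P)^*$ surjectivity of $\Phi$ produces $Z\in L_{\sigma,q}^*(P)$ with $\varphi=\varphi_Z$, and the last clause of Lemma~\ref{representation Koethe dual p>1} supplies $Y\in L_\sigma^p(P)$ with $\Vert Y\Vert_{\sigma,p}=1$ and $\varphi(Y)=\varphi_Z(Y)=\Vert\varphi_Z\Vert_{\sigma,p}^*=\Vert\varphi\Vert_{\sigma,p}^*$.

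I do not expect a genuine obstacle here: all the analytic difficulty has been absorbed into Lemma~\ref{representation Koethe dual p>1} (via the auxiliary machinery around $G_\sigma$, $S_\sigma$-concavity, and the comonotone coupling). The only place calling for a moment's attention is arguing that $|\cdot|_{\sigma,q}^*$ is a norm rather than merely a seminorm, which is precisely where the injectivity half of Proposition~\ref{Koethe dual} is used.
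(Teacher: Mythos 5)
Your proposal is correct and follows exactly the route the paper takes: the paper derives this theorem by the single remark that it follows immediately from combining Proposition~\ref{Koethe dual} with Lemma~\ref{representation Koethe dual p>1}, and your write-up simply makes explicit the chaining of the two norm identities, the transfer of completeness through the isometric bijection, and the norm-attainment from the last clause of the lemma. No gaps.
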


\begin{cor}\label{reflexivity}
	For a distortion function $\sigma$ and $p\in(1,\infty)$ the Banach space $L_\sigma^p(P)$ is reflexive.
\end{cor}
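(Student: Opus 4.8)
The plan is to deduce reflexivity directly from the norm-attainment assertion in Theorem~\ref{dual representation scalar case p>1}, using James' characterization of reflexivity. Recall James' theorem: a Banach space $E$ is reflexive if and only if every $\varphi\in E^{*}$ attains its norm on the closed unit ball $B_{E}$, i.e.\ there is $x\in B_{E}$ with $|\varphi(x)|=\Vert\varphi\Vert$. In the case $\mathbb{K}=\mathbb{C}$ one reduces to the real version by passing to the underlying real Banach space $E_{\mathbb{R}}$: reflexivity of $E$ is equivalent to reflexivity of $E_{\mathbb{R}}$, the functional $\operatorname{Re}\varphi$ has the same norm as $\varphi$, and after multiplying a maximizing vector by a suitable unimodular scalar one sees that $\varphi$ attains its norm precisely when $\operatorname{Re}\varphi$ does. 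So it suffices to verify the norm-attainment property for every $\varphi\in L_\sigma^p(P)^{*}$.

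This, however, is exactly what the last assertion of Theorem~\ref{dual representation scalar case p>1} provides: for $p\in(1,\infty)$ and any $\varphi\in L_\sigma^p(P)^{*}$ there exists $Y\in L_\sigma^p(P)$ with $\Vert Y\Vert_{\sigma,p}=1$ and $\varphi(Y)=\Vert\varphi\Vert_{\sigma,p}^{*}$, so in particular $\varphi$ attains its norm on the closed unit ball of $L_\sigma^p(P)$. Since $L_\sigma^p(P)$ is a Banach space by Theorem~\ref{Banach space} and $\varphi$ was arbitrary, James' theorem gives that $L_\sigma^p(P)$ is reflexive.

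There is essentially no obstacle remaining: all the substantive work — the explicit construction, carried out in the proof of Lemma~\ref{representation Koethe dual p>1}, of a unit vector on which a prescribed functional is maximized — has already been done, and this corollary merely packages that fact through James' theorem. (One could instead attempt to prove reflexivity by computing the bidual directly, identifying $(L_{\sigma,q}^{*}(P))^{*}$ with $L_\sigma^p(P)$; this is less convenient, since $L_{\sigma,q}^{*}(P)$ is not itself a space of the form $L_\sigma^q(P)$, so Theorem~\ref{dual representation scalar case p>1} does not apply to it directly and the Köthe-duality analysis would have to be redone for it.)
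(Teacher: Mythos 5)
Your proposal is correct and follows exactly the paper's own argument: the paper likewise deduces reflexivity from James' Theorem together with the norm-attainment statement in Theorem~\ref{dual representation scalar case p>1}. Your additional remarks on the reduction of the complex case to the real one are a reasonable (and harmless) elaboration of a point the paper leaves implicit.
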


\begin{proof}
	This is an immediate consequence of James' Theorem (see, e.g.\ \citet[Theorem~I.3]{Diestel1975}) and Theorem~\ref{dual representation scalar case p>1}.
\end{proof}

\begin{prop}
	Simple functions (and thus $L^{\infty}$) are dense in $L_{\sigma,q}^*(P)$,
	whenever $q<\infty$.
\end{prop}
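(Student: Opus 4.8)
The claim is that simple functions — equivalently $L^\infty(P)$, which sits inside $L^*_{\sigma,q}(P)$ by Proposition~\ref{one part of duality} combined with $L^\infty(P)\subseteq L^p_\sigma(P)$ — are dense in $L^*_{\sigma,q}(P)$ with respect to $|\cdot|^*_{\sigma,q}$, for every $q<\infty$, i.e.\ for every $p\in(1,\infty)$. The natural strategy is to dualize: by Theorem~\ref{dual representation scalar case p>1} the map $\Phi$ is an isometric isomorphism from $(L^*_{\sigma,q}(P),|\cdot|^*_{\sigma,q})$ onto $(L^p_\sigma(P)^*,\Vert\cdot\Vert^*_{\sigma,p})$, and by Corollary~\ref{reflexivity} the space $L^p_\sigma(P)$ is reflexive. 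Hence $L^*_{\sigma,q}(P)$ is (isometrically) the dual of the reflexive space $L^p_\sigma(P)$, so it is itself reflexive, and in a reflexive Banach space a subspace is dense iff it is weakly dense iff its annihilator (in the dual, here $L^p_\sigma(P)$ itself under the canonical identification) is $\{0\}$.

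So the plan is: first, I would fix $Z\in L^*_{\sigma,q}(P)$ and approximate it directly rather than going through an abstract annihilator argument, because a concrete truncation is available and cleaner. Set $Z_n:=Z\,\1_{\{|Z|\le n\}}$, which is a bounded, hence $L^\infty$, random variable, and $L^\infty$ functions are themselves $L^1$-limits of simple functions dominated in modulus, so it suffices to show $|Z-Z_n|^*_{\sigma,q}\to 0$. Note $|Z-Z_n|=|Z|\,\1_{\{|Z|>n\}}\le|Z|$, so $F^{-1}_{|Z-Z_n|}\le F^{-1}_{|Z|}$ and the sequence $F^{-1}_{|Z-Z_n|}$ decreases pointwise a.e.\ to $0$ (as $n\to\infty$ the sets $\{|Z|>n\}$ shrink to a null set). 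The key quantity to control is, by the description in Proposition~\ref{description avar norm} and Lemma~\ref{representation Koethe dual p>1}, the infimum of $\Vert Z'\Vert_{\sigma,q}$ over $Z'\,{}_\sigma\!\succcurlyeq (Z-Z_n)$; equivalently, via the construction in the proof of Lemma~\ref{representation Koethe dual p>1}, the quantity $\bigl(\int_0^1 H_n(u)^q\sigma(u)\,du\bigr)^{1/q}$ where $H_n$ is the nondecreasing function produced from $G_n(\alpha):=\int_\alpha^1 F^{-1}_{|Z-Z_n|}(u)\,du$ via the $S_\sigma$-biconjugate $(G_n)_\sigma$. Since $G_n\downarrow 0$ pointwise and the construction $G\mapsto G_\sigma$ is monotone (larger $G$ gives larger $G_\sigma$, directly from the definition of $G^*_\sigma$ and $G_\sigma$), we get $(G_n)_\sigma\downarrow$ some limit $\le (G_1)_\sigma$, and one checks the limit is $0$: indeed $0\le (G_n)_\sigma(\alpha)$ and $(G_n)_\sigma$ is dominated by the $S_\sigma$-biconjugate of $G_1$ which is finite, while $(G_n)_\sigma(\alpha)\le y\,S_\sigma(\alpha)-G^*_{n,\sigma}(y)$ and $G^*_{n,\sigma}(y)=\inf_\alpha\bigl(y S_\sigma(\alpha)-G_n(\alpha)\bigr)\to \inf_\alpha y S_\sigma(\alpha)=0$ by monotone convergence $G_n\downarrow 0$, forcing $(G_n)_\sigma\le y\,S_\sigma(\alpha)+o(1)$ for each fixed $y$, hence $(G_n)_\sigma\to 0$. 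Then $\int_0^1 H_n^q\sigma\,du = \int_0^1 H_n^{q-1}F^{-1}_{|Z-Z_n|}\,du$ on the relevant intervals (equation~\eqref{equality of integrals}), and a monotone/dominated convergence argument on $[0,1]$ gives $\int_0^1 H_n(u)^q\sigma(u)\,du\to 0$, i.e.\ $|Z-Z_n|^*_{\sigma,q}\to 0$.

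I would then finish by observing that each $Z_n\in L^\infty(P)$ can be approximated in $L^\infty$-norm, hence (by Proposition~\ref{simple function dense}\ref{enu:p1} applied to the dual side, or more simply by Proposition~\ref{one part of duality} which yields $|W|^*_{\sigma,q}\le C\Vert W\Vert_\infty$ for bounded $W$ — $L^\infty\subseteq L^*_{\sigma,q}$ continuously) in $|\cdot|^*_{\sigma,q}$-norm by simple functions; combining with the previous paragraph gives density of simple functions in $L^*_{\sigma,q}(P)$.

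**Main obstacle.** The delicate point is controlling the $S_\sigma$-biconjugate $(G_n)_\sigma$ as $n\to\infty$: the map $G\mapsto G_\sigma$ is a nonlinear envelope operation, and while monotonicity is immediate, one must argue carefully that $(G_n)_\sigma\to 0$ pointwise (not merely that it decreases) and that this transfers to $\int_0^1 H_n(u)^q\sigma(u)\,du\to0$ — the tail behavior near $u=1$, where $\sigma$ may blow up and $H_n$ need not be uniformly bounded, is exactly where the $L^*_{\sigma,q}$-norm differs from the $L^q$-norm, so the convergence there must be extracted from $Z\in L^*_{\sigma,q}(P)$ itself (i.e.\ from $\int_0^1 H^q\sigma<\infty$ for the untruncated $H$, giving a dominating integrable function for dominated convergence). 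I expect that with the explicit formula $|Z|^*_{\sigma,q}=\bigl(\int_0^1 H^q\sigma\,du\bigr)^{1/q}$ from the proof of Lemma~\ref{representation Koethe dual p>1} in hand, and the observation $H_n\le H$ (monotonicity of the biconjugate construction passed through the chain-rule identity~\eqref{chain rule}), dominated convergence closes the argument cleanly.
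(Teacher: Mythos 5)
Your strategy --- truncate $Z$ at level $n$ and show $|Z\1_{\{|Z|>n\}}|_{\sigma,q}^*\to0$ by tracking the level function $H_n$ attached to $G_n(\alpha)=\int_\alpha^1F_{|Z-Z_n|}^{-1}(u)\,du$ through the $S_\sigma$-biconjugate --- is genuinely different from the paper's, which approximates $Z$ by conditional expectations $\mathbb{E}(Z|\mathcal{F})$ over finite $\sigma$-algebras and exploits that conditional expectation is a $|\cdot|_{\sigma,q}^*$-contraction (via $\AVaR_\alpha\bigl(\mathbb{E}(Y|\mathcal{F})\bigr)\le\AVaR_\alpha(Y)$ and the stability of the relation ${}_\sigma\negthickspace\succcurlyeq$ under conditioning). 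Your route could be made to work, and you correctly isolate the decisive step; but exactly that step --- the pointwise domination $H_n\le H$ --- is not established by the argument you give.

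You deduce $H_n\le H$ from monotonicity of the biconjugate, i.e.\ from $(G_n)_\sigma\le G_\sigma$, ``passed through the chain-rule identity'' \eqref{chain rule}. But \eqref{chain rule} only converts this into $\int_\alpha^1H_n(u)\sigma(u)\,du\le\int_\alpha^1H(u)\sigma(u)\,du$ for every $\alpha$, and such an integral comparison does not imply $H_n\le H$ pointwise, even for nondecreasing functions: with $\sigma\equiv1$, $H=2\cdot\1_{[1/2,1]}$ and $H_n\equiv1$ one has $\int_\alpha^1 H_n(u)\,du\le\int_\alpha^1 H(u)\,du$ for all $\alpha\in[0,1]$, yet $H_n>H$ on $[0,1/2)$. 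Without $H_n\le H$ you have no integrable majorant for $H_n^q\sigma$, and what you do prove --- $\int_0^1H_n\sigma\,du=(G_n)_\sigma(0)\le\mathbb{E}\,|Z-Z_n|\to0$, i.e.\ $H_n\to0$ in $L^1(\sigma\,du)$ --- cannot by itself yield $\int_0^1H_n^q\sigma\,du\to0$; the possible loss of mass is near $u=1$, which is precisely where $|\cdot|_{\sigma,q}^*$ differs from an $L^q$-norm. The inequality $H_n\le H$ is in fact true, but it is a property of the least-$S_\sigma$-concave-majorant (level-function) construction that uses the comparison of all increments, $G(\alpha_1)-G(\alpha_2)\ge G_n(\alpha_1)-G_n(\alpha_2)$ for $\alpha_1<\alpha_2$, not merely $G_n\le G$, and it needs its own proof (essentially a minimax description of the left derivative of the concave envelope). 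As written, this is a genuine gap. A minor further loose end: to pass from the bounded truncations $Z_n$ to simple functions you should record explicitly that $Z'\,{}_\sigma\negthickspace\succcurlyeq Z'$ for every $Z'$, whence $L^\infty(P)\subseteq L_\sigma^q(P)\subseteq L_{\sigma,q}^*(P)$ with $|W|_{\sigma,q}^*\le\Vert W\Vert_{\sigma,q}\le\Vert W\Vert_\infty$.
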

\begin{proof}
	Let $\mathfrak{F}$ contain all \emph{finite }sigma algebras $\mathcal{F}$
	for which the measure $P$ is defined. Note that $\left(\mathfrak{F},\subseteq\right)$
	is a filter, and the proof of Proposition~\ref{simple function dense} actually
	demonstrates that 
	\[
	\left\Vert \mathbb{E}\left(Y|\mathcal{F}\right)-Y\right\Vert _{\sigma,p}\xrightarrow[\mathfrak{F}]{}0
	\]
	whenever $\mathcal{F}\in\mathfrak{F}$ increases. 
	
	Recall first that $\AVaR_{\alpha}\bigl(\mathbb{E}(Y|\mathcal{F})\bigr)\le\AVaR_{\alpha}(Y)$.
	Indeed, it follows from the conditional Jensen inequality (cf.\ \citet[Section 34]{Williams1991})
	that $\bigl(\mathbb{E}(Y|\mathcal{F})-q\bigr)_{+}\le\mathbb{E}\bigl((Y-q)_{+}|\mathcal{F}\bigr),$
	and hence, using \citet{Pflug2000}, 
	\begin{align*}
	\AVaR_{\alpha}\bigl(\mathbb{E}(Y|\mathcal{F})\bigr) & =  \min_{q\in\mathbb{R}}q+\frac{1}{1-\alpha}\mathbb{E}\bigl(\mathbb{E}(Y|\mathcal{F})-q\bigr)_{+}\\
	&\le\min_{q\in\mathbb{R}}q+\frac{1}{1-\alpha}\mathbb{E}\mathbb{E}\bigl((Y-q)_{+}|\mathcal{F}\bigr)\\
	& = \min_{q\in\mathbb{R}}q+\frac{1}{1-\alpha}\mathbb{E}\bigl((Y-q)_{+}|\mathcal{F}\bigr)=\AVaR_{\alpha}(Y).
	\end{align*}
	
	Suppose that $Z^{\prime}\:{}_{\sigma}\negthickspace\succcurlyeq Z$.
	It follows that 
	\[
	\int_{\alpha}^{1}\sigma(u)F_{Z^{\prime}}^{-1}(u)\mathrm{d}u\ge\int_{\alpha}^{1}F_{Z}^{-1}(u)\mathrm{d}u\ge\int_{\alpha}^{1}F_{\mathbb{E}\left(Z|\mathcal{F}\right)}^{-1}(u)\mathrm{d}u
	\]
	for every $\alpha\le1$, that is $Z^{\prime}\:{}_{\sigma}\negthickspace\succcurlyeq\mathbb{E}\left(Z|\mathcal{F}\right)$
	and thus $\left\Vert \mathbb{E}\left(Z|\mathcal{F}\right)\right\Vert _{\sigma,q}^{*}\le\left\Vert Z\right\Vert _{\sigma,q}^{*}$.
	The assertion follows as $\left\{ \mathbb{E}\left(Z|\mathcal{F}\right)\colon\,\mathcal{F}\in\mathfrak{F}\right\} $
	is arbitrarily close to $Z$ in the norm $\left\Vert \cdot\right\Vert _{\sigma,q}$
	by Proposition~\ref{simple function dense}. 
\end{proof}

We close this section by having a closer look at $L_\sigma^1(P)$ and its dual space.



\begin{thm}
	The dual space of $L_\sigma^1(P)$ is not separable. 
\end{thm}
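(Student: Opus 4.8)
The plan is to pass to the isometric identification $L_\sigma^1(P)^*\cong\bigl(L_{\sigma,\infty}^*(P),|\cdot|_{\sigma,\infty}^*\bigr)$ furnished by Theorem~\ref{dual representation scalar case p=1} and to produce an \emph{uncountable} subset of $L_{\sigma,\infty}^*(P)$ that is $1$‑separated for $|\cdot|_{\sigma,\infty}^*$. The naive idea of using indicators $\1_E$ does not work: one has $|\1_E|_{\sigma,\infty}^*=P(E)/S_\sigma\bigl(1-P(E)\bigr)$, and an uncountable family of events necessarily contains pairs $E,E'$ with $P(E\triangle E')$ arbitrarily small, on which this quantity collapses to $0$ once $\sigma$ is unbounded near $1$. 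The right idea is to concentrate the index in the part of $[0,1)$ where $\sigma$ is large and to calibrate the amplitudes against $\sigma$.

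First I would record the elementary identity $|c\,\1_E|_{\sigma,\infty}^*=c\,P(E)/S_\sigma\bigl(1-P(E)\bigr)$ for $c\ge0$; it follows from positive homogeneity of $|\cdot|_{\sigma,\infty}^*$, from $F_{\1_E}^{-1}=\1_{(1-P(E),1]}$, and from the fact — a direct consequence of $\sigma$ being nondecreasing — that $\beta\mapsto\beta/S_\sigma(1-\beta)$ is nondecreasing on $(0,1]$. Next I set $h(p):=S_\sigma(1-p)=\int_{1-p}^{1}\sigma(u)\,du$. Since $\sigma$ is nondecreasing, $h$ is concave; moreover, by Remark~\ref{properties of S}, $h$ restricts to an increasing continuous bijection of $[0,1-u_0]$ onto $[0,1]$, where $u_0:=\inf\{u>0:\sigma(u)>0\}$, so its inverse $h^{-1}$ is convex with $h^{-1}(0)=0$; consequently $h^{-1}(s)\le s$ and $s\mapsto s/h^{-1}(s)$ is nonincreasing on $(0,1]$. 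Now fix $\eta\in(0,1)$ and put $p_m:=h^{-1}(2^{-m}\eta)$ and $P_m:=\sum_{k\ge m}p_k$, so that $P_1\le\eta<1$ and $P_m\downarrow0$. Choosing a uniform $U\in\mathscr U(0,1)$ and the pairwise disjoint events $A_m:=\{U\in(1-P_m,1-P_{m+1}]\}$ (so $P(A_m)=p_m$), set $c_m:=h(p_m)/p_m=2^{-m}\eta/p_m$ (positive and, by the above, nondecreasing in $m$), and define
$$Z_t:=\sum_{m:\,t_m=1}c_m\1_{A_m}\qquad(t\in\{0,1\}^{\mathbb N}).$$

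Two verifications remain. For membership, note $0\le Z_t\le W:=\sum_m c_m\1_{A_m}$, and $W=g(U)$ for the nondecreasing step function $g=\sum_m c_m\1_{(1-P_m,1-P_{m+1}]}$, so that $F_W^{-1}=g$; since the tail sums satisfy $\sum_{k\ge m}c_kp_k=\sum_{k\ge m}h(p_k)=2^{-m+1}\eta$ (a geometric decay) while $S_\sigma(1-P_{m+1})=h(P_{m+1})\ge h(p_{m+1})=2^{-m-1}\eta$, a short case distinction on the position of $\alpha$ shows that $|W|_{\sigma,\infty}^*$ is finite (in fact $\le4$), whence each $Z_t\in L_{\sigma,\infty}^*(P)$. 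For separation, let $t\ne t'$ and let $m$ be the least index with $t_m\ne t'_m$; the coordinates below $m$ cancel and the $A_k$ are pairwise disjoint, so $Z_t-Z_{t'}$ equals $\pm c_m$ on $A_m$, hence $|Z_t-Z_{t'}|\ge c_m\1_{A_m}$, and by the identity above
$$|Z_t-Z_{t'}|_{\sigma,\infty}^*\ \ge\ |c_m\1_{A_m}|_{\sigma,\infty}^*\ =\ c_m\,\frac{p_m}{S_\sigma(1-p_m)}\ =\ \frac{h(p_m)}{p_m}\cdot\frac{p_m}{h(p_m)}\ =\ 1 .$$
Thus $\{Z_t:t\in\{0,1\}^{\mathbb N}\}$ is an uncountable, $1$‑separated subset of $L_{\sigma,\infty}^*(P)\cong L_\sigma^1(P)^*$, so the latter is not separable.

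The only genuinely delicate point is the simultaneous calibration: the amplitudes $c_m=h(p_m)/p_m$ are forced by the requirement that each single coordinate flip cost exactly $1$ in $|\cdot|_{\sigma,\infty}^*$, while the choice $h(p_m)=2^{-m}\eta$ is what keeps $|W|_{\sigma,\infty}^*$ finite; that both demands can be met for an arbitrary — possibly wildly unbounded — distortion function is exactly where the concavity of $h$ (equivalently, the monotonicity of $\sigma$) enters.
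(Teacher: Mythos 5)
Your construction is correct, and it is genuinely different from the one in the paper. The paper first disposes of bounded $\sigma$ separately (via Proposition~\ref{bounded sigma} and the non-separability of $L^\infty(P)$) and then, for unbounded $\sigma$, takes the continuum $\{Z_\beta\}_{\beta\in[0,1]}$ of rearrangements of $\sigma(U)$ itself and proves $\Vert Z_\gamma-Z_\beta\Vert_{\sigma,1}^*\ge 1$ by a $\limsup_{\alpha\to 1}$ computation in which the unboundedness of $\frac{1}{1-\alpha}\int_\alpha^1\sigma(u)\,\mathrm{d}u$ is essential to wash out the subtracted constant $\sigma(1+\beta-\gamma)$. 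You instead work directly in the representation $L_\sigma^1(P)^*\cong L_{\sigma,\infty}^*(P)$ of Theorem~\ref{dual representation scalar case p=1} and build a Cantor-type family $\{Z_t\}_{t\in\{0,1\}^{\mathbb N}}$ of nonnegative step functions on disjoint bands accumulating at $1$, with amplitudes $c_m=h(p_m)/p_m$ calibrated so that each coordinate flip costs exactly $1$ in $|\cdot|_{\sigma,\infty}^*$ (witnessed at the single level $\alpha=1-p_m$, no limit needed) while the geometric choice $h(p_m)=2^{-m}\eta$ keeps the envelope $W$ in the space. The trade-off: the paper's argument is shorter and uses the natural norm-one extremal elements of the dual, at the price of a case split and a limiting argument; yours is uniform in $\sigma$ (bounded or not), quantitatively explicit, and relies only on the elementary identity $|c\1_E|_{\sigma,\infty}^*=cP(E)/S_\sigma(1-P(E))$ together with the concavity of $S_\sigma$. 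All the supporting facts you invoke (monotonicity of $|\cdot|_{\sigma,\infty}^*$ under $|Z_1|\le|Z_2|$, the bijectivity of $S_\sigma$ on $[u_0,1]$, positivity of $p_m$, and the existence of the uniform $U$) are available in the paper, so the argument is complete as written.
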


\begin{proof}
	It is enough to assume that $\sigma$ is unbounded, as for bounded $\sigma$ we have that $L_\sigma^1(P)$ is isomorphic to $L^1(P)$ by Proposition~\ref{bounded sigma} and its dual $L^{\infty}(P)$ is not separable.
	
	For $\beta\in[0,1]$ consider the random variables
	\[
	Z_{\beta}:=\begin{cases}
	\sigma(U) & \text{if }U\le\beta,\\
	\sigma(1+\beta-U) & \text{if }U>\beta
	\end{cases}
	\]
	for a (fixed) uniform random variable $U\in\mathscr{U}(0,1)$. Notice, that $\|Z_\beta\|_{\sigma,1}^*=1$, since $Z_\beta$ is a rearrangements of $\sigma(U)$.
	Assume that $\beta<\gamma$
	and observe that
	\begin{align*}
	Z_{\gamma}-Z_{\beta} & =\sigma(1+\gamma-U)-\sigma(1+\beta-U)\\
	& \ge\sigma(1+\gamma-U)-\sigma(1+\beta-\gamma)
	\end{align*}
	whenever $U>\gamma$. Then it holds that 
	\begin{align*}
	\left\Vert Z_{\gamma}-Z_{\beta}\right\Vert _{\sigma,1}^{*}
	&\ge\limsup_{\alpha\to1}\frac{\AVaR_{\alpha}\bigl(Z_\gamma-Z_\beta\bigr)}{\frac{1}{1-\alpha}\int_{\alpha}^{1}\sigma(u)\mathrm{d}u}\\
	&\ge\limsup_{\alpha\to1}\frac{\AVaR_{\alpha}\bigl(\sigma(1+\beta-U)-\sigma(1+\beta-\gamma)\bigr)}{\frac{1}{1-\alpha}\int_{\alpha}^{1}\sigma(u)\mathrm{d}u}.
	\end{align*}
	Now, as $\sigma$ is unbounded, the denominator is unbounded as well (indeed, we have $\frac{1}{1-\alpha}\int_{\alpha}^{1}\sigma(u)\mathrm{d}u\ge\sigma(\alpha)$)
	and hence
	\begin{align}
	\left\Vert Z_{\gamma}-Z_{\beta}\right\Vert _{\sigma,1}^{*} & \ge\limsup_{\alpha\to1}\frac{\AVaR_{\alpha}\bigl(\sigma(1+\beta-U)\bigr)-\sigma(1+\beta-\gamma)}{\frac{1}{1-\alpha}\int_{\alpha}^{1}\sigma(u)\mathrm{d}u}\nonumber \\
	& =\lim_{\alpha\to1}\frac{\frac{1}{1-\alpha}\int_{\alpha}^{1}\sigma(u)\mathrm{d}u-\sigma(1+\beta-\gamma)}{\frac{1}{1-\alpha}\int_{\alpha}^{1}\sigma(u)\mathrm{d}u}=1.\label{eq:11}
	\end{align}
	
	Suppose finally that there is a dense sequence $(D_{k})_{k\in\mathbb{N}}\subset L_\sigma^1(P)^*$. For $\beta\in[0,1]$ fixed there
	is $k\in\mathbb{N}$ such that $\left\Vert Z_{\beta}-D_{k}\right\Vert _{\sigma,1}^{*}<\frac{1}{2}$.
	But $1\le\left\Vert Z_{\beta}-Z_{\gamma}\right\Vert _{\sigma,1}^{*}\le\left\Vert Z_{\beta}-D_{k}\right\Vert _{\sigma,1}^{*}+\left\Vert D_{k}-Z_{\gamma}\right\Vert _{\sigma,1}^{*}$,
	from which follows that $\left\Vert D_{k}-Z_{\gamma}\right\Vert _{\sigma,1}^{*}>\frac{1}{2}$
	whenever $\gamma\neq\beta$. Hence only \emph{countably} many $Z_{\beta}$
	can be approximated by the sequence $(D_{k})_{k\in\mathbb{N}}$ with a distance $\left\Vert Z_{\beta}-D_{k}\right\Vert _{\sigma}^{*}<\frac{1}{2}$
	and $(D_{k})_{k\in\mathbb{N}}$ thus is not dense giving the desired contradiction. 
\end{proof}

\section{\label{sec:VectorDual}The dual space in the vector-valued case}

In this section we determine the dual space of $L_\sigma^p(P,X)$ for arbitrary Banach spaces $X$ over $\mathbb{K}\in\{\mathbb{R},\mathbb{C}\}$. We denote the space of $X$-valued simple functions on $(\Omega,\mathcal{F},P)$ by $\mathcal{S}(X)$, i.e.,
\[\mathcal{S}(X)=\{Y:\Omega\rightarrow X;\,Y(\Omega)\text{ is finite and }\forall\,x\in X:\,Y^{-1}(\{x\})\in\mathcal{F}\}.\]
Then it is straightforward to see and well-known that \[\{\varphi:\mathcal{S}(X)\rightarrow\mathbb{K};\,\varphi\text{ linear and continuous with respect to } \|\cdot\|_\infty\}\]
and
\[\{\mu:\mathcal{F}\rightarrow X^*;\,\mu\text{ vector measure of bounded variation}\}\] are isomorphic via the linear mapping
\begin{equation}\label{natural isomorphism}
\Phi:\varphi\mapsto\big(\mu_\varphi(E)(x):=\varphi(\1_E x),\,x\in X, E\in \mathcal{F}\big).
\end{equation}
For a vector measure $\mu$ we denote by $|\mu|$ its variation.

\begin{lem}\label{equality of suprema}
	For a linear mapping $\varphi:\mathcal{S}(X)\rightarrow\mathbb{K}$ we have
	\begin{align*}
	&\sup\{|\varphi(\sum_{j=1}^n\1_{E_j}x_j)|;\,E_j\in\mathcal{F}\text{ partition of }\Omega, x_j\in X, \Vert\sum_{j=1}^n\1_{E_j}x_j\Vert_{\sigma,p}\leq 1\}\\
	&=\sup\{\sum_{j=1}^n|\alpha_j|\,\Vert\mu_\varphi(E_j)\Vert;\,E_j\in\mathcal{F}\text{ partition of }\Omega, \alpha_j\in \mathbb{K}, \Vert\sum_{j=1}^n\alpha_j\1_{E_j}\Vert_{\sigma,p}\leq 1\}\\
	&=\sup\{\int_\Omega|\sum_{j=1}^n\alpha_j\1_{E_j}|\,d|\mu_\varphi|;\,E_j\in\mathcal{F}\text{ partition of }\Omega, \alpha_j\in \mathbb{K}, \Vert\sum_{j=1}^n\alpha_j\1_{E_j}\Vert_{\sigma,p}\leq 1\},
	\end{align*}
	where $\mu_\varphi$ is defined as in~\eqref{natural isomorphism}.
\end{lem}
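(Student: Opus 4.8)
The plan is to establish the cyclic chain $S_1\le S_2\le S_3\le S_1$, where $S_1,S_2,S_3$ denote the three suprema in the order written (all understood in $[0,\infty]$, since a priori $\mu_\varphi$ need not be of bounded variation). The recurring tool is the elementary monotonicity of the norm: if $Z$ is an $X$-valued strongly measurable random variable, $g$ a $\mathbb{K}$-valued random variable, and $\Vert Z(\omega)\Vert\le|g(\omega)|$ $P$-a.e., then $F_{\Vert Z\Vert}(t)\ge F_{|g|}(t)$ for all $t\ge 0$, hence $F_{\Vert Z\Vert}^{-1}\le F_{|g|}^{-1}$ on $[0,1)$, and so $\Vert Z\Vert_{\sigma,p}\le\Vert g\Vert_{\sigma,p}$ by~\eqref{alternative expression norm}. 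In particular, for $Y=\sum_{j=1}^n\1_{E_j}x_j$ the scalar random variable $\Vert Y\Vert=\sum_{j=1}^n\Vert x_j\Vert\1_{E_j}$ has the same $\Vert\cdot\Vert_{\sigma,p}$-norm as $Y$ (both depend only on $F_{\Vert Y\Vert}^{-1}$), and since in the constraints defining $S_2$ and $S_3$ only $|\alpha_j|$ occurs while $|\sum_j\alpha_j\1_{E_j}|=\sum_j|\alpha_j|\1_{E_j}$, we may assume $\alpha_j\ge 0$ there throughout.

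For $S_1\le S_2$: given a partition $(E_j)$ and $x_j\in X$ with $\Vert\sum_j\1_{E_j}x_j\Vert_{\sigma,p}\le 1$, linearity of $\varphi$ together with~\eqref{natural isomorphism} gives $\varphi(\sum_j\1_{E_j}x_j)=\sum_j\mu_\varphi(E_j)(x_j)$, whence $|\varphi(\sum_j\1_{E_j}x_j)|\le\sum_j\Vert\mu_\varphi(E_j)\Vert\,\Vert x_j\Vert$; setting $\alpha_j:=\Vert x_j\Vert$ the constraint $\Vert\sum_j\alpha_j\1_{E_j}\Vert_{\sigma,p}=\Vert\sum_j\1_{E_j}x_j\Vert_{\sigma,p}\le 1$ holds by the observation above, so the right-hand side is bounded by $S_2$. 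The inequality $S_2\le S_3$ is immediate from $\Vert\mu_\varphi(E)\Vert\le|\mu_\varphi|(E)$ and the fact that, since the $E_j$ partition $\Omega$, $\int_\Omega|\sum_j\alpha_j\1_{E_j}|\,d|\mu_\varphi|=\sum_j|\alpha_j|\,|\mu_\varphi|(E_j)$.

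The remaining, and main, step is $S_3\le S_1$. Fix a partition $(E_j)$ and scalars $\alpha_j\ge 0$ with $\Vert\sum_j\alpha_j\1_{E_j}\Vert_{\sigma,p}\le 1$, and fix $\varepsilon\in(0,1)$. For each $j$ choose, by definition of the variation, a finite measurable partition $(E_{j,k})_k$ of $E_j$ with $\sum_k\Vert\mu_\varphi(E_{j,k})\Vert\ge(1-\varepsilon)|\mu_\varphi|(E_j)$ (an arbitrarily large quantity if $|\mu_\varphi|(E_j)=\infty$ and $\alpha_j>0$); then pick $x_{j,k}\in X$ with $\Vert x_{j,k}\Vert\le\alpha_j$ and $\mu_\varphi(E_{j,k})(x_{j,k})\ge(1-\varepsilon)\,\alpha_j\Vert\mu_\varphi(E_{j,k})\Vert$, obtained by rescaling a near-norming vector of the functional $\mu_\varphi(E_{j,k})$ by a scalar of modulus $\alpha_j$ (a phase in the complex case, a sign in the real case) chosen to make the value real and nonnegative. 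Put $Y:=\sum_{j,k}\1_{E_{j,k}}x_{j,k}\in\mathcal{S}(X)$. On each $E_j$ we have $\Vert Y(\omega)\Vert\le\alpha_j=|\sum_i\alpha_i\1_{E_i}(\omega)|$, so $\Vert Y\Vert_{\sigma,p}\le\Vert\sum_i\alpha_i\1_{E_i}\Vert_{\sigma,p}\le 1$ by monotonicity, while $|\varphi(Y)|=\varphi(Y)=\sum_{j,k}\mu_\varphi(E_{j,k})(x_{j,k})\ge(1-\varepsilon)\sum_j\alpha_j\sum_k\Vert\mu_\varphi(E_{j,k})\Vert\ge(1-\varepsilon)^2\sum_j\alpha_j|\mu_\varphi|(E_j)$. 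Hence $(1-\varepsilon)^2\sum_j\alpha_j|\mu_\varphi|(E_j)\le S_1$; letting $\varepsilon\downarrow 0$ and taking the supremum over all admissible $(E_j)$, $(\alpha_j)$ yields $S_3\le S_1$, and therefore $S_1=S_2=S_3$.

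The only delicate point is this last construction, which must refine the given partition so as to approach the variation and simultaneously select near-norming vectors in $X$, all while keeping the $\Vert\cdot\Vert_{\sigma,p}$-constraint intact. The constraint survives because refining a partition and replacing scalars $\alpha_j$ by vectors of norm $\le\alpha_j$ leaves the scalar envelope $\sum_j\alpha_j\1_{E_j}$, and hence its $\Vert\cdot\Vert_{\sigma,p}$-norm, unchanged; everything else is the triangle inequality together with the defining property of the variation.
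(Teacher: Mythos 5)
Your proof is correct. It takes a genuinely different route in its organization, though it relies on the same basic ingredients as the paper. Writing $S_1$, $S_2$, $S_3$ for the three suprema in the order displayed, the paper proves the two equalities $S_1=S_2$ and $S_2=S_3$ directly and separately: for the first it writes $x_j=\alpha_j z_j$ with $\alpha_j=\Vert x_j\Vert$ and $\Vert z_j\Vert=1$, replaces $|\varphi(\sum_j\cdots)|$ by $\sum_j|\varphi(\cdots)|$ via phase changes of the $z_j$ (which leave the norm constraint untouched), and then takes the supremum over unit vectors to produce $\Vert\mu_\varphi(E_j)\Vert$; for the second it groups the indices with equal $\alpha_j$ and identifies the supremum of $\sum_{j:\alpha_i=\alpha_j}\Vert\mu_\varphi(E_j)\Vert$ over refinements with the variation $|\mu_\varphi|\bigl(\cup_{j:\alpha_i=\alpha_j}E_j\bigr)$. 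You instead close the cycle $S_1\le S_2\le S_3\le S_1$, dispatching the first two inequalities with the triangle inequality and $\Vert\mu_\varphi(E)\Vert\le|\mu_\varphi|(E)$, and concentrating all of the approximation work---refining the partition to approach the variation, selecting near-norming vectors, and rotating them by unimodular scalars---in the single step $S_3\le S_1$. The shared core is the observation that the $\Vert\cdot\Vert_{\sigma,p}$-constraint depends only on the scalar envelope $\sum_j\Vert x_j\Vert\1_{E_j}$ and therefore survives refinements and phase changes; your decomposition has the advantage that the $\varepsilon$-bookkeeping appears explicitly exactly once, whereas each of the paper's two equalities silently contains its own supremum-attainment argument, and your version also handles transparently the a priori possibility that $|\mu_\varphi|$ is not finite (all three suprema then being $+\infty$). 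Both arguments are complete and correct.
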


\begin{proof}
	For a partition $E_1,\ldots,E_n\in\mathcal{F}$ of $\Omega$, $\alpha_1,\ldots,\alpha_n\in\mathbb{K}$, and $z_1,\ldots,z_n\in X$ with $\Vert z_j\Vert =1$ we have
	\[\Vert \sum_{j=1}^n\alpha_j\1_{E_j}z_j\Vert_{\sigma,p}^p=\sup_{U\in\mathcal{U}(0,1)}\int_\Omega\sigma(U)\sum_{j=1}^n|\alpha_j|^p\1_{E_j}\,dP=\Vert\sum_{j=1}^n\alpha_j,\1_{E_j}\Vert_{\sigma,p}^p,\]
	where the norm on the left hand side is the one on $L_\sigma^p(P,X)$ while the norm on the right hand side denotes the one on $L_\sigma^p(P)$. Therefore we conclude
	\begin{align*}
	&\sup\Big\{|\varphi(\sum_{j=1}^n\1_{E_j}x_j)|;\,E_j\in\mathcal{F}\text{ partition of }\Omega, x_j\in X, \Vert\sum_{j=1}^n\1_{E_j}x_j\Vert_{\sigma,p}\leq 1\Big\}\\
	&=\sup\Big\{|\varphi(\sum_{j=1}^n\alpha_j\1_{E_j}z_j)|;\,E_j\in\mathcal{F}\text{ partition of }\Omega, \alpha_j\geq 0, z_j\in X, \Vert z_j\Vert=1\\
	&\qquad\qquad\qquad\qquad\qquad\qquad\text{and } \Vert\sum_{j=1}^n\alpha_j\1_{E_j}\Vert_{\sigma,p}\leq 1\Big\}\\
	&=\sup\Big\{\sum_{j=1}^n|\varphi(\alpha_j\1_{E_j}z_j)|;\,E_j\in\mathcal{F}\text{ partition of }\Omega, \alpha_j\geq 0, z_j\in X, \Vert z_j\Vert=1\\
	&\qquad\qquad\qquad\qquad\qquad\qquad\text{and } \Vert\sum_{j=1}^n\alpha_j\1_{E_j}\Vert_{\sigma,p}\leq 1\Big\}\\
	&=\sup\Big\{\sum_{j=1}^n|\alpha_j|\,\Vert\mu_\varphi(E_j)\Vert;\,E_j\in\mathcal{F}\text{ partition of }\Omega, \alpha_j\in\mathbb{K}, \Vert\sum_{j=1}^n\alpha_j\1_{E_j}\Vert_{\sigma,p}\leq 1\Big\},
	\end{align*}
	which gives the first equality. Using the definition of $|\mu_\varphi|$ we continue
	\begin{align*}
	&\sup\Big\{\sum_{j=1}^n|\alpha_j|\,\Vert\mu_\varphi(E_j)\Vert;\,E_j\in\mathcal{F}\text{ partition of }\Omega, \alpha_j\in\mathbb{K}, \Vert\sum_{j=1}^n\alpha_j\1_{E_j}\Vert_{\sigma,p}\leq 1\Big\}\\
	&=\sup\Big\{\sum_{{i=1}\atop{\alpha_i\text{ pairwise different}}}^n|\alpha_i|\sum_{j:\alpha_i=\alpha_j}\Vert\mu_\varphi(E_j)\Vert;\,E_j\in\mathcal{F}\text{ partition of }\Omega, \alpha_j\in\mathbb{K}\\
	&\qquad\qquad\qquad\qquad\qquad\qquad\text{and } \Vert\sum_{j=1}^n\alpha_j\1_{E_j}\Vert_{\sigma,p}\leq 1\Big\}\\
	&=\sup\Big\{\sum_{{i=1}\atop{\alpha_i\text{ pairwise different}}}^n|\alpha_i||\mu_\varphi|\big(\cup_{j:\alpha_i=\alpha_j}E_j\big);\,E_j\in\mathcal{F}\text{ partition of }\Omega, \alpha_j\in\mathbb{K}\\
	&\qquad\qquad\qquad\qquad\qquad\qquad\text{and } \Vert\sum_{j=1}^n\alpha_j\1_{E_j}\Vert_{\sigma,p}\leq 1\Big\}\\
	&=\sup\Big\{\int_\Omega|\sum_{j=1}^n\alpha_j\1_{E_j}|\,d|\mu_\varphi|;\,E_j\in\mathcal{F}\text{ partition of }\Omega, \alpha_j\in\mathbb{K}, \Vert\sum_{j=1}^n\alpha_j\1_{E_j}\Vert_{\sigma,p}\leq 1\Big\}
	\end{align*}
	which proves the second equality.
\end{proof}

\begin{defn}
	For a distortion function $\sigma$, $p\in[1,\infty)$, and a Banach space $X$ we define
	\[\mathcal{L}_{\sigma,p}\big(\mathcal{S}(X)\big):=\{\varphi:\mathcal{S}(X)\rightarrow\mathbb{K};\,\varphi\text{ linear and continuous with respect to }\Vert\cdot\Vert_{\sigma,p}\}.\]
\end{defn}

\begin{lem}\label{auxiliary dual representation}
	Let $\Phi$ be the natural isomorphism from~\eqref{natural isomorphism}. Then $\Phi(\mathcal{L}_{\sigma,p}\big(\mathcal{S}(X)\big))$ coincides with the set
	\begin{align*}
	\{\mu:\mathcal{F}\rightarrow X^*&;\,\mu\text{ is a }\sigma\text{-additive vector measure of bounded variation}\\ 
	& \text{such that }|\mu|\ll P\text{ and }\frac{d|\mu|}{dP}\in L_{\sigma,q}^*(P)\},
	\end{align*}
	and
	\[\forall\,\varphi\in\mathcal{L}_{\sigma,p}\big(\mathcal{S}(X)\big):\,\Vert\varphi\Vert=|\frac{d |\mu_\varphi|}{dP}|_{\sigma,q}^*,\]
	where $q$ is the conjugate exponent to $p$.
\end{lem}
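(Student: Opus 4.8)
The plan is to read the dual off the natural isomorphism~\eqref{natural isomorphism}, reduce the operator norm to a \emph{scalar} question via Lemma~\ref{equality of suprema}, and then invoke the scalar duality results of Section~\ref{sec:ScalarDual}. Throughout, $q$ denotes the exponent conjugate to $p$, and for $\varphi\in\mathcal{L}_{\sigma,p}\big(\mathcal{S}(X)\big)$ we write $\Vert\varphi\Vert$ for its norm as a $\Vert\cdot\Vert_{\sigma,p}$-continuous functional.

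\textbf{Step 1 (locating $\mu_\varphi$).} Let $\varphi\in\mathcal{L}_{\sigma,p}\big(\mathcal{S}(X)\big)$. Since $\Vert\cdot\Vert_{\sigma,p}\le\Vert\cdot\Vert_\infty$ on $\mathcal{S}(X)$ by Proposition~\ref{simple function dense}\,\ref{enu:p1}, $\varphi$ is in particular $\Vert\cdot\Vert_\infty$-continuous, so $\mu_\varphi$ is a well-defined vector measure of bounded variation. If $P(E)=0$ then $\1_E x=0$ in $L_\sigma^p(P,X)$, hence $\varphi(\1_E x)=0$ and $\mu_\varphi(E)=0$; applying this to measurable partitions of a $P$-null set gives $|\mu_\varphi|\ll P$. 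Writing $g:=\frac{d|\mu_\varphi|}{dP}\ge 0$, which lies in $L^1(P)$ because $|\mu_\varphi|$ is finite, dominated convergence applied to $|\mu_\varphi|(E_n)=\int_{E_n}g\,dP$ shows that $|\mu_\varphi|$, and therefore $\mu_\varphi$, is $\sigma$-additive.

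\textbf{Step 2 (the norm identity).} By Lemma~\ref{equality of suprema} the first supremum there equals $\Vert\varphi\Vert$, and using $d|\mu_\varphi|=g\,dP$ together with $g\ge0$ and $\Vert|s|\Vert_{\sigma,p}=\Vert s\Vert_{\sigma,p}$ one gets
\[
\Vert\varphi\Vert=\sup\bigl\{\mathbb{E}(|s|\,g):\ s\in\mathcal{S}(\mathbb{K}),\ \Vert s\Vert_{\sigma,p}\le1\bigr\}=\sup\bigl\{|\mathbb{E}(sg)|:\ s\in\mathcal{S}(\mathbb{K}),\ \Vert s\Vert_{\sigma,p}\le1\bigr\}<\infty .
\]
Hence $s\mapsto\mathbb{E}(sg)$ is $\Vert\cdot\Vert_{\sigma,p}$-bounded on $\mathcal{S}(\mathbb{K})$ and, since simple functions are dense (Proposition~\ref{simple function dense}\,\ref{enu:p2}), extends uniquely to some $\tilde\psi\in L_\sigma^p(P)^*$ with $\Vert\tilde\psi\Vert_{\sigma,p}^*=\Vert\varphi\Vert$. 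Because the K\"othe-dual map in Proposition~\ref{Koethe dual} is onto, $\tilde\psi=\varphi_{g'}$ for some $g'\in L_\sigma^p(P)^\times$, and testing against indicator functions forces $g'=g$; thus $g\in L_\sigma^p(P)^\times$, and Theorem~\ref{dual representation scalar case p=1} (if $p=1$) resp.\ Lemma~\ref{representation Koethe dual p>1} (if $p>1$) yield $g\in L_{\sigma,q}^*(P)$ with $\Vert\varphi\Vert=\Vert\varphi_g\Vert_{\sigma,p}^*=|g|_{\sigma,q}^*=\bigl|\tfrac{d|\mu_\varphi|}{dP}\bigr|_{\sigma,q}^*$. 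This gives the inclusion $\Phi\big(\mathcal{L}_{\sigma,p}(\mathcal{S}(X))\big)\subseteq\{\dots\}$ and the norm formula.

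\textbf{Step 3 (converse) and the main obstacle.} Conversely, let $\mu:\mathcal{F}\to X^*$ be a $\sigma$-additive vector measure of bounded variation with $|\mu|\ll P$ and $g:=\frac{d|\mu|}{dP}\in L_{\sigma,q}^*(P)$. By~\eqref{natural isomorphism} there is a $\Vert\cdot\Vert_\infty$-continuous $\varphi$ on $\mathcal{S}(X)$ with $\mu_\varphi=\mu$, and for $s=\sum_j\1_{E_j}x_j$
\[
|\varphi(s)|\le\sum_j\Vert\mu(E_j)\Vert\,\Vert x_j\Vert\le\sum_j|\mu|(E_j)\,\Vert x_j\Vert=\int_\Omega\Vert s\Vert\,d|\mu|=\mathbb{E}\bigl(\Vert s\Vert\,g\bigr).
\]
Since $\omega\mapsto\Vert s(\omega)\Vert$ is a nonnegative bounded scalar variable whose $L_\sigma^p(P)$-norm is, by definition, $\Vert s\Vert_{\sigma,p}$, Proposition~\ref{one part of duality} combined with Theorem~\ref{dual representation scalar case p=1}/Lemma~\ref{representation Koethe dual p>1} bounds the right-hand side by $|g|_{\sigma,q}^*\Vert s\Vert_{\sigma,p}$; hence $\varphi\in\mathcal{L}_{\sigma,p}\big(\mathcal{S}(X)\big)$, and its norm is $|g|_{\sigma,q}^*$ by Step~2. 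The genuinely delicate part is Step~2: one must make sure that the supremum over scalar simple functions delivered by Lemma~\ref{equality of suprema} \emph{is} the K\"othe-dual norm of $g$ — which is why I route the argument through the surjectivity of $\Phi$ in Proposition~\ref{Koethe dual} (thereby getting $g\in L_\sigma^p(P)^\times$ for free) rather than through a bare density/approximation estimate — and that the supporting measure-theoretic facts ($|\mu_\varphi|\ll P$, bounded variation, $\sigma$-additivity) are in place so that $g=\frac{d|\mu_\varphi|}{dP}$ is meaningful.
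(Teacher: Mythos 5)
Your Steps 2 and 3 are sound and follow the paper's route (reduce the operator norm to a scalar supremum via Lemma~\ref{equality of suprema}, then invoke the scalar duality results); your detour through the surjectivity of $\Phi$ in Proposition~\ref{Koethe dual} to obtain $g\in L_\sigma^p(P)^\times$, identifying $g$ by testing against indicators, is a legitimate and arguably tidier variant of the paper's direct density argument.

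There is, however, a genuine gap in Step~1, precisely at the $\sigma$-additivity claim. At the moment you write $g:=\frac{d|\mu_\varphi|}{dP}$, the set function $|\mu_\varphi|$ is only known to be \emph{finitely} additive, of bounded total variation, and vanishing on $P$-null sets. The Radon--Nikod\'ym theorem does not apply to such objects: a nonnegative, bounded, finitely additive set function on $\mathcal{F}$ that vanishes on every $P$-null set need not be countably additive and need not admit a density (purely finitely additive functionals in $(L^\infty(P))^*=ba(\mathcal{F})$ provide counterexamples). Your argument ``the density exists, hence dominated convergence yields $\sigma$-additivity'' is therefore circular --- the existence of the density presupposes the countable additivity you are trying to establish. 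This is exactly the point where the $\Vert\cdot\Vert_{\sigma,p}$-continuity of $\varphi$ (rather than the weaker $\Vert\cdot\Vert_\infty$-continuity you use to get bounded variation) must do real work: the paper proves $\sigma$-additivity of $\mu_\varphi$ directly from the quantitative estimate $\Vert\1_F x\Vert_{\sigma,p}=\Vert x\Vert\bigl(\int_{1-P(F)}^1\sigma(u)\,\mathrm{d}u\bigr)^{1/p}\to 0$ as $P(F)\to 0$, applied to the tails $F=\bigcup_{j>m}E_j$, then cites \citet[Proposition I.1.9]{Diestel1977} for the $\sigma$-additivity of $|\mu_\varphi|$, and only afterwards invokes Radon--Nikod\'ym. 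Your proof can be repaired the same way, or by upgrading your null-set statement to the $\varepsilon$--$\delta$ form $|\mu_\varphi|(E)\leq\Vert\varphi\Vert\bigl(\int_{1-P(E)}^1\sigma(u)\,\mathrm{d}u\bigr)^{1/p}$ (obtained from your bounded-variation computation restricted to partitions of $E$), since $\varepsilon$--$\delta$ absolute continuity with respect to the countably additive $P$ does force countable additivity of a finitely additive set function. Without one of these repairs the inclusion of $\Phi(\mathcal{L}_{\sigma,p}(\mathcal{S}(X)))$ into the countably additive measures is not established.
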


\begin{proof}
	For $\varphi\in\mathcal{L}_{\sigma,p}\big(\mathcal{S}(X)\big)$ it follows from the density of $\mathcal{S}(X)$ in $L_\sigma^p(P,X)$ that $\varphi$ extends to a unique element of $L_\sigma^p(P,X)^\times$ which we still denote by $\varphi$. For a pairwise disjoint sequence $(E_j)_{j\in\mathbb{N}}$ in $\mathcal{F}$ and its union $E$ it follows for arbitrary $x\in X$
	\begin{align*}
	|\mu_\varphi\big(E\big)(x)-\mu_\varphi\big(\bigcup_{j=1}^m E_j\big)(x)|&=|\mu_\varphi\big(\bigcup_{j=m+1}^\infty E_j\big)(x)|=|\varphi(\1_{\cup_{j=m+1}^\infty E_j}\,x)|\\
	&\leq \Vert\varphi\Vert_{\sigma,p}^*\,\Vert x\Vert\,\Vert \1_{\cup_{j=m+1}^\infty E_j}\Vert_{\sigma,p}\\
	&=\Vert\varphi\Vert_{\sigma,p}^*\,\Vert x\Vert\,\big(\int_{1-\sum_{j=m+1}^\infty P(E_j)}^1\sigma(u)du\big)^{1/p}.
	\end{align*}
	With the aid of Lebesgue's Dominated Convergence Theorem it follows
	\[\Vert \mu_\varphi(E)-\sum_{j=1}^m\mu_\varphi(E_j)\Vert\leq\Vert\varphi\Vert_{\sigma,p}^*\big(\int_{1-\sum_{j=m+1}^\infty P(E_j)}^1\sigma(u)du\big)^{1/p}\rightarrow_{m\rightarrow\infty}0.\]
	Thus $\Phi(\varphi)=\mu_\varphi$ is a $\sigma$-additive vector measure. Moreover, for every finite partition $E_1,\ldots,E_n\in\mathcal{F}$ of $\Omega$ and $x_1,\ldots,x_n\in X$ with $\Vert x_j\Vert\leq 1$ we have
	\begin{align*}
	\sum_{j=1}^n|\mu_\varphi\big(E_j\big)(x_j)|&=\sum_{j=1}^n|\varphi(\1_{E_j}x_j)|=\sum_{j=1}^n\sign\big(\varphi(\1_{E_j}x_j)\big)\,\varphi(\1_{E_j}x_j)\\
	&=|\varphi(\sum_{j=1}^n\sign\big(\varphi(\1_{E_j}x_j)\big)\1_{E_j}x_j)|\\
	&\leq\Vert\varphi\Vert_{\sigma,p}^*\,\Vert \sum_{j=1}^n\sign\big(\varphi(\1_{E_j}x_j)\big)\1_{E_j}x_j\Vert_{\sigma,p}\\
	&\leq\Vert\varphi\Vert_{\sigma,p}^*\,\big(\sup_{U\in\mathscr{U}(0,1)}\int_\Omega\sum_{j=1}^n\sign\big(\varphi(\1_{E_j}x_j)\big)^p\1_{E_j}\Vert x_j\Vert^p\sigma(U)dP\big)^{1/p}\\
	&\leq\Vert\varphi\Vert_{\sigma,p}^*\,\big(\sup_{U\in\mathscr{U}(0,1)}\int_\Omega\sigma(U)dP\big)^{1/p}=\Vert\varphi\Vert_{\sigma,p}^*,
	\end{align*}
	where for a complex number $\alpha$ as usual $\sign(\alpha)=\frac{\overline{\alpha}}{|\alpha|}$ in case $\alpha\neq 0$, resp.\ $\sign(0)=0$. Thus, for arbitrary $\varepsilon>0$ it follows for suitable choices $x_j^\varepsilon\in X$ from the above inequality that
	\[\sum_{j=1}^n\Vert\mu_\varphi(E_j)\Vert\leq\sum_{j=1}^n\big(|\mu_\varphi\big(E_j\big)(x_j^\varepsilon)|+\frac{\varepsilon}{n}\big)\leq\Vert\varphi\Vert_{\sigma,p}^*+\varepsilon,\]
	i.e., $\sum_{j=1}^n\Vert\mu_\varphi(E_j)\Vert\leq\Vert\varphi\Vert_{\sigma,p}^*$ which in turn implies $|\mu_\varphi|(\Omega)\leq\Vert\varphi\Vert_{\sigma,p}^*$. Hence $\Phi(\varphi)=\mu_\varphi$ is of bounded variation.
	
	Since $\mu_\varphi$ is $\sigma$-additive the same holds for $|\mu_\varphi|$ (see \citet[Proposition I.1.9]{Diestel1977}), i.e., $|\mu_\varphi|$ is a (finite) measure on $\mathcal{F}$. If $E\in\mathcal{F}$ satisfies $P(E)=0$ it follows for $x\in X$
	\[\Vert \1_E x\Vert_{\sigma,p}=\Vert x\Vert (\sup_{u\in\mathscr{U}(0,1)}\int_E\sigma(U)dP)^{1/p}=0\]
	and therefore $\Vert\mu_\varphi(E)\Vert=0$. If $E_1,\ldots,E_n\in \mathcal{F}$ is a partition of $E$ it follows $P(E_j)=0$ and thus $\sum_{j=1}^n \Vert\mu_\varphi(E_j)\Vert=0$ which implies $|\mu_\varphi|(E)=0$. By an application of the Radon-Nikod\'ym Theorem we obtain $g_\varphi\in L^1(P), g_\varphi\geq 0$ such that
	\[\forall\, E\in\mathcal{F}:\,\int_E g_\varphi\,dP=|\mu_\varphi|(E).\]
	From the fact that $\mathcal{S}(X)$ is dense in $L_\sigma^p(P,X)$ and $\mathcal{S}(\mathbb{K})$ is dense in $L_\sigma^p(P)$ it follows with Lemma~\ref{auxiliary dual representation}
	\begin{align*}
	\Vert\varphi\Vert_{\sigma,p}^*&=\sup\Big\{|\varphi(\sum_{j=1}^n\1_{E_j}x_j)|;\,E_j\in\mathcal{F}\text{ partition of }\Omega, x_j\in X, \Vert\sum_{j=1}^n\1_{E_j}x_j\Vert_{\sigma,p}\leq 1\Big\}\\
	&=\sup\Big\{\int_\Omega|\sum_{j=1}^n\alpha_j\1_{E_j}|\,d|\mu_\varphi|;\,E_j\in\mathcal{F}\text{ partition of }\Omega, \alpha_j\in \mathbb{K}\\
	&\qquad\qquad\qquad\qquad\qquad\qquad\text{and } \Vert\sum_{j=1}^n\alpha_j\1_{E_j}\Vert_{\sigma,p}\leq 1\}\\
	&=\sup\Big\{\int_\Omega|\sum_{j=1}^n\alpha_j\1_{E_j}|g_\varphi\,dP;\,E_j\in\mathcal{F}\text{ partition of }\Omega, \alpha_j\in \mathbb{K}\\
	&\qquad\qquad\qquad\qquad\qquad\qquad\text{and } \Vert\sum_{j=1}^n\alpha_j\1_{E_j}\Vert_{\sigma,p}\leq 1\Big\}\\
	&=\sup\Big\{\int_\Omega|f g_\varphi|\,dP;f\in L_\sigma^p(P), \Vert f\Vert_{\sigma,p}\leq 1\Big\},
	\end{align*}
	so that in particular $g_\varphi\in L_\sigma^p(P)^\times=L_{\sigma,q}^*(P)$ and $\Vert\varphi\Vert_{\sigma,p}^*=|g_\varphi|_{\sigma,q}^*$. Since $\varphi\in\mathcal{L}_{\sigma,p}(\mathcal{S}(X))$ was chosen arbitrarily this finally shows that $\Phi(\mathcal{L}_{\sigma,p}(\mathcal{S}(X))$ is contained in the set of $X^*$-valued, $\sigma$-additive vector measures of bounded variation such that their bounded variation measure admits a $P$-density in $L_{\sigma,q}^*(P)$.
	
	Next let $\mu$ be such a measure and set $\varphi:=\Phi^{-1}(\mu)$. We have to show that $\varphi$ belongs to $\mathcal{L}_{\sigma,p}\big(\mathcal{S}(X)\big)$. But from the density of $\mathcal{S}(\mathbb{K})$ in $L_\sigma^p(P)$ it follows immediately together with Lemma~\ref{equality of suprema} that
	\begin{align*}
	&\sup\{|\varphi(\sum_{j=1}^n\1_{E_j}x_j)|;\,E_j\in\mathcal{F}\text{ partition of }\Omega, x_j\in X, \Vert\sum_{j=1}^n\1_{E_j}x_j\Vert_{\sigma,p}\leq 1\}\\
	&=\sup\{\int_\Omega|\sum_{j=1}^n\alpha_j\1_{E_j}|\,\frac{d|\mu|}{dP}\,dP;\,E_j\in\mathcal{F}\text{ partition of }\Omega, \alpha_j\in \mathbb{K}, \Vert\sum_{j=1}^n\alpha_j\1_{E_j}\Vert_{\sigma,p}\leq 1\}\\
	&=|\frac{d|\mu|}{dP}|_{\sigma,p}^*<\infty,
	\end{align*}
	which shows $\varphi\in\mathcal{L}_{\sigma,p}\big(\mathcal{S}(X)\big)$.
\end{proof}

\begin{defn}
	Let $X$ be a Banach space, $\sigma$ a distortion function, and $p\in [1,\infty)$ with conjugate exponent $q$. Then we define
	\begin{align*}
	L_{\sigma,q}^*(P,X^*):=\{\mu:\mathcal{F}\rightarrow X^*;&\ \mu\text{ is a }\sigma\text{-additive vector measure of bounded}\\ 
	& \text{variation such that }|\mu|\ll P\text{ and }\frac{d|\mu|}{dP}\in L_{\sigma,q}^*(P)\},
	\end{align*}
	which is obviously a subspace of the space of all $X^*$-valued vector measures on $\mathcal{F}$. Moreover, for $\mu\in L_{\sigma,q}^*(P,X^*)$ we set $|\mu|_{\sigma,q}^*:=|\frac{d|\mu|}{dP}|_{\sigma,q}^*$. Then, $|\cdot|_{\sigma,q}^*$ is obviously a norm on $L_{\sigma,q}^*(P,X^*)$.
\end{defn}

\begin{rem}\label{integration with respect to a vector measure}
	For $\mu\in L_{\sigma,q}^*(P,X^*)$ it follows from Lemma~\ref{auxiliary dual representation} and the density of $\mathcal{S}(X)$ in $L_\sigma^p(P,X)$ that $\Phi^{-1}(\mu)$ can be extended in a unique way to a continuous linear functional on $L_\sigma^p(P,X)$ which we again denote by $\Phi^{-1}(\mu)$. For $Y\in L_\sigma^p(P,X)$ we also write for obvious reasons
	\[\int_\Omega Yd\mu:=\Phi^{-1}(\mu)(Y).\]
	With this notation the following theorem is an immediate consequence of Lemma~\ref{auxiliary dual representation}, Proposition~\ref{representation Koethe dual p=1}, and Lemma~\ref{representation Koethe dual p>1}.
\end{rem}

\begin{thm}\label{dual representation}
	Let $X$ be a Banach space, $\sigma$ a distortion function, and $p\in [1,\infty)$ with conjugate exponent $q$. Then $(L_{\sigma,q}^*(P,X^*),|\cdot|_{\sigma,q}^*)$ is a Banach space and the mapping
	\[\Psi:(L_{\sigma,q}^*(P,X^*),|\cdot|_{\sigma,q}^*)\mapsto (L_\sigma^p(P,X)^*,\Vert\cdot\Vert_{\sigma,p}^*),\ \mu\mapsto(Y\mapsto\int_\Omega Yd\mu)\]
	is an isometric isomorphism.
\end{thm}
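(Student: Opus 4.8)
The plan is to exhibit $\Psi$ as the composition of two norm-preserving bijections, so that the substantive analytic content is already carried by Lemma~\ref{auxiliary dual representation} together with the density statement of Proposition~\ref{simple function dense}. First I would record the standard identification of $L_\sigma^p(P,X)^*$ with $\mathcal{L}_{\sigma,p}(\mathcal{S}(X))$: since $\mathcal{S}(X)$ is dense in $L_\sigma^p(P,X)$ by Proposition~\ref{simple function dense}, the restriction map $\psi\mapsto\psi|_{\mathcal{S}(X)}$ is a linear isometric isomorphism from $(L_\sigma^p(P,X)^*,\Vert\cdot\Vert_{\sigma,p}^*)$ onto $(\mathcal{L}_{\sigma,p}(\mathcal{S}(X)),\Vert\cdot\Vert)$, its inverse being the unique continuous extension $\varphi\mapsto\widehat\varphi$ provided by the bounded-linear-extension theorem. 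For $\varphi=\Phi^{-1}(\mu)$ this extension is, by definition, exactly the functional $Y\mapsto\int_\Omega Y\,d\mu$ introduced in Remark~\ref{integration with respect to a vector measure}.

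Next I would apply Lemma~\ref{auxiliary dual representation}: the natural map $\Phi$ of~\eqref{natural isomorphism} is linear and restricts to a bijection of $\mathcal{L}_{\sigma,p}(\mathcal{S}(X))$ onto $L_{\sigma,q}^*(P,X^*)$ with $\Vert\varphi\Vert=\bigl|\frac{d|\mu_\varphi|}{dP}\bigr|_{\sigma,q}^*=|\mu_\varphi|_{\sigma,q}^*$, so that $\Phi^{-1}$ is a linear isometric isomorphism from $(L_{\sigma,q}^*(P,X^*),|\cdot|_{\sigma,q}^*)$ onto $(\mathcal{L}_{\sigma,p}(\mathcal{S}(X)),\Vert\cdot\Vert)$. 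Composing the extension isomorphism of the first paragraph with $\Phi^{-1}$ produces precisely the map $\mu\mapsto(Y\mapsto\int_\Omega Y\,d\mu)=\Psi$; being a composition of linear isometric isomorphisms, $\Psi$ is a linear isometric isomorphism onto $L_\sigma^p(P,X)^*$. Proposition~\ref{representation Koethe dual p=1} and Lemma~\ref{representation Koethe dual p>1} enter here only indirectly, through Lemma~\ref{auxiliary dual representation}, via the scalar identities $L_\sigma^p(P)^\times=L_{\sigma,q}^*(P)$ and $\Vert\varphi_Z\Vert_{\sigma,p}^*=|Z|_{\sigma,q}^*$ on which that lemma rests.

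It remains only to note that $(L_{\sigma,q}^*(P,X^*),|\cdot|_{\sigma,q}^*)$ is a Banach space. That $|\cdot|_{\sigma,q}^*$ is a norm was already observed when it was defined; positive definiteness is the only point worth spelling out, and it is clear: $|\mu|_{\sigma,q}^*=0$ forces $\frac{d|\mu|}{dP}=0$ $P$-almost everywhere, hence $|\mu|\equiv0$ and therefore $\mu\equiv0$, using $\Vert\mu(E)\Vert\le|\mu|(E)$ for all $E\in\mathcal{F}$. Completeness is then immediate, because $\Psi$ identifies $L_{\sigma,q}^*(P,X^*)$ isometrically with the dual space $L_\sigma^p(P,X)^*$, which is complete.

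The conceptual obstacle has already been surmounted in Lemma~\ref{auxiliary dual representation} (and, beneath it, in Lemma~\ref{representation Koethe dual p>1}); for the theorem itself the only thing requiring care is the bookkeeping of matching the abstract extension of $\Phi^{-1}(\mu)$ with the integral notation $\int_\Omega Y\,d\mu$, and of checking that passing from $\mathcal{S}(X)$ to all of $L_\sigma^p(P,X)$ changes neither the functional nor its norm --- both of which are immediate consequences of the density of simple functions.
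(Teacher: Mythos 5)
Your proposal is correct and follows essentially the same route as the paper, which presents Theorem~\ref{dual representation} as an immediate consequence of Lemma~\ref{auxiliary dual representation} together with the density of $\mathcal{S}(X)$ in $L_\sigma^p(P,X)$ (via Remark~\ref{integration with respect to a vector measure}); you have merely made the composition of the two isometric identifications, and the resulting completeness of $(L_{\sigma,q}^*(P,X^*),|\cdot|_{\sigma,q}^*)$, fully explicit.
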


\begin{defn}\label{vector-valued Koethe dual}
	For a Banach space $X$, $p\in [1,\infty)$ with conjugate exponent $q$ we define
	\[L_\sigma^{q\,*}(P,X):=\{Z:\Omega\rightarrow X;\, Z\text{ strongly measurable, }\Vert Z\Vert\in L_{\sigma,q}^*(P)\}\]
	and for $Z\in L_\sigma^{q\,*}(P,X)$ we set $|Z|_\sigma^{q,*}:=|\,\Vert Z\Vert\,|_{\sigma,q}^*$, where as usual we identify random variables which coincide $P$-almost everywhere. It follows easily that $L_\sigma^{q\,*}(P,X)$ is a vector space and $|\cdot|_\sigma^{q,*}$ a norm.
\end{defn}

\begin{rem}\label{vector-valued Koethe dual isometrically contained}
	For $Z\in L_\sigma^{q\,*}(P,X^*)$ it follows from $\Vert Z\Vert\in L_{\sigma,q}^*(P)\subseteq L^1(P)$ that
	\[\mu_Z:\mathcal{F}\rightarrow X^*,\mu_Z(E):=\int_E Z\,dP\]
	is a well-defined, $\sigma$-additive vector measure of bounded variation with $|\mu_Z|(E)=\int_E\Vert Z\Vert\,dP$ (see, e.g., \cite[Theorem II.2.4]{Diestel1977}). A straightforward calculation gives for $Y\in \mathcal{S}(X)$
	\[\int_\Omega Y\,d\mu_Z=\int_\Omega\langle Z(\omega),Y(\omega)\rangle\,dP(\omega).\]
	Moreover, for $Z\in L_\sigma^{q\,*}(P,X^*)$ and $Y\in L_\sigma^p(P,X)$ it follows from $\Vert Z\Vert\in L_{\sigma,q}^*(P)$ and $\Vert Y\Vert\in L_\sigma^p(P)$
	\begin{align*}
	\int_\Omega |\langle Z(\omega),Y(\omega)\rangle|\,dP(\omega)&\leq \int_\Omega \Vert Z(\omega)\Vert\,\Vert Y(\omega)\Vert\,dP(\omega)\leq|\,\Vert Z\Vert\,|_{\sigma,q}^*\,|\,\Vert Y\Vert\,|_{\sigma,p}\\
	&=|Z|_\sigma^{q\,*}\,\Vert Y\Vert_{\sigma,p}
	\end{align*}
	which implies that $\psi_Z:L_\sigma^p(P,X)\rightarrow \mathbb{K}, Y\mapsto\mathbb{E}(\langle Z,Y\rangle)$ is a well-defined continuous linear functional which coincides on the dense subspace $\mathcal{S}(X)$ with $\Psi(\mu_Z)$. Together with Theorem~\ref{dual representation} this shows that
	\[\iota:(L_\sigma^{q\,*}(P,X^*),|\cdot|_\sigma^{q,*})\rightarrow L_\sigma^p(P,X)^*,\ Z\mapsto \Big(Y\mapsto\mathbb{E}(\langle Z,Y\rangle)\Big)\]
	is an isometry.
\end{rem}

As in the case of Bochner-Lebesgue spaces we have the following result.

\begin{thm}\label{Radon Nikodym property}
	For a Banach space $X$, a distortion function $\sigma$, and $p\in [1,\infty)$ with conjugate exponent $q$ the isometry
	\[\iota:(L_\sigma^{q\,*}(P,X^*),|\cdot|_\sigma^{q,*})\rightarrow L_\sigma^p(P,X)^*,\ Z\mapsto \Big(Y\mapsto\mathbb{E}(\langle Z,Y\rangle)\Big)\]
	is an isomorphism if and only if $X^*$ has the Radon-Nikod\'ym property with respect to $(\Omega,\mathcal{F},P)$.
\end{thm}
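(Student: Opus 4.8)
The plan is to run everything through Theorem~\ref{dual representation}, which identifies $L_\sigma^p(P,X)^*$ isometrically (via $\mu\mapsto\Psi(\mu)$) with the space $L_{\sigma,q}^*(P,X^*)$ of $\sigma$-additive $X^*$-valued vector measures $\mu$ of bounded variation with $|\mu|\ll P$ and $d|\mu|/dP\in L_{\sigma,q}^*(P)$. By Remark~\ref{vector-valued Koethe dual isometrically contained} the isometry $\iota$ factors as $\iota(Z)=\Psi(\mu_Z)$ with $\mu_Z(E)=\int_E Z\,dP$; since $\Psi$ is bijective and $\iota$ is an isometry, $\iota$ is an isomorphism if and only if it is onto, and---using injectivity of $\Psi$---this happens exactly when every $\mu\in L_{\sigma,q}^*(P,X^*)$ possesses a strongly measurable, Bochner integrable density $Z$ with $\mu(E)=\int_E Z\,dP$ for all $E\in\mathcal{F}$. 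Thus the theorem reduces to a Radon--Nikod\'ym statement for the vector measures occurring in Theorem~\ref{dual representation}.

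For the ``if'' direction, assume $X^*$ has the Radon--Nikod\'ym property with respect to $(\Omega,\mathcal{F},P)$ and let $\mu\in L_{\sigma,q}^*(P,X^*)$. Then $\mu$ is of bounded variation and $P$-continuous, because $\|\mu(E)\|\le|\mu|(E)=\int_E (d|\mu|/dP)\,dP$. Hence there is $Z\in L^1(P,X^*)$ with $\mu(E)=\int_E Z\,dP$ for all $E$. The standard identity $|\mu|(E)=\int_E\|Z\|\,dP$ (cf.\ \citet[Theorem II.2.4]{Diestel1977}) forces $\|Z\|=d|\mu|/dP\in L_{\sigma,q}^*(P)$, so $Z\in L_\sigma^{q\,*}(P,X^*)$ and $\mu_Z=\mu$; therefore $\iota(Z)=\Psi(\mu_Z)=\Psi(\mu)$, and $\iota$ is onto, hence an isometric isomorphism.

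For the ``only if'' direction the point is to bridge the gap between the vector measures in Theorem~\ref{dual representation}, whose variation density lies in the subspace $L_{\sigma,q}^*(P)$ of $L^1(P)$ (in general a strictly smaller one), and the full class of $P$-continuous bounded-variation vector measures that the Radon--Nikod\'ym property asks about. Let $\mu\colon\mathcal{F}\to X^*$ be $P$-continuous of bounded variation, set $h:=d|\mu|/dP\in L^1(P)$ and $A_n:=\{h\le n\}$. First observe that any bounded $Z\in L^0(P)$ lies in $L_{\sigma,q}^*(P)$: since $L_\sigma^p(P)\subseteq L^1(P)$ by Theorem~\ref{Banach space}, boundedness of $Z$ gives $Z\in L^\infty(P)\subseteq L_\sigma^p(P)^\times=L_{\sigma,q}^*(P)$ by Proposition~\ref{representation Koethe dual p=1} (for $p=1$) resp.\ Lemma~\ref{representation Koethe dual p>1} (for $p>1$). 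Consequently the truncations $\mu_n(E):=\mu(E\cap A_n)$ are $\sigma$-additive of bounded variation with $d|\mu_n|/dP=h\,\1_{A_n}\le n$, hence $\mu_n\in L_{\sigma,q}^*(P,X^*)$. Surjectivity of $\iota$ then yields $Z_n\in L_\sigma^{q\,*}(P,X^*)$ with $\mu_{Z_n}=\mu_n$ (using injectivity of $\Psi$ once more), i.e.\ $\mu(E\cap A_n)=\int_E Z_n\,dP$ and $\|Z_n\|=h\,\1_{A_n}$ a.e. Comparing $\mu_m$ and $\mu_n$ on $A_n$ for $m\ge n$ (where $A_n\subseteq A_m$) gives $\1_{A_n}Z_m=Z_n$ a.e., so the $Z_n$ are consistent and patch together to a strongly measurable $Z$ with $Z=Z_n$ on $A_n$ and $\|Z\|=h$ a.e.\ on the full-measure set $\bigcup_n A_n$; thus $Z\in L^1(P,X^*)$. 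For each $E\in\mathcal{F}$, dominated convergence gives $|\mu|(E\setminus A_n)=\int_{E\setminus A_n}h\,dP\to 0$, hence $\mu(E\cap A_n)\to\mu(E)$, and also $\int_E Z_n\,dP=\int_E\1_{A_n}Z\,dP\to\int_E Z\,dP$, so $\mu(E)=\int_E Z\,dP$. As $\mu$ was arbitrary, $X^*$ has the Radon--Nikod\'ym property with respect to $(\Omega,\mathcal{F},P)$.

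I expect the main obstacle to be precisely this last truncation-and-patching step: the dual $L_\sigma^p(P,X)^*$ only ``sees'' vector measures whose variation density belongs to $L_{\sigma,q}^*(P)$, so surjectivity of $\iota$ first has to be applied to the bounded truncations $\mu_n$ and then bootstrapped to arbitrary measures of bounded variation; verifying the compatibility relation $\1_{A_n}Z_m=Z_n$ and the two limit passages is routine but is where the argument really lives. Everything else---the reduction via Theorem~\ref{dual representation} together with Remark~\ref{vector-valued Koethe dual isometrically contained}, and the whole ``if'' direction---is immediate from the machinery already developed.
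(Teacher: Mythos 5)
Your proposal is correct. The ``if'' direction is essentially identical to the paper's: both reduce surjectivity of $\iota$ via Theorem~\ref{dual representation} to representing each $\mu\in L_{\sigma,q}^*(P,X^*)$ by a density, apply the Radon--Nikod\'ym property of $X^*$, and use $|\mu|(E)=\int_E\Vert Z\Vert\,dP$ to see that the density lands in $L_\sigma^{q\,*}(P,X^*)$. The ``only if'' direction, however, takes a genuinely different route. The paper follows \citet[Theorem IV.1.1]{Diestel1977} and localizes in $\Omega$: by a Hahn decomposition of $kP-|\mu|$ it finds, inside every set of positive measure, a subset $B$ with $|\mu|\leq kP$ on $B$, represents $\mu(\cdot\cap B)$ using surjectivity of $\iota$, and then invokes \citet[Corollary III.2.5]{Diestel1977} to pass from these local densities to a global one. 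You instead truncate globally along the level sets $A_n=\{d|\mu|/dP\leq n\}$, observe that the truncated measures $\mu(\cdot\cap A_n)$ have bounded variation density and hence (since $L^\infty(P)\subseteq L_\sigma^p(P)^\times=L_{\sigma,q}^*(P)$) lie in $L_{\sigma,q}^*(P,X^*)$, obtain densities $Z_n$ from surjectivity of $\iota$ and injectivity of $\Psi$, and glue them by hand via the consistency relation $\1_{A_n}Z_m=Z_n$ together with two dominated-convergence passages. Both arguments reduce to the case of a bounded variation density, where membership in $L_{\sigma,q}^*(P)$ is automatic; your version trades the citation of the local-to-global corollary from \citet{Diestel1977} for an explicit, correctly executed patching argument and is therefore somewhat more self-contained, at the cost of carrying out the compatibility and limit verifications yourself. (Both proofs tacitly use that a $P$-continuous vector measure of bounded variation has countably additive variation so that $d|\mu|/dP$ exists; this is the same standing convention as in the paper.)
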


\begin{proof}
	Assume first, that $X^*$ has the Radon-Nikod\'ym property with respect to $(\Omega,\mathcal{F},P)$. By Remark~\ref{vector-valued Koethe dual isometrically contained} we only have to show surjectivity of $\iota$. For an arbitrary $\varphi\in L_\sigma^p(P,X)^*$ there is by Theorem~\ref{dual representation} a $\sigma$-additive $X^*$-valued vector measure of bounded variation such that $|\mu|\ll P$ and $\frac{d|\mu|}{dP}\in L_{\sigma,q}^*(P)$ with $\Vert \varphi\Vert=|\frac{d|\mu|}{dP}|_{\sigma,q}^*$. By the Radon-Nikod\'ym property of $X^*$ it follows that there is $Z\in L^1(P,X^*)$ such that $\mu(E)=\int_E Z\,dP$ for all $E\in\mathcal{F}$. Since $|\mu|(E)=\int_E\Vert Z\Vert\,dP$ (see e.g.\ \cite[Theorem II.2.4]{Diestel1977}) it follows that $Z\in L_\sigma^{q\,*}(P,X^*)$ and $\iota(Z)=\mu$ showing the surjectivity of $\iota$.
	
	Now, let $\iota$ be an isometric isomorphism. The proof that $X^*$ has the Radon-Nikod\'ym property is along the same lines as the proof of the corresponding implication of \cite[Theorem IV.1.1]{Diestel1977}. However, we include the proof for the reader's convenience. So, let $\mu:\mathcal{F}\rightarrow X^*$ be a $P$-continuous vector measure of bounded variation and fix $E_0\in\mathcal{F}$ such that $P(E_0)>0$. By the Hahn Decomposition Theorem applied to the signed measure $kP-|\mu|$ for large enough $k>0$ gives the existence of $B\in\mathcal{F}, B\subseteq E_0, P(B)>0$ such that $|\mu|(E)\leq kP(E)$ for all $E\in\mathcal{F}, E\subseteq B$. For $Y\in\mathcal{S}(X), Y=\sum_{j=1}^n\1_{E_j}x_j$ with pairwise disjoint $E_j\in\mathcal{F}$ and $x_j\in X$ we define
	\[\varphi(Y)=\sum_{j=1}^n\mu(E_j\cap B)(x_j).\]
	Denoting the norm in $L^1(P,X)$ as usual by $\Vert\cdot\Vert_1$ Theorem~\ref{Banach space} then gives
	\begin{equation*}
	|\varphi(Y)|\leq\sum_{j=1}^n k\Vert\mu(E_j\cap B)(x_j)\Vert\leq k\Vert Y\Vert_1\leq k\Vert Y\Vert_{\sigma,p}
	\end{equation*}
	so that the obviously linear mapping $\varphi$ on $\mathcal{S}(X)$ is continuous with respect to $\Vert\cdot\Vert_{\sigma,p}$. By Proposition~\ref{simple function dense} $\varphi$ extends (in a unique way) to an element of $L_\sigma^p(P,X)^*$ which we still denote by $\varphi$. Since $\iota$ is supposed to be surjective there is $Z\in L_\sigma^{q\,*}(P,X^*)\subseteq L^1(P,X^*)$ such that
	\[\forall\,Y\in L_\sigma^p(P,X):\,\varphi(Y)=\mathbb{E}(\langle Z,Y\rangle).\]
	Since $\mu(E\cap B)(x)=\varphi(\1_E x)=\int_E\langle Z(\omega), x\rangle\,dP(\omega)=\langle\int_E Z(\omega)\,dP(\omega),x\rangle$ for all $E\in\mathcal{F}, x\in X$ it follows that $\mu(E\cap B)=\int_E Z\,dP$.
	
	Because $E_0\in\mathcal{F}$ with $P(E_0)>0$ was chosen arbitrarily, it follows from \cite[Corollary III.2.5]{Diestel1977} that there is $Z\in L^1(P,X^*)$ such that $\mu(E)=\int_E Z\,dP$ for all $E\in\mathcal{F}$ which proves the Radon-Nikod\'ym property of $X^*$ with respect to $(\Omega,\mathcal{F},P)$.
\end{proof}

\section{Summary}

This paper introduces Banach spaces, which naturally carry risk measures
for vector-valued returns. Risk measures are continuous on these spaces,
and the spaces are as large as possible. The spaces are built based
on duality, and in this sense are natural for risk measures involving
vector-valued returns. We provide a complete characterization of the topological
dual, which essentially simplifies if the dual of the state space enjoys the Radon--Nikodým
property.

It is a key property of these spaces that the corresponding risk functional
is continuous (in fact, Lipschitz continuous) with respect to any
of the associated norms introduced, such that they all qualify as
a domain space for the risk measure.

\bibliographystyle{abbrvnat}
\bibliography{LiteraturAlois}

\begin{thebibliography}{31}
\providecommand{\natexlab}[1]{#1}
\providecommand{\url}[1]{\texttt{#1}}
\expandafter\ifx\csname urlstyle\endcsname\relax
  \providecommand{\doi}[1]{doi: #1}\else
  \providecommand{\doi}{doi: \begingroup \urlstyle{rm}\Url}\fi

\bibitem[Acerbi(2002)]{Acerbi2002a}
C.~Acerbi.
\newblock Spectral measures of risk: A coherent representation of subjective
  risk aversion.
\newblock \emph{Journal of Banking \& Finance}, 26:\penalty0 1505--1518, 2002.
\newblock \doi{10.1016/S0378-4266(02)00281-9}.

\bibitem[Ahmadi-Javid and Pichler(2016)]{AhmadiPichler}
A.~Ahmadi-Javid and A.~Pichler.
\newblock Norms and {B}anach spaces induced by the {E}ntropic
  {V}alue-at-{R}isk.
\newblock 2016.

\bibitem[Artzner et~al.(1999)Artzner, Delbaen, Eber, and Heath]{Artzner1999}
P.~Artzner, F.~Delbaen, J.-M. Eber, and D.~Heath.
\newblock Coherent {M}easures of {R}isk.
\newblock \emph{Mathematical Finance}, 9:\penalty0 203--228, 1999.
\newblock \doi{10.1111/1467-9965.00068}.

\bibitem[Bellini and Rosazza~Gianin(2012)]{Bellini2012}
F.~Bellini and E.~Rosazza~Gianin.
\newblock Haezendonck--{G}oovaerts risk measures and {O}rlicz quantiles.
\newblock \emph{Insurance: Mathematics and Economics}, 51\penalty0
  (1):\penalty0 107--114, 2012.
\newblock \doi{10.1016/j.insmatheco.2012.03.005}.

\bibitem[Burgert and R{\"u}schendorf(2006)]{BurgertRueschendorf}
C.~Burgert and L.~R{\"u}schendorf.
\newblock Consistent risk measures for portfolio vectors.
\newblock \emph{Insurance: Mathematics and Economics}, 38\penalty0
  (2):\penalty0 289--297, 2006.
\newblock ISSN 0167-6687.
\newblock \doi{10.1016/j.insmatheco.2005.08.008}.

\bibitem[Cheridito and Li(2009)]{Cheridito2009a}
P.~Cheridito and T.~Li.
\newblock Risk measures on {O}rlicz hearts.
\newblock \emph{Mathematical Finance}, 19\penalty0 (2):\penalty0 189--214,
  2009.
\newblock \doi{10.1111/j.1467-9965.2009.00364.x}.

\bibitem[Dentcheva and Ruszczy{\'n}ski(2003)]{Dentcheva2003}
D.~Dentcheva and A.~Ruszczy{\'n}ski.
\newblock Optimization with stochastic dominance constraints.
\newblock \emph{SIAM Journal on Optimization}, 14\penalty0 (2):\penalty0
  548--566, 2003.
\newblock \doi{10.1137/S1052623402420528}.

\bibitem[Dentcheva and Ruszczy{\'n}ski(2004)]{DentchevaRusz2004}
D.~Dentcheva and A.~Ruszczy{\'n}ski.
\newblock Convexification of stochastic ordering.
\newblock \emph{C. R. Acad. Bulgare Sci.}, 57\penalty0 (3):\penalty0 5--10,
  2004.

\bibitem[Dentcheva and Ruszczy{\'n}ski(2006)]{Ruszczynski2006a}
D.~Dentcheva and A.~Ruszczy{\'n}ski.
\newblock Portfolio optimization with stochastic dominance constraints.
\newblock \emph{Journal of Banking \& Finance}, 30:\penalty0 433--451, 2006.
\newblock \doi{10.1016/j.jbankfin.2005.04.024}.

\bibitem[Diestel(1975)]{Diestel1975}
J.~Diestel.
\newblock \emph{Geometry of {B}anach spaces---selected topics}.
\newblock Lecture Notes in Mathematics, Vol. 485. Springer-Verlag, Berlin-New
  York, 1975.

\bibitem[Diestel and Uhl(1977)]{Diestel1977}
J.~Diestel and J.~J. Uhl, Jr.
\newblock \emph{Vector Measures}.
\newblock Number~15 in Mathematical Surveys. American Mathematical Society,
  1977.
\newblock URL \url{http://books.google.com/books?id=EQFjD90fXWAC}.

\bibitem[Ekeland and Schachermayer(2011)]{Ekeland2011}
I.~Ekeland and W.~Schachermayer.
\newblock Law invariant risk measures on ${L}^\infty({R}^d)$.
\newblock \emph{Statistics \& Risk Modeling}, 28\penalty0 (3):\penalty0
  195--225, 2011.
\newblock \doi{10.1524/stnd.2011.1099}.

\bibitem[Ekeland et~al.(2012)Ekeland, Galichon, and
  Henry]{EkelandGalichonHenry}
I.~Ekeland, A.~Galichon, and M.~Henry.
\newblock Comonotonic measures of multivariate risk.
\newblock \emph{Mathematical Finance}, 22\penalty0 (1):\penalty0 109--132,
  2012.
\newblock ISSN 1467-9965.
\newblock \doi{10.1111/j.1467-9965.2010.00453.x}.

\bibitem[Filipovi{\'c} and Svindland(2012)]{Filipovic2012}
D.~Filipovi{\'c} and G.~Svindland.
\newblock The canonical model space for law-invariant convex risk measures is
  ${L}^1$.
\newblock \emph{Mathematical Finance}, 22\penalty0 (3):\penalty0 585--589,
  2012.
\newblock \doi{10.1111/j.1467-9965.2012.00534.x}.

\bibitem[Gradshteyn and Ryzhik(2000)]{Gradsteyn2007}
I.~S. Gradshteyn and J.~M. Ryzhik.
\newblock \emph{Table of integrals, series, and products.}
\newblock San Diego, CA: Academic Press. xlvii, 1163 p., 7th edition, 2000.

\bibitem[Halperin(1953)]{Halperin}
I.~Halperin.
\newblock Function spaces.
\newblock \emph{Canadian Journal of Mathematics}, 5:\penalty0 273--288, 1953.
\newblock \doi{10.4153/CJM-1953-031-3}.

\bibitem[Hardy et~al.(1988)Hardy, Littlewood, and P{\'o}lya]{Hardy1988}
G.~H. Hardy, J.~E. Littlewood, and G.~P{\'o}lya.
\newblock \emph{Inequalities}.
\newblock Cambridge University Press, 1988.

\bibitem[Kupper and Svindland(2011)]{SvindlandKupper2011}
M.~Kupper and G.~Svindland.
\newblock Dual reopresentation of monotone convex functions on ${L}^0$.
\newblock \emph{Proceedings of the American Mathematical Society}, 139\penalty0
  (11):\penalty0 4073--4086, 11 2011.
\newblock \doi{10.1090/S0002-9939-2011-10835-9}.

\bibitem[Kusuoka(2001)]{Kusuoka}
S.~Kusuoka.
\newblock On law invariant coherent risk measures.
\newblock In \emph{Advances in mathematical economics}, volume~3, chapter~4,
  pages 83--95. Springer, 2001.
\newblock \doi{10.1007/978-4-431-67891-5}.

\bibitem[Lorentz(1951)]{LorentzSpaces}
G.~G. Lorentz.
\newblock On the theory of spaces {$\Lambda$}.
\newblock \emph{Pacific Journal of Mathematics}, 1\penalty0 (3):\penalty0
  411--429, 1951.

\bibitem[Lorentz(1986)]{Lorentz}
G.~G. Lorentz.
\newblock \emph{Bernstein Polynomials}.
\newblock Chelsea Publ., 1986.

\bibitem[McNeil et~al.(2005)McNeil, Frey, and Embrechts]{McNeil2005}
A.~J. McNeil, R.~Frey, and P.~Embrechts.
\newblock \emph{Quantitative Risk Management}.
\newblock Princeton University Press, 2005.

\bibitem[Ogryczak and Ruszczy{\'n}ski(2002)]{RuszOgryczak}
W.~Ogryczak and A.~Ruszczy{\'n}ski.
\newblock Dual stochastic dominance and related mean-risk models.
\newblock \emph{SIAM Journal on Optimization}, 13\penalty0 (1):\penalty0
  60--78, 2002.
\newblock \doi{10.1137/S1052623400375075}.

\bibitem[Pflug(2000)]{Pflug2000}
G.~{\relax Ch}. Pflug.
\newblock Some remarks on the {V}alue-at-{R}isk and the {C}onditional
  {V}alue-at-{R}isk.
\newblock In S.~Uryasev, editor, \emph{Probabilistic Constrained Optimization},
  volume~49, chapter~15, pages 272--281. Springer US, 2000.
\newblock \doi{10.1007/978-1-4757-3150-7}.

\bibitem[Pflug and R{\"o}misch(2007)]{PflugRomisch2007}
G.~{\relax Ch}. Pflug and W.~R{\"o}misch.
\newblock \emph{Modeling, Measuring and Managing Risk}.
\newblock World Scientific, River Edge, NJ, 2007.
\newblock \doi{10.1142/9789812708724}.

\bibitem[Pichler(2013)]{Pichler2013a}
A.~Pichler.
\newblock The natural {B}anach space for version independent risk measures.
\newblock \emph{Insurance: Mathematics and Economics}, 53\penalty0
  (2):\penalty0 405--415, 2013.
\newblock \doi{10.1016/j.insmatheco.2013.07.005}.

\bibitem[Pichler(2014)]{PichlerAmbiguity}
A.~Pichler.
\newblock Insurance pricing under ambiguity.
\newblock \emph{European Actuarial Journal}, 4\penalty0 (2):\penalty0 335--364,
  2014.
\newblock \doi{10.1007/s13385-014-0099-7}.

\bibitem[Pichler(2017)]{Pichler2017}
A.~Pichler.
\newblock A quantitative comparison of risk measures.
\newblock \emph{Annals of Operations Research}, 2017.
\newblock \doi{10.1007/s10479-017-2397-3}.

\bibitem[R{\"u}schendorf(2009)]{Rueschendorf2006}
L.~R{\"u}schendorf.
\newblock Law invariant convex risk measures for portfolio vectors.
\newblock \emph{Statistics \& Decisions}, pages 97--108, 2009.
\newblock \doi{10.1524/stnd.2006.24.1.97}.

\bibitem[Svindland(2009)]{Svindland2009}
G.~Svindland.
\newblock Subgradients of law-invariant convex risk measures on ${L}^1$.
\newblock \emph{Statistics \& Decisions}, 27:\penalty0 169--199, 2009.
\newblock \doi{10.1524/stnd.2009.1040}.

\bibitem[Williams(1991)]{Williams1991}
D.~Williams.
\newblock \emph{Probability with Martingales}.
\newblock Cambridge University Press, Cambridge, 1991.
\newblock URL \url{http://books.google.com/books?id=RnOJeRpk0SEC}.

\end{thebibliography}

\end{document}